\newtheorem{theorem}{Theorem}[section]
\newtheorem{lemma}[theorem]{Lemma}
\newtheorem{corollary}[theorem]{Corollary}
\newtheorem{proposition}[theorem]{Proposition}
\newtheorem{conjecture}[theorem]{Conjecture}
\newtheorem{remark}[theorem]{Remark}
\numberwithin{equation}{section}
\newtheorem{definition}[theorem]{Definition}
\DeclareMathOperator{\diam}{diam}
\DeclareMathOperator{\id}{id}
\DeclareMathOperator{\Lip}{Lip}
\DeclareMathOperator{\Div}{div}
\DeclareMathOperator{\tr}{tr}
\DeclareMathOperator{\dist}{dist}
\definecolor{amber(sae/ece)}{rgb}{1.0, 0.49, 0.0}
\newfont{\rsfsten}{rsfs10 scaled 1200}
\DeclareMathOperator{\Ric}{Ric}
\DeclareMathOperator{\vol}{Vol}
\DeclareMathOperator{\Osc}{Osc}
\begin{document}

\title[Positive mass theorem with an incompressible condition]{Incompressible hypersurface, positive scalar curvature and positive mass theorem}
\author{Jie Chen}
\address[Jie Chen]{ Institute of Applied Physics and Computational Mathematics, Beijing 100088, P.R. China.}
\email{jiechern@163.com}
\thanks{}
\author{Peng Liu}
\address [Peng Liu] {Key Laboratory of Pure and Applied Mathematics, School of Mathematical Sciences, Peking University, Beijing, 100871, P.\ R.\ China}
\email{1801110011@pku.edu.cn}
\author{Yuguang Shi}
\address [Yuguang Shi] {Key Laboratory of Pure and Applied Mathematics, School of Mathematical Sciences, Peking University, Beijing, 100871, P.\ R.\ China}
\email{ygshi@math.pku.edu.cn}
\thanks{Y.Shi is partially supported by National Key R$\&$D Program of China 2020YFA0712800,  and NSFC 11731001.}

\author{Jintian Zhu}
\address [Jintian Zhu]   {Beijing international center for mathematical research, Peking University, Beijing, 100871, P. R. China}
\email{zhujt@pku.edu.cn}
\email{zhujintian@bicmr.pku.edu.cn}
\thanks{J. Zhu is supported by the China post-doctoral grant BX2021013.} 
\date{\today}
\begin{abstract}
 In this paper, we prove for $n\leq 7$ that if a differentiable $n$-manifold contains a relatively incompressible essential hypersurface in some class $\mathcal C_{deg}$, then it admits no complete metric with positive scalar curvature. Based on this result, we show for $n\leq 7$ that surgeries between orientable $n$-manifolds and $n$-torus along incompressible sub-torus with codimension no less than $2$ still preserve the obstruction for complete metrics with positive scalar curvature. As an application, we establish positive mass theorem with incompressible conditions for asymptotically flat/conical manifolds with flat fiber $F$ (including ALF and ALG manifolds), which can be viewed as a generalization of the classical positive mass theorem from \cite{SY79PMT} and \cite{SY2017}. Finally, we investigate Gromov's fill-in problem and bound the total mean curvature for nonnegative scalar curvature fill-ins of flat $2$-toruses (an optimal bound is obtained for product $2$-toruses). This confirms the validity of Mantoulidis-Miao's definition of generalized Brown-York mass in \cite{MM2017} for flat $2$-toruses.
\end{abstract}
\subjclass[2020]{Primary 53C21, 53C24}

\maketitle
\section{Introduction}
\subsection{Essential manifolds, incompressible hypersurface and positive scalar curvature} Recall that a closed manifold is called {\it aspherical} if its universal covering is contractible, i.e. homotopy equivalent to a point. The {\it aspherical conjecture} \cite{Rosenberg1983} asserts that any closed aspherical manifold cannot admit smooth metrics with positive scalar curvature. When the dimension is two, this conjecture is just a direct consequence from the Gauss-Bonnet formula and the classfication for closed surfaces. In higher dimensions, this conjecture becomes more difficult and we briefly introduce the researches on that. In 1983, Gromov and Lawson \cite{GL83} first verified this conjecture in three dimensional case. Shortly after that, Schoen and Yau wrote a short survey \cite{SY87} with a sketchy proof for the aspherical conjecture in dimension four. But until recently, Gromov \cite{Gromov2020} as well as Chodosh and Li \cite{CL2020} verified this conjecture up to dimension five independently. All of these works actually lead to the consideration for the following enhanced aspherical conjecture. {\it That is, any closed essential manifold admits no smooth metric with positive scalar curvature, where essential manifolds here mean those admitting a non-zero degree map to aspherical ones.} Just recently, the enhanced aspherical conjecture up to dimension five was proven in detail by Chodosh, Li and Liokumovich in \cite{CLL2021} although this seems to be already asserted by Gromov in his four lectures \cite{Gromov2021}.

Instead of the manifold itself being essential, the existence of an essential incompressible  closed hypersurface also provides a topological obstruction for positive scalar curvature metrics. Here an incompressible hypersurface in an $n$-manifold $M^n$ simply means a continuous map $i:\Sigma\to M$ such that $\Sigma$ is an essential closed $(n-1)$-manifold and the induced map $i_*:\pi_1(\Sigma) \to \pi_1(M)$ between fundamental groups is injective. This kind of result was first proven by Schoen and Yau in their papers \cite{SY1979} and \cite{SY1982}, where they showed that if an orientable complete $3$-manifold contains an orientable incompressible closed surface with positive genus, then it admits no smooth metric with positive scalar curvature. Later in \cite{GL83}, Gromov and Lawson made the following generalization: if $M$ is a compact manifold with enlargeable\footnote{A manifold $M^n$ is called enlargeable if for any $\epsilon>0$ we can find a covering of $M$ which is spin and admits a non-zero degree map to $\mathbf S^n(1)$ with Lipschitz contant less than $\epsilon$. Typical examples are Cantan-Hardmard manifolds. The use of this requirement here is the same as our class $\mathcal C_{deg}$ defined next.} essential boundary such that the inclusion map $\pi_1(\partial M)\to \pi_1(M)$ is injective, then it cannot admit any smooth metric with positive scalar curvature and mean convex boundary. We point out that one can even obtain a stronger conclusion from their proof. That is, if $M^n$ is a closed $n$-manifold containing an incompressible enlargeable essential closed hypersurface, then it cannot admit any smooth metric with positive scalar curvature. Based on the latter statement the original result by Gromov and Lawson follows easily from a doubling trick.

In this paper, we would like to strengthen Gromov-Lawson's result along two directions. First we are able to drop the requirement on the closeness of underlying manifold with the help of soap bubble (also called $\mu$-bubble) method. Recently, soap bubbles were used in various situations to establish results on non-compact manifolds and the audience can refer to \cite{Zhu2021}, \cite{Zhu2020}, \cite{Thomas2020}, \cite{LUY2021} and \cite{CL2020}. Second we can relax the incompressible condition to some {\it relatively incompressible condition} that allows part of the fundamental group mapped to zero. As we shall see later, the first improvement enables us to establish positive mass theorems with an incompressible condition and arbitrary ends.

Before the statement of our main theorem, let us introduce some notions for convenience. In the following, we denote $\mathcal C_{deg}$ to be the collection of those closed orientable aspherical manifolds $M$ satisfying the following property: any essential closed manifold over\footnote{A closed essential manifold over $M$ means that it admits a non-zero degree map to $M$.} $M$ admits no positive scalar curvature metric. Once the enhanced aspherical conjecture is proven, the class $\mathcal C_{deg}$ is nothing but all aspherical closed manifolds.  At this stage, the class $\mathcal C_{deg}$ is known to include the following manifolds:
\begin{itemize}
\item[(a)] closed aspherical manifolds with dimension no greater than five \cite{CLL2021};
\item[(b)] the $n$-torus $T^n$ with dimension $n\leq 8$ \cite{SY2017} (No dimensional restriction is needed in \cite{SY2017}, but we focus on the case $n\leq 8$ for safety);
\item[(c)] closed Cantan-Hardmard $n$-manifolds with dimension $n\leq 8$ \cite{Gromov2018}.
\end{itemize}
We point out that the dimensional restriction $n\leq 8$ comes from the regularity theory for minimizing currents in geometric measure theory. It is well-knwon that no singularities occur when the ambient manifold has dimension no greater than $7$ and singularities can be perturbed away in eight dimensional manifolds through Nathan Smale's work in \cite{Nathan1993}.

With the class $\mathcal C_{deg}$ we can state our theorem as following
\begin{theorem}\label{Thm: main 1}
Let $\Sigma_0$ be an orientable closed aspherical manifold in $\mathcal C_{deg}$, and $\Sigma$ be an orientable essential closed manifold over $\Sigma_0$ associated with a non-zero degree map $f:\Sigma\to \Sigma_0$. Assume that $M$ is a differentiable $n$-manifold ($n\leq 7$, compact or non-compact) with an orientable essential closed hypersurface $i:\Sigma\to M$ such that $\ker i_*\subset \ker f_*$ in $\pi_1(\Sigma)$, where $i_*$ and $f_*$ are the corresponding induced maps of $i$ and $f$ between fundamental groups respectively. Then $M$ admits no complete positive scalar curvature metric. Moreover, if $g$ is a complete smooth metric on $M$ with nonnegative scalar curvature, then $(M,g)$ is flat.
\end{theorem}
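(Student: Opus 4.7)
I would argue by contradiction, assuming $M$ admits a complete metric $g$ of positive scalar curvature. The strategy is to extract a compact essential closed hypersurface $\hat\Sigma$ still mapping to $\Sigma_0$ with nonzero degree but now equipped with a metric of positive scalar curvature; this would contradict the defining property of $\mathcal C_{deg}$. The argument follows the soap-bubble ($\mu$-bubble) philosophy alluded to in the introduction, adapted so as to accommodate the relatively incompressible condition $\ker i_*\subset\ker f_*$.

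\textbf{Topological preparation.} First pass to the covering $p:\bar M\to M$ corresponding to $i_*(\pi_1(\Sigma))\le\pi_1(M)$, so that $i$ lifts to $\bar i:\Sigma\to\bar M$ with $\bar i_*$ surjective. By orientability, $\bar i(\Sigma)$ is a two-sided hypersurface. Because $\Sigma_0$ is aspherical and $\ker i_*\subset\ker f_*$, the homomorphism $f_*$ descends to $\phi:\pi_1(\bar M)\to\pi_1(\Sigma_0)$ with $\phi\circ\bar i_*=f_*$; asphericity of $\Sigma_0$ then promotes $\phi$ to a continuous classifying map $F:\bar M\to\Sigma_0$ with $F\circ\bar i\simeq f$. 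After passing to a further cover if necessary, I may assume $\bar i(\Sigma)$ separates its tubular neighborhood in $\bar M$, distinguishing a ``$+$'' side from a ``$-$'' side.

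\textbf{The warped $\mu$-bubble and its topology.} Using the completeness of $g$, I would choose a large compact domain $\Omega\subset\bar M$ with $\bar i(\Sigma)$ in its interior and $\partial\Omega=\partial_+\Omega\sqcup\partial_-\Omega$ on the two sides; pick a smooth warping function $h$ on $\mathrm{int}(\Omega)$ blowing up to $\pm\infty$ along $\partial_\pm\Omega$ (calibrated against the signed distance to $\bar i(\Sigma)$) and minimize the warped $\mu$-bubble functional
\[
\mathcal A(E)=\mathcal H^{n-1}(\partial^*E\cap\mathrm{int}\,\Omega)-\int_\Omega(\chi_E-\chi_{E_0})\,h\,d\vol_g
\]
over Caccioppoli sets $E$ agreeing with a reference set $E_0$ (the ``$+$''-side of $\bar i(\Sigma)$ in $\Omega$) near $\partial\Omega$. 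For $n\le 7$, a smooth interior minimizer $\hat\Sigma=\partial^*\hat E$ exists. The symmetric difference $\hat W=\hat E\,\triangle\,E_0$ is a compact cobordism in $\Omega$ whose oriented boundary is $\hat\Sigma-\bar i(\Sigma)$, so $(F|_{\hat\Sigma})_*[\hat\Sigma]=f_*[\Sigma]\ne 0$ in $H_{n-1}(\Sigma_0)$. Hence $F|_{\hat\Sigma}:\hat\Sigma\to\Sigma_0$ has nonzero degree and $\hat\Sigma$ is essential over $\Sigma_0$.

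\textbf{Scalar curvature, rigidity, and the hard point.} The $\mu$-bubble stability inequality for $\hat\Sigma$, combined with $R_g>0$ and the chosen $h$, yields a positive-definite quadratic form in test functions; via the standard conformal/warped-product reduction (conformal Laplacian for $n-1\ge3$, Gauss--Bonnet for $n-1=2$) this produces a metric of positive scalar curvature on $\hat\Sigma$. Since $\Sigma_0\in\mathcal C_{deg}$, essential closed manifolds over $\Sigma_0$ carry no such metric, a contradiction. For the rigidity statement, a standard conformal perturbation of a non-flat metric with $R_g\ge0$ (solving a linear eigenvalue equation for the conformal Laplacian and smoothing via cutoffs) gives a nearby complete metric on $M$ with $R>0$, reducing to the first assertion. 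The main obstacle is the soap-bubble step: one must choose $\Omega$ and $h$ simultaneously so that (i) the barrier on $\partial\Omega$ keeps the minimizer strictly interior and the regularity of $\hat\Sigma$ is preserved, and (ii) the stability inequality has the right sign to descend positive scalar curvature to $\hat\Sigma$; both conditions are delicate on non-compact ambients and must be balanced via an exhaustion adapted to the distance function from $\bar i(\Sigma)$.
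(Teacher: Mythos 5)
Your overall strategy for the non-existence part coincides with the paper's: pass to the cover associated with $i_*(\pi_1(\Sigma))$, use $\ker i_*\subset\ker f_*$ and asphericity of $\Sigma_0$ to build a classifying map $F:\bar M\to\Sigma_0$ extending $f$ up to homotopy, run a warped $\mu$-bubble minimization in a band around the lifted hypersurface, and transfer the nonzero degree to the minimizer to contradict $\Sigma_0\in\mathcal C_{deg}$. Two points, however, are genuine gaps rather than routine details. First, in the theorem $i:\Sigma\to M$ is only a continuous map, not an embedding, so ``$\bar i(\Sigma)$ is a two-sided hypersurface'' with a tubular neighborhood does not make sense as written; one must first replace $\Sigma$ by an \emph{embedded} orientable hypersurface in the cover representing the image of $[\Sigma]$ in $H_{n-1}(\bar M)$ (the paper does this via a Poincar\'e-duality/regular-value argument, Lemma \ref{Lem: realize}), and one of its components still carries nonzero degree onto $\Sigma_0$. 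Relatedly, what the $\mu$-bubble construction needs is not that the hypersurface separates its tubular neighborhood but that it separates the whole cover $\bar M$ into two \emph{unbounded} components, so that the signed distance function is a proper surjection onto $\mathbf R$; this is proved by an intersection-number argument exploiting the surjectivity of $\bar i_*$, and ``passing to a further cover'' does not substitute for it.

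Second, your rigidity argument is incomplete. A conformal (Kazdan-type) perturbation of a complete metric with $R_g\ge 0$ produces a metric with $R>0$ only when $g$ fails to be \emph{Ricci-flat}; so the reduction to the first assertion yields Ricci-flatness, not flatness. Closing the gap requires more: since the cover $\tilde M$ has two ends, the Cheeger--Gromoll splitting theorem gives $\tilde M=\tilde\Sigma'\times\mathbf R$ with $\tilde\Sigma'$ a closed Ricci-flat manifold essential over $\Sigma_0$; by the structure theory of compact Ricci-flat manifolds one may assume (after a further cover) $\tilde\Sigma'=\tilde\Sigma_1'\times T^k$ with $\tilde\Sigma_1'$ simply connected, and then the asphericity of $\Sigma_0$ (its homotopy type is determined by its torsion-free fundamental group, which must be $\mathbf Z^{n-1}$ here) forces $k=n-1$, hence flatness. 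Without this chain of arguments the ``Moreover'' clause of the theorem is not established.
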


\subsection{Generalized connected sum along submanifolds}
In the second part of this paper, we are going to make a discussion on the relationship between the generalized connected sum and positive scalar curvature. First let us recall the definition of the generalized connected sum considered in \cite{SY79} as well as \cite{GL80} . Let $\Sigma^k$ be a closed $k$-manifold and define the class
\begin{equation*}
\mathcal M^n_\Sigma=\left\{(M,i)\left|
\begin{array}{c}
\text{$M$ is a differentiable $n$-manifold and $i:\Sigma\to M$}\\
\text{is an embedding with trivial normal bundle}
\end{array}\right.\right\}.
\end{equation*}
For convenience, a pair $(M,i)$ in $\mathcal M_{\Sigma}^n$ will be called a marked manifold in the following. We emphasize that there is no requirement on the compactness of marked manifolds in the definition of $\mathcal M^n_\Sigma$. Given two marked manifolds $(M_1,i_1)$ and $(M_2,i_2)$ in $\mathcal M_{\Sigma}^n$, the generalized connected sum
$$(M_1,i_1)\#_{\Sigma}(M_2,i_2)$$
is defined to be the manifold obtained from the surgery as follows. Take a tubular neighborhood $U_l$ of $i_l(\Sigma)$ in each $M_l$ for $l=1,2$. Clearly $\partial U_1$ and $\partial U_2$ are isomorphic sphere bundles over $\Sigma$ (actually over $i_1(\Sigma)$ and $i_2(\Sigma)$). Pick up $\Phi$ to be a {\it fiber preserving diffeomorphism} between $\partial U_1$ and $\partial U_2$. Then $(M_1,i_1)\#_{\Sigma}(M_2,i_2)$ is defined to be the gluing space
$$
\left(M_1-U_1\right)\sqcup_\Phi \left(M_2-U_2\right).
$$
When $\Sigma$ is a single point, it is clear that the generalized connected sum reduces to the classical one.

The researches on the relationship between the generalized connected sum and positive scalar curvature mainly focus on two opposite directions: the generalized connected sum preserving admission or obstruction for positive scalar curvature. Along the first direction, Schoen and Yau \cite{SY79} as well as Gromov and Lawson \cite{GL80} proved that: given any closed manifold $\Sigma^k$ with $k+3\leq n$, if $(M_1,i_1)$ and $(M_2,i_2)$ are two marked manifolds in $\mathcal M^n_{\Sigma}$ admitting complete metrics with positive scalar curvature, the generalized connected sum $(M_1,i_1)\#_{\Sigma}(M_2,i_2)$ also admits a complete positive scalar curvature metric. On the other hand, based on the work of Schoen and Yau \cite{SY79}\cite{SY2017} as well as the soap bubble method, Chodosh and Li \cite{CL2020} proved that $(T^n,i)\#_{\Sigma}(M_2,i_2)$ with $n\leq 7$ cannot admit any complete metric with positive scalar curvature in the case when $\Sigma$ is a single point.

Here we would like to further generalize Chodosh-Li's work and this serves as a preparation for our positive mass theorems with an incompressible conditions and more general asymptotics. For our purpose, let us limit our attention to the following special case. Namely we take $\Sigma$ to be the $k$-torus $T^k$ and $(T^n,i)$ to be the $n$-torus associated with a linear embedding $i:T^k\to T^n$. Clearly the marked manifold $(T^n,i)$ is an element in $\mathcal M^n_{T^k}$ and we can consider the generalized connected sum $(T^n,i)\#_{T^k}(M_2,i_2)$ for any $(M_2,i_2)$ in $\mathcal M^n_{T^k}$.

Given any marked manifold $(M,i)$ in $\mathcal M^n_{\Sigma}$, we say that $(M,i)$ satisfies the incompressible condition if the inclusion map $i_*:\pi_1(\Sigma)\to \pi_1(M)$ is injective. When $\Sigma$ is $\mathbf S^1$, $(M,i)$ is said to satisfy the homotopially non-trivial condition if the map $i:\mathbf S^1\to M$ is not homotopic to a point.

As an application of our main Theorem \ref{Thm: main 1}, we can establish
\begin{proposition}\label{Prop: main 2}
Let $n\leq 7$ and $k\leq n-2$. If $(M_2,i_2)$ is a marked $n$-manifold in $\mathcal M^n_{T^k}$ satisfying the incompressible condition, then the generalized connected sum $(T^n,i)\#_{T^k}(M_2,i_2)$  admits no complete metric with positive scalar curvature. Moreover, if $g$ is a complete metric on $(T^n,i)\#_{T^k}(M_2,i_2)$ with nonnegative scalar curvature, then the metric $g$ must be flat.
\end{proposition}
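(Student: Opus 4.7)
The plan is to apply Theorem~\ref{Thm: main 1} to the connected sum $M := (T^n, i)\#_{T^k}(M_2, i_2)$ with an $(n-1)$-torus hypersurface $\Sigma$ coming from the $T^n$-piece. Writing $T^n = T^k \times T^{n-k}$ so that $i(T^k) = T^k \times \{0\}$, I would choose a tubular neighborhood $U_1 \cong T^k \times D^{n-k}$ of $i(T^k)$ together with a linear slice $T^{n-k-1} \subset T^{n-k}$ avoiding a neighborhood of the origin, then set $\Sigma := T^k \times T^{n-k-1} \subset T^n - U_1 \subset M$. With $\Sigma_0 = T^{n-1}$ (which lies in $\mathcal C_{deg}$ by item (b) since $n-1 \leq 6$) and $f = \id_{T^{n-1}}$, the hypothesis $\ker i_* \subset \ker f_*$ collapses to the single claim that $i_*:\pi_1(\Sigma) \to \pi_1(M)$ is injective. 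Everything else, including the flatness statement under nonnegative scalar curvature, is then immediate from Theorem~\ref{Thm: main 1}.

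To compute $\pi_1(M)$ I would apply van Kampen to $M = (T^n - U_1)\cup_{\partial U}(M_2 - U_2)$, where $\partial U \cong T^k \times S^{n-k-1}$. In the generic case $n - k \geq 3$, both amalgamation edge maps are injective: since a codimension-$\geq 3$ submanifold can be removed without changing $\pi_1$, the $M_2$-side edge map $\pi_1(\partial U) = \mathbb{Z}^k \hookrightarrow \pi_1(M_2 - U_2) = \pi_1(M_2)$ is just the incompressibility map $(i_2)_*$. The Bass-Serre normal form theorem then produces an inclusion $\pi_1(T^n - U_1) = \mathbb{Z}^n \hookrightarrow \pi_1(M)$, and since $\pi_1(\Sigma) = \mathbb{Z}^{n-1}$ sits as a direct summand of $\mathbb{Z}^n$, injectivity of $i_*$ follows at once.

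The main obstacle lies in the codimension-two case $k = n-2$, where $\pi_1(T^n - U_1) = \mathbb{Z}^{n-2} \times F_2$ with $F_2 = \pi_1(T^2 - D^2) = \langle a, b\rangle$. Here the meridian $m$ of $\partial U_2 = T^{n-2} \times S^1$ can become null in $M_2 - U_2$, so the edge map $\psi$ need not be injective. My plan is first to push forward to $\pi_1(M_2)$ (where $m$ dies) and use incompressibility of $i_2$ to force $\ker \psi \subset \langle m\rangle$, say $\ker \psi = \langle q m\rangle$ for some $q \geq 0$. Then I would reduce to an injective-edge amalgamation by replacing $F_2$ with $F_2/\langle\langle [a,b]^q\rangle\rangle$ and $\pi_1(\partial U)$ with $\mathbb{Z}^{n-2}\times \mathbb{Z}/q$; Magnus's torsion theorem for one-relator groups guarantees that $[a,b]$ has order exactly $q$ in the quotient, so the reduced amalgamation satisfies the Bass-Serre hypothesis. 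The kernel of $\pi_1(T^n - U_1) \to \pi_1(M)$ then equals $\{1\}\times \langle\langle[a,b]^q\rangle\rangle_{F_2}$, which intersects $\pi_1(\Sigma) = \mathbb{Z}^{n-2}\times \langle b\rangle$ only at the identity: since $[a,b]^q$ is a product of commutators it vanishes in $F_2^{\mathrm{ab}} = \mathbb{Z}^2$, so $b^r$ represents a nonzero class there for every $r \neq 0$.

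Once injectivity of $i_*$ has been established in both cases, Theorem~\ref{Thm: main 1} directly yields the nonexistence of complete PSC metrics on $M$ together with the flatness of any complete nonnegative scalar curvature metric, completing the proof of Proposition~\ref{Prop: main 2}.
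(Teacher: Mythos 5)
Your proposal is correct, and it follows the paper's overall strategy — the same hypersurface $\Sigma=T^k\times T^{n-k-1}\subset T^k\times(T^{n-k}-B)$, the same target $\Sigma_0=T^{n-1}\in\mathcal C_{deg}$ with $f=\id$, and the same reduction of everything to injectivity of $i_*:\pi_1(\Sigma)\to\pi_1(M)$ before invoking Theorem \ref{Thm: main 1} — but it proves the incompressibility of $\Sigma$ by a genuinely different method. The paper first uses intersection numbers with the dual tori $T^k\times T^{n-k-1}_i$ to reduce a null-homotopic loop to the $T^k$-factor, then pushes it to $\partial(T^k\times B)$ and appeals to the fiber-preserving gluing map, the incompressibility of $i_2$, and the covering-space machinery of Lemma \ref{Lem: no contraction extension} and Lemma \ref{Lem: lifting property example} (the ``lifting property'') to rule out contractibility in the glued manifold. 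You instead compute $\pi_1(M)$ outright via van Kampen and Bass--Serre theory: in codimension $\geq 3$ both edge maps of the amalgam are injective and the conclusion is immediate, while in codimension $2$ you identify $\ker\psi=\langle m^q\rangle$ using incompressibility of $i_2$, pass to the reduced amalgam over $\mathbf Z^{n-2}\times\mathbf Z/q$, and use the Karrass--Magnus--Solitar torsion theorem to verify that $[a,b]$ has order exactly $q$ in $F_2/\langle\langle[a,b]^q\rangle\rangle$, so that the kernel of $\pi_1(T^n-U_1)\to\pi_1(M)$ is $\{1\}\times\langle\langle[a,b]^q\rangle\rangle$, which meets $\mathbf Z^{n-2}\times\langle b\rangle$ trivially because the normal closure lies in $[F_2,F_2]$. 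I checked the codimension-$2$ reduction (the pushout does factor through the reduced amalgam by the universal property, and $[a,b]$ is not a proper power) and it is sound. Your route buys an exact description of the kernel and avoids the appendix covering constructions, at the price of importing the one-relator torsion theorem; the paper's lifting-property lemmas are lighter per se and are reused verbatim in the proofs of Proposition \ref{Prop: main 3} and Theorem \ref{Thm: main 5}, which is presumably why the authors set them up that way.
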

When $k=1$, we can further strengthen above result as following
\begin{proposition}\label{Prop: main 3}
Let $3\leq n\leq 7$. If an orientable marked manifold $(M_2,i_2)$ in $\mathcal M^n_{\mathbf S^1}$ satisfies the homotopically non-trivial condition, then the generalized connected sum $(T^n,i)\#_{\mathbf S^1}(M_2,i_2)$ admits no complete metric with positive scalar curvature. Moreover, if $g$ is a complete metric on $(T^n,i)\#_{\mathbf S^1}(M_2,i_2)$ with nonnegative scalar curvature, then the metric $g$ must be flat.
\end{proposition}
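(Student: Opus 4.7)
The plan is to reduce Proposition \ref{Prop: main 3} to Theorem \ref{Thm: main 1} by exhibiting an essential closed hypersurface $\Sigma$ in $X := (T^n, i)\#_{\mathbf S^1}(M_2, i_2)$ together with a non-zero degree map $f : \Sigma \to T^{n-1}$ satisfying the relative incompressibility $\ker i_* \subset \ker f_*$; here $\Sigma_0 = T^{n-1}$ lies in $\mathcal{C}_{deg}$ since $n-1 \le 6$.

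Choose coordinates on $T^n$ so that $i(\mathbf S^1)$ is the $\theta_1$-circle and the tubular neighborhood has the form $U_1 = \{\theta_2^2 + \cdots + \theta_n^2 < \epsilon^2\}$. The natural candidate is the parallel slice $\Sigma := \{\theta_2 = c_0\}$ for some $|c_0| > \epsilon$, a copy of $T^{n-1}$ entirely contained in $T^n - U_1$ and therefore embedded in $X$, with $\pi_1(\Sigma) = \langle e_1, e_3, \ldots, e_n\rangle \cong \mathbf Z^{n-1}$. To analyze $i_*$, I would apply van Kampen's theorem to the decomposition $X = (T^n - U_1) \cup_{\partial U}(M_2 - U_2)$, obtaining the pushout $\pi_1(X) \cong \pi_1(T^n - U_1) *_{\pi_1(\partial U)} \pi_1(M_2 - U_2)$ in which the base-circle class of $\partial U = \mathbf S^1 \times \mathbf S^{n-2}$ is identified with $e_1 \in \mathbf Z^n$ on the $T^n$-side and with $[i_2]$ on the $M_2$-side. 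A standard retraction argument (projecting out $e_1$ and collapsing $\pi_1(M_2)$) shows in any case that $\langle e_3, \ldots, e_n\rangle$ injects into $\pi_1(X)$.

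When $[i_2]$ has infinite order in $\pi_1(M_2)$, the amalgamated product is proper, $i_*$ is injective, and one concludes via Theorem \ref{Thm: main 1} with $f = \operatorname{id}: T^{n-1} \to T^{n-1}$; this case already recovers Proposition \ref{Prop: main 2} for $k = 1$. The main obstacle is the case when $[i_2]$ has finite order $k$ in $\pi_1(M_2)$: one then has $k e_1 \in \ker i_*$, yet no degree-nonzero self-map of $T^{n-1}$ can annihilate $k e_1$, since $\pi_1(T^{n-1})$ is torsion-free, so the straightforward use of the parallel slice with $f = \operatorname{id}$ fails.

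To handle the finite-order case, I would pass to a suitable finite cyclic cover $\tilde X \to X$ of degree $k$: on the $T^n$-side this is the standard $k$-fold cyclic cover along the $\theta_1$-direction, and compatibility across the surgery neck is ensured because $e_1$ and $[i_2]$ are both identified with the generator of the relevant $\mathbf Z/k$-quotient along the sphere bundle. A PSC metric on $X$ pulls back to a PSC metric on $\tilde X$, so it suffices to rule out PSC metrics on $\tilde X$; in $\tilde X$, the lift of the parallel slice $\Sigma$ embeds as a genuinely incompressible $T^{n-1}$, and Theorem \ref{Thm: main 1} applied to $\tilde X$ (with $\Sigma_0 = T^{n-1}$ and $f = \operatorname{id}$) furnishes the required obstruction. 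The rigidity statement under nonnegative scalar curvature follows analogously via the same pullback correspondence between $\tilde X$ and $X$.
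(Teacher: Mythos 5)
Your reduction works only in the case where the core circle $[i_2]$ has infinite order in $\pi_1(M_2)$; there the van Kampen computation does give injectivity of $\pi_1(T^{n-1})\to\pi_1(X)$ for the parallel slice and Theorem \ref{Thm: main 1} applies (modulo the side remark that $\pi_1(T^n-U_1)\cong\mathbf Z^n$ fails for $n=3$, where it is $\mathbf Z\times F_2$). But that case is essentially Proposition \ref{Prop: main 2} with $k=1$; the entire content of Proposition \ref{Prop: main 3} is the finite-order case (e.g.\ $M_2=\mathbf{RP}^3$ with $i_2$ the generator of $\pi_1=\mathbf Z/2$), and there your fix breaks down for two separate reasons. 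First, the $k$-fold cyclic cover $\tilde X\to X$ you invoke need not exist: it requires a homomorphism $\rho:\pi_1(X)\to\mathbf Z/k$ with $\rho(e_1)$ a generator, hence by van Kampen a homomorphism $\pi_1(M_2-U_2)\to\mathbf Z/k$ sending the core-circle class to a generator; if $\pi_1(M_2)$ is perfect with torsion (Poincar\'e sphere, or its product with a torus in higher dimensions) no such homomorphism exists. Second, and more fundamentally, incompressibility can never be gained by passing to a covering space: if a loop $\gamma$ in $\Sigma$ bounds a disk in $X$ and lifts to a loop $\tilde\gamma$ in $\tilde\Sigma$, the disk lifts as well, so $\tilde\gamma$ bounds in $\tilde X$. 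Concretely, $ke_1$ already dies in $\pi_1(X)$, the lift of the parallel slice has fundamental group $\langle ke_1,e_3,\dots,e_n\rangle$, and $ke_1$ still dies in $\pi_1(\tilde X)$; the lifted torus is exactly as compressible as the original. Since, as you yourself observe, no non-zero degree map $f:T^{n-1}\to\Sigma_0$ to an aspherical manifold can kill a primitive torsion-image class, no choice of $(\Sigma_0,f)$ rescues the relative incompressibility hypothesis $\ker i_*\subset\ker f_*$ for this slice.

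The paper therefore does not route Proposition \ref{Prop: main 3} through Theorem \ref{Thm: main 1} at all. For closed $M_2$ it shows that $\bar M=(T^n,i)\#_{\mathbf S^1}(M_2,i_2)$ is a Schoen--Yau--Schick manifold: the class $[\bar M]\frown(\beta_1\smile\cdots\smile\beta_{n-2})$ is non-spherical, because a spherical representative would force the $\mathbf S^1$-factor of the torus $\mathbf S^1\times T^{n-2}_{n-1}$ to be null-homotopic, which via the gluing Lemmas \ref{Lem: no contraction extension} and \ref{Lem: lifting property example} would contradict the homotopical non-triviality of $i_2$. This condition uses only that the single circle survives, not that a whole $\mathbf Z^{n-1}$ injects, which is precisely why it covers the finite-order case. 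For non-compact $M_2$ the paper first unwinds one circle direction, runs the $\mu$-bubble argument to produce a compact PSC hypersurface, and then applies the same Schoen--Yau--Schick argument to that hypersurface. You would need to replace your finite-order step with an argument of this kind (or some other mechanism that only uses non-contractibility of the single circle).
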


We point out that Proposition \ref{Prop: main 3} is closely related to the class of Schoen-Yau-Schick manifolds. Let us recall from \cite{Gromov2018} that an orientable closed manifold $M^n$ is called a Schoen-Yau-Schick manifold if there are cohomology classes $\beta_1,\,\beta_2,\ldots,\beta_{n-2}$ in $H^1(M,\mathbf Z)$ such that the homology class
$$
[M]\frown(\beta_1\smile\beta_2\smile\cdots\smile \beta_{n-2})
$$
in $H_2(M,\mathbf Z)$ is non-spherical, that is, it does not lie in the image of the Hurewicz homomorphism $\pi_2(M)\to H_2(M,\mathbf Z)$. In their work \cite{SY2017}, Schoen and Yau proved that every orientable closed Schoen-Yau-Schick manifold admits no smooth metric with nonnegative scalar curvature unless it is flat. Actually, the generalized connected sum $(T^n,i)\#_{\mathbf S^1}(M_2,i_2)$ turns out to be a Schoen-Yau-Schick manifold if the marked manifold $(M_2,i_2)$ is a closed orientable manifold satisfying the homotopiclly non-trivial condition (see the proof of Proposition \ref{Prop: main 3}). Even though, an extra effort needs to be made in our proof for Proposition \ref{Prop: main 3} to deal with the possible non-compactness of $M_2$.

Recall that Chodosh, Li and Liokumovich \cite{CLL2021} proved that any essential closed manifold admits no smooth metric with positive scalar curvature. In particular, the connected sum of an aspherical closed manifold with any other closed manifold preserves the topological obstruction for positive scalar curvature. For further generalizations of this fact it is natural to consider the generalized connected sum of an aspherical closed manifold with other closed or open manifolds. Since this paper is mainly devoted to establishing a class of positive mass theorems with an incompressible condition, we shall leave the discussion on this interesting topic to another paper.

In our later discussion on positive mass theorems with an incompressible condition, we will use the idea of Lohkamp compactification \cite{Lohkamp1999} so that the positive mass theorem can be reduced to a topological obstruction problem for complete metrics with positive scalar curvature, where Proposition \ref{Prop: main 2} and \ref{Prop: main 3} are ready to apply. %Such modifications will be made clear in Section \ref{Sec: 3}, where we come up with the weak incompressible condition and the weak homotopically non-trivial condition such that the above Proposition \ref{Prop: main 2} and \ref{Prop: main 3} continue to hold with marked manifolds $(M_2,i_2)$ satisfying these conditions.

\subsection{Positive mass theorems with an incompressible condition}
The positive mass theorem for asymptotically flat manifolds \cite{SY79PMT}\cite{SY2017} (see also \cite{Witten1981} for the spin case) appears to be one of the most beautiful results in both geometry of scalar curvature and general relativity. It states that the ADM mass has to be nonnegative for any asymptotically flat manifold with nonnegative scalar curvature, and the mass vanishes exactly when the manifold is isometric to the Euclidean space. %Currently the positive mass theorem has been generalized to the initial data set in a spacetime by Eichmair, Huang, Lee and Schoen \cite{EHLS16}.

From quantum gravity theory and string theory, gravitational instantons provide more examples of ``asymptotically flat'' manifolds with more general asymptotics at infinity. By definition a gravitational instanton means a non-compact hyperk\"ahler $4$-manifolds with decaying curvature at infinity, and various examples were discussed in \cite{K1989}, \cite{CK1998}, \cite{CH2005} as well as \cite{CK2002}. Given these plentiful examples it is a rather difficult problem to classify all possible gravitational instantons and it seems a nice idea to focus only on several special classes. In particular, Cherkis and Kapustin conjectured a classification scheme (see \cite{EJ2008}), which involves a consideration on the following four special families of gravitational instantons:
\begin{itemize}
\item[(i)] Asymptotically Locally Euclidean (ALE): asymptotical to $\mathbf R^4/\Gamma$ at infinity for some discrete finite group $\Gamma\subset O(4)$;
\item[(ii)] Asymptotically Locally Flat (ALF): asymptotical to the total space of a circle bundle over $\mathbf R^3$ or $\mathbf R^3/\mathbf Z_2$ at infinity;
\item[(iii)] ALG (no explicit meaning): asymptotical to the total space of $T^2$-bundle over $\mathbf R^2$ at infinity;
\item[(iv)] ALH (no explicit meaning): asymptotical to the total space of $F^3$-bundle over $\mathbf R$, where $F$ is a flat closed $3$-manifold.
\end{itemize}

Given these interesting examples, it is a natural question whether positive mass theorems hold for ``asymptotically flat'' manifolds with more general asymptotics. In their work \cite{HP78}, Hawking and Pope proposed the {\it generalized positive action conjecture}: any ALE 4-manifold with vanishing scalar curvature has nonnegative ADM mass, which vanishes if and only if the manifold is Ricci flat with self-dual Weyl curvature. Unfortunately, this conjecture turns out to be false in general. Actually, LeBrun \cite{Lebrun1988} constructed a family of counter-examples from the resolution of quotient spaces $\mathbf C^2/\mathbf Z_k$ with $k\geq 3$. On the other hand, counter-examples vialating the philosophy of positive mass theorems can be also found in the class of ALF manifolds, where there is a well-known example given by the product manifold $\mathbf R^2\times \mathbf S^2$ equipped with the Reissnet-Nordstr\"om metric (see \cite{Minerbe2008} for instance). We point out that this is a complete Riemannian manifold asymptotic to $\mathbf R^3\times \mathbf S^1$ at infinity with vanishing scalar curvature and negative total mass. The common feature of these counter-examples is that their ends are not incompressible.

Over the decades, attempts have been made to establish positive mass theorems with necessary additional conditions. Inspired from Witten's proof on the positive mass theorem for asymptotically flat manifolds, various authors proved positive mass theorems with an additional spin compactible condition in different scenarios. For further information, the audience can refer to \cite{Dahl1997}, \cite{Minerbe2008} and \cite{Dai2004}. Roughly speaking, the spin compactible condition can be used to guarantee the existence of spinors from a perturbation of classical ones, and then the Witten's proof can be applied in these cases without difficulty. Despite of its effectiveness, the spin compactible condition is usually not easy to verify and so this motivates us to search for other convenient conditions to guarantee the validity of positive mass theorems.

In the third part of this paper, we will research on positive mass theorems with some type of incompressible condition. In the following, we would like to consider the following class of manifolds.
\begin{definition}\label{Defn: AF with fiber F}
Let $(F,g_F)$ be a closed flat manifold. A triple $(M,g,\mathcal E)$ is called an asymptotically flat manifold with fiber $F$ if 
\begin{itemize}
\item $(M,g)$ is a complete Riemannian manifold with $d:=\dim M-\dim F\geq 3$;
\item $\mathcal E$ is an end of $M$ diffeomorphic to $(\mathbf R^d-B)\times F$;
\item Denote $g_0=g_{euc}\oplus g_F$. The metric $g$ on $\mathcal E$ satisfies
\begin{equation}\label{Eq: decay 1}
|g-g_0|_{g_0}+r|\nabla_{g_0}(g-g_0)|_{g_0}+r^2|\nabla_{g_0}^2(g-g_0)|_{g_0}=O(r^{-\mu}),\quad\mu>\frac{d-2}{2},
\end{equation}
where $\nabla_{g_0}$ is the covariant derivative with respect to $g_0$, and $r$ is the distance function on $\mathbf R^d$;
\item $R(g)\in L^1(\mathcal E,g)$.
\end{itemize}
\end{definition}
\begin{remark}
We emphasize that $(M,g)$ can have more than one ends but no additional requirement other than the completeness is imposed for ends other than $\mathcal E$. It may be better to call $(M,g,\mathcal E)$ asymptotically flat manifold with fiber $F$ and arbitrary ends as in \cite{LUY2021}, but we just omit ``arbitrary ends'' for short since the name is already very long.
\end{remark}

The definition above includes the following interesting examples:
\begin{itemize}
\item[(i)] {\it Schwarzschild-like manifolds with fiber $F$.} This class consists of the manifold $(\mathbf R^d-O)\times F^k$ equipped with the metrics
$$
g=\left(1+\frac{A}{r^{d-2}}\right)^{\frac{4}{d+k-2}}(g_{euc}\oplus g_F),\quad A\geq 0,
$$
where $O$ is the origin of $\mathbf R^d$ and $g_F$ is a flat metric on $F$. This provides a scalar-flat family of asymptotically flat manifolds with fiber $F$.

\item[(ii)] {\it Schwarzschild-product manifolds with fiber $F$.} Similarly, we can equip $(\mathbf R^d-O)\times F^k$ with the metrics
$$
g=\left(\left(1+\frac{A}{r^{d-2}}\right)^{\frac{4}{d-2}}g_{euc}\right)\oplus g_F,\quad A\geq 0.
$$
Clearly this provides another scalar-flat family of asymptotically flat manifolds with fiber $F$.

\item[(iii)] {\it Schwarzschild manifolds.} Fix $A\geq 0$. We can equip $(\mathbf R^d-B_{r_0})\times \mathbf S^1$, $r_0=A^{\frac{1}{d-2}}$, with the metric
$$
g=\left(\left(1+\frac{A}{r^{d-2}}\right)^{\frac{4}{d-2}}g_{euc}\right)\oplus \left(\left(\frac{1-Ar^{2-d}}{1+Ar^{2-d}}\right)^2\mathrm d\theta^2\right).
$$
When $A>0$, this provides a family of Ricci-flat metrics on $\mathbf S^{d-1}\times \mathbf R^2$ (after completion), which is asymptotically flat with fiber $\mathbf S^1$.

\item[(iv)] {\it Reissnet-Nordstr\"om metrics.} Fix real numbers $A$ and $B\neq 0$. Let us define the metric
$$
g=\left(\left(1+\frac{A}{r}+\frac{A^2+B^2}{4r^2}\right)^{2}g_{euc}\right)\oplus \left(\left(\frac{1-\frac{1}{4}r^{-2}(A^2+B^2)}{1+r^{-1}A+\frac{1}{4}r^{-2}(A^2+B^2)}\right)^2\mathrm d\theta^2\right)
$$
on $(\mathbf R^3-B_{r_0})\times \mathbf S^1$ with $r_0=\frac{1}{2}\sqrt{A^2+B^2}$. After completion this gives a family of scalar-flat metrics on $\mathbf S^2\times \mathbf R^2$, which is asymptotically flat with fiber $\mathbf S^1$.
\end{itemize}

For asymptotically flat manifolds with fiber $F$ we can introduce a total mass in the same spirit of the ADM mass for asymptotically flat manifolds.
\begin{definition}
Let $(M,g,\mathcal E)$ be an asymptotically flat manifold with fiber $F$. The total mass of $(M,g,\mathcal E)$ is defined to be
\begin{equation*}
\begin{split}
m(M,g,\mathcal E)=&\frac{1}{2|\mathbf S^{d-1}|\vol(F,g_F)}\\
&\qquad\cdot\lim_{\rho\to+\infty}\int_{S_\rho\times F}\ast_{g_0}\left(\Div_{g_0}g-d\tr_{g_0}g\right),
\end{split}
\end{equation*}
where $S_\rho$ is denoted to be the $\rho$-sphere in $\mathbf R^d$ centered at the origin and $\ast_{g_0}$ is the Hodge star operator with respect to the metric $g_0$.
\end{definition}
\begin{remark}
One easily checks that $m(M,g,\mathcal E)$ conincides with the ADM mass up to a positive scale if $(M,g,\mathcal E)$ is asymptotically flat in the usual sense.
\end{remark}

We are able to prove the following
\begin{theorem}\label{Thm: main 4}
If $(M,g,\mathcal E)$, $\dim M\leq 7$, is an asymptotically flat manifold with fiber $F$ such that its scalar curvature $R(g)\geq 0$ and one of the following holds:
\begin{itemize}
\item $i_*:\pi_1(\mathcal E)\to \pi_1(M)$ is injective;
\item or $F=\mathbf S^1$ and $i_*:\pi_1(\mathcal E)\to \pi_1(M)$ is non-zero,
\end{itemize} 
then we have $m(M,g,\mathcal E)\geq 0$.  In the first case, $m(M,g,\mathcal E)=0$ yields that $(M,g)$ is flat. In the second case, if the metric $g$ further satisfies
$$
\sum_{i=0}^3 r^i|\nabla^i_{g_0}(g-g_0)|_{g_0}=O(r^{-\mu}),\quad\mu>\frac{d-2}{2},
$$
then $m(M,g,\mathcal E)=0$ yields that $(M,g)$ splits as $\mathbf R^d\times \mathbf S^1$.
\end{theorem}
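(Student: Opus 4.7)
The strategy is a Lohkamp-style compactification that reduces the inequality to the scalar curvature obstructions Propositions \ref{Prop: main 2} and \ref{Prop: main 3}. Suppose for contradiction that $m(M,g,\mathcal E) < 0$. Using the strict negativity of the mass, I would deform $g$ on a large annular shell $\{L\le r\le 2L\}\times F$ of the end to produce a complete metric $\tilde g$ on $M$ that agrees with $g$ on the core, equals $g_0 = g_{\mathrm{euc}}\oplus g_F$ on $\{r\ge 2L\}\times F$, has $R(\tilde g)\ge 0$ everywhere, and $R(\tilde g)>0$ in the transition. Concretely one solves a Lichnerowicz-type Dirichlet problem for a conformal factor on the shell with prescribed boundary data matching $g$ on the inner sphere and $g_0$ on the outer sphere; after fiber-averaging the asymptotic expansion, the sign $m<0$ supplies the a priori estimate needed to keep the factor positive and the new scalar curvature nonnegative. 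A standard Kazdan-Warner perturbation then upgrades to $R(\tilde g)>0$ everywhere.

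\textbf{Reduction to the obstruction theorems.} With $\tilde g$ exactly flat outside a compact set, pick $L'>2L$ and identify $\{r\ge 2L\}\times F$ isometrically with the complement of a ball in $T^d\times F$ by periodically identifying translations of a cube $[-L',L']^d$ in $\mathbf R^d$. This produces a complete manifold $\hat M$ carrying a metric $\hat g$ with $R(\hat g)>0$, diffeomorphic to a generalized connect sum of $M$ (or a finite cover thereof) with $T^d\times F$ along a fiber. In Case (ii), $F=\mathbf S^1$ gives $T^d\times F=T^n$ with $n=d+1$ and $\hat M = (T^n,i)\#_{\mathbf S^1}(M,i_2)$; the hypothesis $i_{2,*}\ne 0$ is precisely the homotopically non-trivial condition, so Proposition \ref{Prop: main 3} yields the contradiction. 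In Case (i), Bieberbach's theorem provides a finite cover $T^k\to F$; using $\pi_1(F)\hookrightarrow\pi_1(M)$ one passes to a corresponding finite cover $\bar M\to M$ in which the fiber becomes $T^k$, and after carrying out the Lohkamp end-replacement on $\bar M$ one recognizes the outcome as $(T^n,i)\#_{T^k}(\bar M,\bar i_2)$ with $\bar i_2$ incompressible, contradicting Proposition \ref{Prop: main 2}. Hence $m(M,g,\mathcal E)\ge 0$ in both cases.

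\textbf{Rigidity and main obstacle.} For rigidity when $m=0$, the plan is to use a compactly-supported conformal perturbation: such a perturbation changes the mass linearly in its parameter while altering $R$ by a controllable amount. If $(M,g)$ were not flat in Case (i), one can engineer a perturbation producing a nearby metric with $R\ge 0$ but $m<0$, contradicting the inequality just established; this forces flatness. In Case (ii), the stronger third-derivative decay allows a Bochner-type argument promoting the asymptotic $\partial_\theta$-field to a globally parallel vector field with closed orbits, after which the Cheeger-Gromoll splitting theorem yields $(M,g)\cong \mathbf R^d\times\mathbf S^1$. The chief technical difficulty is executing the Lohkamp end replacement in the bundle setting: the Lichnerowicz operator on $(\mathbf R^d-B)\times F$ has a non-compact symmetry along $F$, so one must work fiber-averagedly and carefully relate the mass $m(M,g,\mathcal E)$ to an $L^2$-defect of the gluing correction, using $d\ge 3$ to ensure that the Euclidean directions dominate the decay. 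A second subtlety is identifying the correct finite cover of $M$ in Case (i), which depends on a residual-finiteness-type argument combining $\pi_1(F)\hookrightarrow\pi_1(M)$ with the canonical finite-index translation subgroup $\mathbf Z^k\subset\pi_1(F)$ provided by Bieberbach.
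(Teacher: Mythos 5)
Your overall strategy matches the paper's: compactify \`a la Lohkamp and invoke Propositions \ref{Prop: main 2} and \ref{Prop: main 3}, then handle rigidity by perturbation and a Bochner/splitting argument. But two steps as you describe them would not go through.

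First, the flattening of the end. Solving a ``Lichnerowicz-type Dirichlet problem on the shell $\{L\le r\le 2L\}$ with boundary data matching $g$ inside and $g_0$ outside'' is not the mechanism that works: for a fixed annulus with both boundary values prescribed there is no reason the interpolating conformal factor keeps $R\ge 0$, and the sign of the mass does not enter a local Dirichlet problem at all. The actual argument requires two global ingredients you have compressed away: (a) a density theorem (Proposition \ref{conformal end}) deforming $(M,g)$ to an asymptotically Schwarzschild-like metric, i.e.\ one that is exactly $u^{4/(d+k-2)}(g_{euc}\oplus g_{T^k})$ with $u$ harmonic near infinity and expansion $u=1+Ar^{2-d}+O(r^{-(d-1-\epsilon)})$ --- this rests on solving $\Delta_g u - fu=0$ globally with arbitrary ends, which is where the weighted Sobolev inequality on $(\mathbf R^d-B)\times T^k$ (Proposition \ref{Prop: Sobolev}) and the fiber-averaged expansion are genuinely needed; and (b) the Lohkamp truncation itself, which uses $m<0\Rightarrow A<0\Rightarrow u<1$ on a large sphere, and replaces $u$ by $\zeta\circ u$ for a concave increasing cutoff $\zeta$, so that $\Delta(\zeta\circ u)=\zeta''|\nabla u|^2+\zeta'\Delta u\le 0$ preserves nonnegative scalar curvature while making the metric exactly $g_{euc}\oplus g_{T^k}$ near infinity. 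Your ``a priori estimate from $m<0$'' gestures at this but the shell formulation would fail.

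Second, rigidity in Case (i). The compactly supported perturbation argument (conformal for scalar curvature, then $g-\varepsilon\eta\Ric(g)$ with Kazdan's eigenvalue lemma) only forces $R(g)\equiv 0$ and then $\Ric(g)\equiv 0$; in dimensions $\ge 4$ this is far from flatness. You still need to rule out a second end (Cheeger--Gromoll splitting plus a maximum-principle contradiction with the coordinate spheres $S_r\times T^k$) and then pass to the universal cover, where incompressibility of the end guarantees an asymptotically Euclidean region of full dimension, so that the Bishop--Gromov volume comparison forces flatness. In Case (ii) your plan to promote $\partial_\theta$ to a parallel field is not what is done and is harder to close; the paper instead builds harmonic coordinates $y^i$ asymptotic to the Euclidean $x^i$ and proves the identity $\sum_i\int_M|\nabla\alpha^i|^2+\Ric(\alpha^i,\alpha^i)=c(d,\mathbf S^1)\,m(M,g,\mathcal E)$ (Proposition \ref{Ricci decay}), so that $m=0$ and $\Ric\equiv 0$ make the $dy^i$ parallel and yield the splitting $\mathbf R^d\times\mathbf S^1$. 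Establishing that identity (via the weighted-space expansion $g^{ij}=\delta_{ij}-c_{ij}r^{2-d}+v^{ij}$) is the substantive content of the second rigidity case and is absent from your sketch.
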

%{\color{blue}
%\begin{remark}
%Here and in the Theorem \ref{Thm: main 5}, when $m(M,g,\mathcal E)=0$ or the angle at infinity equals to $2\pi$, the manifold $(M,g)$ is expected to split as $\mathbf R^d\times T^k$ although we only show the flatness.
%\end{remark}}

Even though we establish the class of positive mass theorems with an incompressible condition, it does not rule out any possibility for a positive mass theorem without incompressible conditions. For example, the Eguchi-Hason gravitational instanton \cite{EH1979} provides an example of complete manifolds with zero mass but it has a compressible end diffeomorphic to $\mathbf R^4/\mathbf Z_2$. So it might be possible to prove a positive mass theorem for ALE manifolds with its end diffeomorphic to $\mathbf R^4/\mathbf Z_2$ without any additional conditions.

It follows from Bartnik's work \cite{Bartnik1986} that the total mass is actually a modified integral of scalar curvature. From the Gauss-Bonnet formula, the counterpart for a complete surface turns out to be the angle at infinity. This leads us to consider the following class of manifolds.
\begin{definition}
Let $(F,g_F)$ be a closed flat manifold. A triple $(M,g,\mathcal E)$ is called an asymptotically conical manifold with fiber $F$ and (arbitrary ends) if 
\begin{itemize}
\item $(M,g)$ is a complete Riemannian manifold with $\dim M-\dim F= 2$;
\item $\mathcal E$ is an end of $M$ diffeomorphic to $(\mathbf R^2-B)\times F$;
\item there is a positive constant $\beta$ such that the metric $g$ on $\mathcal E$ satisfies
\begin{equation}\label{Eq: decay 2}
|g-g_\beta|_{g_\beta}+r|\nabla_{g_\beta}(g-g_\beta)|_{g_\beta}+r^2|\nabla_{g_\beta}^2(g-g_\beta)|_{g_\beta}=O(r^{-\mu}),\quad \mu>0,
\end{equation}
where $g_\beta=\mathrm dr^2+\beta^2r^2\mathrm d\theta^2+g_F$, $\nabla_{g_\beta}$ is its corresponding covariant derivative, and $r$ is the distance function on $\mathbf R^2$;
%\item $R(g)\in L^1(\mathcal E,g)$.
\end{itemize}
For any asymptotically conical manifold $(M,g,\mathcal E)$, the constant $2\pi\beta$ will be called the angle of $(M,g,\mathcal E)$ at infinity.
\end{definition}

Denote $i_F:F\to M$ to be the natural inclusion map from the composition of  the inclusions $i_1:F\to \mathcal E$ and $i_2:\mathcal E\to M$. Similarly we have
\begin{theorem}\label{Thm: main 5}
If $(M,g,\mathcal E)$, $\dim M\leq 7$, is an asymptotically conical manifold with fiber $T^{n-2}$ such that its scalar curvature $R(g)\geq 0$ and one of the following holds:
\begin{itemize}
\item $F=T^{n-2}$ and $(i_F)_*:\pi_1(F)\to \pi_1(M)$ is injective;
\item or $F=\mathbf S^1$ and $(i_F)_*:\pi_1(F)\to \pi_1(M)$ is non-zero,
\end{itemize} 
then the angle of $(M,g,\mathcal E)$ at infinity is no greater than $2\pi$, where the equality implies that $(M,g)$ is flat.
\end{theorem}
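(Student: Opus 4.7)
The plan is to mirror the Lohkamp compactification strategy behind Theorem \ref{Thm: main 4} and reduce Theorem \ref{Thm: main 5} to the obstructions in Propositions \ref{Prop: main 2} and \ref{Prop: main 3}. Assume for contradiction that $\beta>1$. Denote by $M_{r_0}$ the manifold $M$ with the region $\{r>r_0\}\subset\mathcal E$ removed. I would truncate $M$ at a large slice $\Sigma_{r_0}=\{r=r_0\}\times F$ inside $\mathcal E$ and glue in the compact piece $N=(T^2-D^2_{\beta r_0})\times F$ carrying the flat product metric, where $D^2_{\beta r_0}$ denotes a flat disk of radius $\beta r_0$, chosen so that $\partial N=\mathbf S^1_{2\pi\beta r_0}\times F$ is isometric to $\Sigma_{r_0}$ after a preliminary normalization of $g$ to the conical model outside a compact set.

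A direct mean curvature computation using the conical model at $\Sigma_{r_0}$ and the flat metric on $N$ yields, with outward normals from each side,
\begin{equation*}
H_{M_{r_0}}+H_N=\frac{1}{r_0}-\frac{1}{\beta r_0}=\frac{\beta-1}{\beta r_0},
\end{equation*}
which is strictly positive exactly because $\beta>1$. By Miao's corner smoothing, the Lipschitz glued metric $\tilde g$ on $\tilde M:=M_{r_0}\cup_{\Sigma_{r_0}} N$ can be approximated by smooth metrics of nonnegative scalar curvature with strict positivity in a tubular neighborhood of $\Sigma_{r_0}$. Topologically $\tilde M$ realizes the generalized connected sum $(T^n,i)\#_F(M,i_F)$, where $i:F\hookrightarrow T^n$ is the standard linear embedding of $F$ as a coordinate subtorus.

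In Case 1, the injectivity of $(i_F)_*$ is exactly the incompressible condition of Proposition \ref{Prop: main 2}, so any complete nonnegative scalar curvature metric on $\tilde M$ must be flat; our smooth approximation has $R>0$ somewhere and is therefore non-flat, a contradiction. In Case 2, the non-vanishing of $(i_F)_*$ matches the homotopically non-trivial condition of Proposition \ref{Prop: main 3}, and the same argument yields the same contradiction. This proves $\beta\leq 1$. For the equality case $\beta=1$, the same construction produces a smooth metric on $\tilde M$ with $R\geq 0$, to which the rigidity clause of Proposition \ref{Prop: main 2} or \ref{Prop: main 3} applies to conclude flatness of $\tilde g$, hence flatness of $g$ on the unaltered core of $M_{r_0}$; passing to the limit $r_0\to\infty$ shows $g$ is flat on all of $M$.

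The main obstacle I anticipate is the preliminary normalization, namely deforming $g$ on $\mathcal E$ to match $g_\beta$ exactly outside a compact set while preserving $R(g)\geq 0$. In contrast to the asymptotically flat setting underlying Theorem \ref{Thm: main 4}, where the Green function of the conformal Laplacian decays polynomially of sufficient order, the base $\mathbf R^2$ of our conical end admits only a logarithmic Green function, so the corrective conformal deformation has to be constructed in weighted H\"older spaces adapted to this logarithmic growth. Once this normalization is in place, the remainder of the argument proceeds in direct analogy with the proof of Theorem \ref{Thm: main 4}.
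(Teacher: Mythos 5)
Your reduction to Propositions \ref{Prop: main 2} and \ref{Prop: main 3} via truncation and gluing is the right target, and your corner computation $H_{M_{r_0}}+H_N=\tfrac{\beta-1}{\beta r_0}>0$ correctly identifies where $\beta>1$ enters. But the step you defer --- conformally normalizing $g$ to equal $g_\beta$ outside a compact set while keeping $R\geq 0$ --- is not a technicality to be filled in later; it is precisely the step that fails in this setting, and the paper explicitly abandons the Lohkamp/density approach for this reason. The relevant conformal factor would have to solve $\Delta u - cRu=0$ on the end, and on a $2$-dimensional base the bounded harmonic expansion $1+Ar^{2-d}+\dots$ that drives Proposition \ref{conformal end} is replaced by logarithmic growth, so one cannot produce a conformal deformation that both preserves nonnegative scalar curvature and keeps the metric asymptotically conical with a controlled angle. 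The paper's substitute is a quasi-spherical (Bartnik) construction: on a transition annulus $r_0\leq r\leq 2r_0$ the induced metrics $\bar g_r$ are interpolated by a cutoff from the actual induced metric to the exact flat product, the lapse $u$ is solved from the parabolic equation \eqref{Eq: QS equation} with initial data matching the mean curvature at the inner corner, and a comparison ODE gives $u\leq 1+O(r_0^{-\mu})$, so that the outer corner satisfies $H_{2r_0}>H_{2r_0,prod}$ precisely because $\beta>1$. This also resolves the issue you did not mention: even for the Lipschitz gluing to make sense, the induced metrics on the two sides of $\Sigma_{r_0}$ must agree, and without normalization they differ by $O(r_0^{-\mu})$; the quasi-spherical transition handles the metric matching and the mean-curvature inequality simultaneously.

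The equality case $\beta=1$ is also substantially harder than your sketch allows. At $\beta=1$ the corner inequality degenerates to $H_{2r_0}\geq H_{2r_0,prod}+O(r_0^{-1-\mu})$ with no definite sign, so the glued metric need not have nonnegative distributional scalar curvature and the rigidity clauses of Propositions \ref{Prop: main 2}--\ref{Prop: main 3} cannot be invoked directly. The paper instead argues in three separate steps: (i) scalar flatness, by producing conformal factors $u_k$ (Lemma \ref{Lem: conformal factor}) whose normal derivative gains $\delta/(\rho_k\log\rho_k)$ at $\partial B_{\rho_k}$ --- a logarithmic gain that beats the quasi-spherical error and restores a strict corner inequality if $R(g)>0$ somewhere; (ii) Ricci flatness by a Kazdan-type local perturbation; and (iii) flatness, which for a conical end does \emph{not} follow from the argument used in Theorem \ref{Thm: main 4}: one must use the incompressibility of the fiber to embed the slab $\Omega=\{r\geq 1,\ 0<\theta<2\pi\}$ of the universal cover of $\mathcal E$ into the universal cover of $M$, deduce Euclidean volume growth, and conclude by Bishop--Gromov. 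Your proposal of "flatness of $\tilde g$, hence flatness of $g$, then let $r_0\to\infty$" skips all of this.
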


 We point out that Theorem \ref{Thm: main 4} is a generalization of the classical positive mass theorem for asymptotically flat manifolds and the idea of the proof is similar to the original one. However, there are still additional works to be done.

First, the analysis involved turns out to be much more delicate due to the existence of the fiber $F$ and arbitrary ends. In order to solve PDEs on asymptotically flat manifolds with fiber $F$, we have to establish a weighted Sobolev inequality (see Proposition \ref{Prop: Sobolev}) on $(\mathbf R^d-B)\times F$, which allows tests function to be non-zero on the inner boundary. Also, some extra efforts need to be devoted to our construction of appropriate conformal factors due to the existence of arbitrary ends other than $\mathcal E$ (compare Proposition \ref{3.2likeSY} with \cite[Lemma 3.2]{SY79PMT}).

Second, the volume comparison theorem cannot be used to prove the flatness of $(M,g)$ from its Ricci-flatness if $F=\mathbf S^1$ and $(i_F)_*:\pi_1(F)\to \pi_1(M)$ is only non-zero. As a brand new observation, we find out that the fast decay Ricci curvature implies the following %mass-distribution 
result. 
\begin{proposition}
If $(M,g,\mathcal E)$ is an asymptotically flat manifold with fiber $F=\mathbf S^{1}$ and the unique end $\mathcal E$ such that 
$$\sum\limits^3_{k=0}r^k|\nabla_{g_0}^k (g-g_{0})|_{g_0}=O(r^{-\mu}), \quad\mu>\frac{d-2}{2},$$
and
$$|\nabla_{g_0} Ric|_{g_0}+r|\nabla_{g_0} Ric|_{g_0}=O(r^{-n-\epsilon})\quad \text{for some}\quad\epsilon>0 .$$
Let $\{x^{1},\cdots,x^{d},\theta\}$ be the given coordinate system of $(M,g,\mathcal E)$ at infinity, where $\theta$ is the parameter of $S^{1}$. Then there exists a coordinate system $\{y^{1},\cdots,y^{d},\theta\}$ outside a compact subset such that
$\Delta_{g}y^{i}=0,\,i=1,\cdots,d$. 
Moreover, if we denote $\alpha^{i}=dy^{i}$, then we have
\begin{equation}\label{Eq: GB mass}
\sum\limits^{d}_{i=1}\int_{M}|\nabla_g\alpha^{i}|_g^{2}+\Ric(\alpha^{i},\alpha^{i})\mathrm{d}V_{g}=c(d,S^{1})m(M,g,\mathcal{E}),
\end{equation}
where $c(d,S^{1})=d|\mathbf S^{d-1}|\vol(S^{1},g_{S^{1}})$.
\end{proposition}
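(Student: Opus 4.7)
The plan is to construct global harmonic functions $y^{1},\ldots,y^{d}$ on $M$ that asymptote to the Cartesian coordinates $x^{i}$ at infinity, apply the Weitzenböck--Bochner identity to the resulting harmonic $1$-forms $\alpha^{i}=\mathrm dy^{i}$, and identify the boundary integral at infinity with the total mass.

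First I would solve $\Delta_g y^{i}=0$ globally on $M$ by writing $y^{i}=\chi x^{i}+v^{i}$, where $\chi$ is a smooth cutoff equal to $1$ outside a large ball in $\mathcal E$, so the problem reduces to
\begin{equation*}
\Delta_g v^{i}=-\Delta_g(\chi x^{i}),
\end{equation*}
with $v^{i}\to 0$ at infinity. Because $\Delta_{g_0}x^{i}=0$ and $g\to g_0$ with the decay stipulated in \eqref{Eq: decay 1}, the right-hand side decays like $O(r^{-2-\mu})$, which belongs to the relevant weighted Lebesgue space. Using the weighted Sobolev inequality of Proposition~\ref{Prop: Sobolev} (established earlier in the paper) together with a standard Fredholm / isomorphism argument for $\Delta_g$ between weighted Sobolev spaces on asymptotically flat manifolds with fiber $F=\mathbf S^{1}$, I would obtain a solution $v^{i}$ with $|v^{i}|+r|\nabla_g v^{i}|+r^{2}|\nabla_g^{2}v^{i}|=O(r^{-\mu})$. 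The uniqueness of the end makes this Fredholm setup clean, and the decay of $v^{i}$ in $C^{1}$ guarantees that $\{y^{1},\ldots,y^{d},\theta\}$ indeed forms a coordinate chart outside a sufficiently large compact set.

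Second, since $\Delta_g y^{i}=0$, the form $\alpha^{i}=\mathrm dy^{i}$ is both closed and co-closed, hence harmonic. The Bochner identity gives
\begin{equation*}
\tfrac{1}{2}\Delta_g|\alpha^{i}|_g^{2}=|\nabla_g\alpha^{i}|_g^{2}+\Ric(\alpha^{i},\alpha^{i}).
\end{equation*}
Integrating on the exhaustion $M_\rho=M\cap\{r\leq\rho\}$, summing in $i$, and applying the divergence theorem yields
\begin{equation*}
\sum_{i=1}^{d}\int_{M_\rho}\!\!\bigl(|\nabla_g\alpha^{i}|_g^{2}+\Ric(\alpha^{i},\alpha^{i})\bigr)\mathrm dV_g=\tfrac{1}{2}\sum_{i=1}^{d}\int_{S_\rho\times F}\!\!\nu\bigl(|\alpha^{i}|_g^{2}\bigr)\mathrm d\sigma_g.
\end{equation*}
The decay assumptions, in particular the extra input on $\nabla_{g_0}\Ric$, are what make the bulk integrals absolutely convergent as $\rho\to\infty$, so the left-hand side converges to that of \eqref{Eq: GB mass}.

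The third and most delicate step is to match the limiting boundary integral with $c(d,\mathbf S^{1})m(M,g,\mathcal E)$. Substituting $y^{i}=x^{i}+v^{i}$ and $\alpha^{i}=\mathrm dx^{i}+\mathrm dv^{i}$, expanding $|\alpha^{i}|_g^{2}=g^{ii}+\text{(cross terms)}+O(r^{-2\mu})$, and taking $\nu=\partial_r$, the terms involving $\mathrm dv^{i}$ contribute $o(1)$ to the flux thanks to the harmonicity $\Delta_g y^{i}=0$ integrated against $x^{i}$ (a Green's-identity-style cancellation). What remains is the classical Bartnik-type identity
\begin{equation*}
\lim_{\rho\to\infty}\sum_{i=1}^{d}\int_{S_\rho\times F}\tfrac{1}{2}\partial_r g^{ii}\,\mathrm d\sigma_{g_0}=\lim_{\rho\to\infty}\int_{S_\rho\times F}\!\!\ast_{g_0}(\Div_{g_0}g-d\,\tr_{g_0}g),
\end{equation*}
up to absolutely convergent error, which after dividing by the proper normalization gives the mass multiplied by $c(d,\mathbf S^{1})=d|\mathbf S^{d-1}|\vol(\mathbf S^{1},g_{\mathbf S^{1}})$.

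The main obstacle is this last identification: the computation must be carried out in the presence of the fiber $\mathbf S^{1}$ and requires that all correction terms coming from $v^{i}$ and from the non-flatness of $g$ on the fiber factor vanish in the limit. This is exactly where the stronger $C^{3}$-decay $\sum_{k=0}^{3}r^{k}|\nabla_{g_0}^{k}(g-g_0)|_{g_0}=O(r^{-\mu})$ and the fast decay of $\nabla_{g_0}\Ric$ enter, ensuring both the integrability needed on the bulk side and the vanishing of the subleading boundary fluxes.
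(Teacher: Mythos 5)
Your plan captures the correct skeleton: solve $\Delta_g y^i=0$ by a cutoff plus weighted Fredholm theory, apply Bochner to $\alpha^i=\mathrm dy^i$, and convert the resulting flux into the mass. This is the paper's approach. However, the third step contains a genuine gap, and it is precisely the place where the fast Ricci decay hypothesis enters.

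The problem is that the decay rate $\mu>\frac{d-2}{2}$ only makes the boundary term $\int_{S_\rho\times\mathbf S^1}\partial_\nu|\alpha^i|_g^2$ conditionally convergent, and it does \emph{not} follow that the contribution from $\mathrm dv^i$ (or $\mathrm du^i$) is $o(1)$. On the contrary, the correction terms carry exactly the mass information. The paper instead applies the Bochner identity to $g^{ij}=g(\mathrm dy^i,\mathrm dy^j)$ itself, i.e.\ $\tfrac12\Delta_g g^{ij}=g(\nabla\mathrm dy^i,\nabla\mathrm dy^j)+\Ric(\mathrm dy^i,\mathrm dy^j)$; combined with the assumed decay $|\nabla_{g_0}\Ric|+r|\nabla_{g_0}\Ric|=O(r^{-n-\epsilon})$, this puts $\Delta_g(g^{ij}-\delta_{ij})$ into a fast-decaying weighted space and, via Minerbe's expansion theorem, yields the sharp asymptotics $g^{ij}=\delta_{ij}-c_{ij}r^{2-d}+O(r^{2-d-\epsilon_3})$. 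Only with this refined expansion can one evaluate $\tfrac12\partial_\nu g^{ii}$ in the limit (getting $\tfrac{d-2}{2}\sum c_i\,|\mathbf S^{d-1}|\vol(\mathbf S^1)$). Your proposal never derives this expansion and instead asserts a ``Bartnik-type identity'' that does not hold in this generality.

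There is a second missing ingredient: the ADM-type mass involves the full trace $\partial_i g_{aa}$, including the fiber components $g_{\theta\theta}$ and $g_{i\theta}$, whereas the Bochner flux only sees the Euclidean block $g^{ii}$. These are not the same integrand, and the discrepancy cannot be dismissed by decay alone. The paper closes the gap by extracting from $\Delta_g y^i=0$ the identity
\begin{equation*}
(d-2)\Bigl(c_i-\tfrac12\textstyle\sum_j c_j\Bigr)\frac{y^i}{|y|^d}+\partial_\theta g_{i\theta}+\tfrac12\partial_i g_{\theta\theta}+O(r^{-\mu''})=0,
\end{equation*}
which relates the $\theta$-part of the mass flux to the same constants $c_i$. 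It also uses Bartnik's coordinate invariance of the mass to justify computing the mass in the harmonic chart. Without these two pieces — the explicit $c_i r^{2-d}$ expansion of $g^{ij}$ and the relation for $g_{\theta\theta}$, $g_{i\theta}$ — your argument cannot produce the correct normalization $c(d,\mathbf S^1)=d\,|\mathbf S^{d-1}|\vol(\mathbf S^1)$, and the ``cross terms and $v^i$ contributions vanish'' claim is unjustified.
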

\begin{remark}
The left hand side of \eqref{Eq: GB mass} is called the Gauss-Bonnet mass by Minerbe in his work \cite{Minerbe2008}. This proposition says that if the Ricci curvature decay is fast enough, then the Gauss-Bonnet mass is equal to the total mass up to a positive scale (independent of the metric $g$).
\end{remark}
In the proof of Theorem \ref{Thm: main 5}, the Lohkamp compactification is no longer valid since the density theorem like Proposition \ref{conformal end} is hard to prove due to the possible blow-up of the conformal factors on asymptotically conical manifolds with fiber $F$. As an alternative, we develop a new compactification method based on the quasi-spherical metric (see Proposition \ref{Prop: PMT to PSC 2}).

\subsection{Brown-York mass and the fill-in problem} Other than the total mass, another central topic in general relativity is the definition of a quantity measuring the mass contained in a bounded region, which is called {\it quasi-local mass}. Brown and York \cite{BY1991} \cite{BY1993} raised the so called Brown-York mass as a candidate for quasi-local mass.
Given a compact
spacelike hypersurface $\Omega$ in a spacetime, assuming its boundary $\partial\Omega$ is a $2$-sphere with
positive Gauss curvature, the Brown–York mass of $\partial\Omega$ is given by
$$
m_{BY}(\partial\Omega,\Omega)=\frac{1}{8\pi}\int_{\partial\Omega}(H_0-H)\,\mathrm d\sigma.
$$
Here $\mathrm d\sigma$ is the induced area element on $\partial\Omega$, $H$ is the mean curvature of $\partial\Omega$ in $\Omega$, and $H_0$ is the mean curvature of $\partial\Omega$ after isometrically embedded into the Euclidean $3$-space. This definition makes sense since the solution of Weyl's embedding problem \cite{Nirenberg1953} \cite{Pogorelov1964} guarantees the existence and uniqueness of an isometric embedding from $2$-spheres with positive Gaussian curvature to the Euclidean space. It is also reasonable since the third named author and Tam \cite{ST2002} proved the nonnegativity of the Brown-York mass when $\Omega$ has nonnegative scalar curvature and mean convex boundary. But without doubt the definition is too restrictive if the boundary $\partial\Omega$ of the region $\Omega$ has to be $2$-spheres with positive Gaussian curvature.
In order to generalize the notion of Brown-York mass to more general surfaces, Mantoulidis and Miao \cite{MM2017} introduced the $\Lambda$-invariant as follows. Given a connected closed surface $(\Sigma,g_\Sigma)$, one first makes the collection $\mathcal F$ of all {\it admissible fill-ins} of $(\Sigma,g_\Sigma)$, i.e. compact $3$-manifolds $(\Omega,g)$ such that
\begin{itemize}
\item $(\partial\Omega,g|_{\partial\Omega})$ is isometric to $(\Sigma,g_\Sigma)$;
\item $\partial\Omega$ is mean-convex with respect to the outer unit normal;
\item the scalar curvature $R(g)\geq 0$.
\end{itemize}
Then the $\Lambda$-invariant is defined to be
$$
\Lambda(\Sigma,g_\Sigma)=\max_{\Omega\in\mathcal F}\int_{\partial\Omega}H\,\mathrm d\sigma.
$$
The essential consideration for $\Lambda$-invariant is the finiteness since if the $\Lambda$-invariant is proven to be finite all the time, then one can introduce the (generalized) Brown-York mass to be
\begin{equation}\label{Eq: BY}
m_{BY}(\partial\Omega,\Omega)=\Lambda(\partial\Omega,g|_{\partial\Omega})-\int_{\partial\Omega}H\,\mathrm d\sigma.
\end{equation}
By proving the finiteness of $\Lambda$-invariant for $2$-spheres with arbitrary smooth metrics, Mantoulidis and Miao successfully got rid of the positive Gaussian curvature condition in the definition of Brown-York mass. But it still leaves a problem whether the Brown-York mass given by \eqref{Eq: BY} is well-defined for closed surfaces with positive genus.

As another applications of our Proposition \ref{Prop: main 2} and \ref{Prop: main 3}, we can generalize the Brown-York mass to the flat $2$-torus case. Namely, we can show
\begin{proposition}\label{Prop: main 7}
For any flat metric $g_{flat}$ on $T^2$, we have
$$
\Lambda(T^2,g_{flat})<+\infty.
$$
Futhermore, if $(T^2,g_{flat})$ splits as $\mathbf S^1(a)\times \mathbf S^1(b)$, then
$$
\Lambda(T^2,g_{flat})=4\pi^2\max\{a,b\}.
$$
\end{proposition}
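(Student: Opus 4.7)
The lower bound in the product case (assuming $a\geq b$) is witnessed by the canonical flat fill-in $(\bar{B}^2(b)\times\mathbf{S}^1(a),g_{\text{euc}}\oplus g_{\mathbf{S}^1(a)})$, whose boundary is $(\mathbf{S}^1(b)\times\mathbf{S}^1(a),g_{flat})$ with constant mean curvature $1/b$ and integrated mean curvature $4\pi^2 a$. My plan for the matching upper bound and for finiteness in the general flat case is a contradiction argument using Proposition~\ref{Prop: main 3}: given an admissible fill-in $(\Omega,g)$ of $(T^2,g_{flat})$ whose total boundary mean curvature exceeds the target, I will produce a complete metric of positive scalar curvature on a generalized connected sum of the form $(T^3,i)\#_{\mathbf{S}^1}(M_2,i_2)$ satisfying the homotopical non-triviality hypothesis, yielding a contradiction.

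Concretely, take $V=\bar{B}^2(b)\times\mathbf{S}^1(a)$ to be a flat solid-torus model with $\partial V$ isometric to $\partial\Omega$ (for a non-product flat metric, $b$ is chosen as the systole of $(T^2,g_{flat})$ and the identification absorbs a controlled bi-Lipschitz factor), let $M_2=\Omega\cup_{\partial\Omega}V$, and let $i_2:\mathbf{S}^1\to M_2$ be the core of $V$. Then $(T^3,i)\#_{\mathbf{S}^1}(M_2,i_2)$ is diffeomorphic to $\Omega\cup_{\partial\Omega}(T^3\setminus(D^2\times\mathbf{S}^1))$, where the linear $\mathbf{S}^1\subset T^3$ is chosen so that the complementary region carries the induced flat metric on $\partial\Omega$ with outward mean curvature $-1/b$. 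The Miao pointwise condition $H_\Omega\geq 1/b$ may fail along $\partial\Omega$, but the integrated version $\int_{\partial\Omega}(H-1/b)\,\mathrm{d}\sigma>0$ is guaranteed by hypothesis.

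To replace the integrated condition by a pointwise one, I apply a Mantoulidis-type collar step: insert a short scalar-flat warped product $[0,\varepsilon]\times T^2$ at $\partial\Omega$ whose outer slice still has induced metric $g_{flat}$ but constant mean curvature $\bar H=(4\pi^2 ab)^{-1}\int_{\partial\Omega}H\,\mathrm{d}\sigma>1/b$. The corner jump at the outer interface with $T^3\setminus(D^2\times\mathbf{S}^1)$ is then pointwise strictly positive, so Miao's smoothing theorem produces a $C^\infty$ metric on the generalized connected sum with $R\geq 0$ everywhere and $R>0$ on an open set; an Aubin--Schoen conformal deformation then gives $R>0$ globally. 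Provided the van Kampen computation shows that $i_2$ is homotopically non-trivial in $M_2$ (which can be arranged by a suitable choice of attaching map), this contradicts Proposition~\ref{Prop: main 3}.

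The principal obstacle is the construction of the Mantoulidis-style collar: one must solve a quasi-local Bartnik/Shi--Tam PDE on $[0,\varepsilon]\times T^2$ for the warping factor that prescribes the flat outer induced metric and the constant outer mean curvature $\bar H$ while keeping $R\geq 0$ throughout. A secondary subtlety is the $\pi_1$ check: when every natural attaching map kills the core of $V$ in $M_2$, a fallback argument---either choosing a different attaching, or handle-removing on $\Omega$ to reduce to the classical Mantoulidis--Miao $\mathbf{S}^2$ case---is needed.
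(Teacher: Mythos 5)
Your high-level architecture matches the paper's: the lower bound via the flat solid torus $\bar B^2(b)\times\mathbf S^1(a)$, and the upper bound by converting an over-massive fill-in into a complete PSC metric on a generalized connected sum $(T^3,i)\#_{\mathbf S^1}(M_2,i_2)$, contradicting Proposition \ref{Prop: main 3}. The paper also handles your ``secondary subtlety'' about $\pi_1$ cleanly and you should adopt its argument: by Dehn's lemma, for \emph{any} compact $3$-manifold $\Omega$ with torus boundary at least one of the two circle factors is homotopically non-trivial in $\Omega$, so one applies the mean-curvature bound with that circle as the fiber $F$; this is why the answer is $4\pi^2\max\{a,b\}$ (a max over the two possible incompressible directions), not something you can ``arrange by a suitable choice of attaching map.''

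The genuine gap is the Mantoulidis-type collar, which as described cannot exist. In a metric $u^2\,\mathrm dt^2+g_t$ on $[0,\varepsilon]\times T^2$ the mean curvature of the slice $\{t\}\times T^2$ is $\tfrac{1}{2u}g_t^{ij}\partial_t(g_t)_{ij}$; if both the inner and outer slices induce the same flat metric $g_{flat}$ (and more generally if $g_t$ is not expanding), you cannot make the outer slice have constant mean curvature $\bar H>1/b>0$. More fundamentally, a short scalar-flat collar that preserves the induced boundary metric while homogenizing the mean curvature to its average is essentially equivalent to the total-mean-curvature bound you are trying to prove, so this step is circular. The paper's actual mechanism is the Shi--Tam quasi-spherical extension over the \emph{non-compact} region $\Omega_{ext}\times\mathbf S^1$ (exterior of a round circle in $\mathbf R^2$, foliated by expanding circles times $\mathbf S^1$): one solves $\bar H_t\,\partial_t u=u^2\Delta_{\bar g_t}u+\tfrac12 R(\bar g_t)(u-u^3)$ with initial data $u_0=\bar H_0/H$ matching the corner exactly, proves global existence, monotonicity of $\int_{\bar\Sigma_t}\bar H_t(u^{-1}-1)$, and convergence $u(t,\cdot)\to\beta_0$ via a parabolic Harnack/rescaling argument; the hypothesis $\int_{\partial\Omega}H>4\pi^2\max\{a,b\}$ forces $\beta_0<1$, so far out the quasi-spherical mean curvature strictly exceeds the product one and the truncation/gluing goes through pointwise. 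Your treatment of general (non-product) flat metrics is also insufficient: a bi-Lipschitz identification of boundaries destroys the isometry condition in the definition of admissible fill-ins. The paper instead uses that the space of flat metrics on $T^2$ is connected and builds an explicit expanding neck $\mathrm ds^2+s^2g_{(s-1)/k}$ interpolating to a scaled product metric, again controlled by the quasi-spherical equation, losing only a fixed factor $C(k)$ in the total mean curvature.
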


This proposition (as well as the more general Proposition \ref{Prop: main 6}) also gives a partial answer to the following fill-in problem raised by Gromov. In his work \cite{Gromov2019b}, Gromov suggested a systematic discussion on the following fill-in problem: {\it given a closed Riemannian $n$-manifold $(\Sigma,g_\Sigma)$ associated with a smooth function $h:\Sigma\to \mathbf R$, can one find a compact Riemannian $(n+1)$-manifold $(\Omega,g)$ such that
\begin{itemize}
\item[(i)] $(\partial\Omega,g|_{\partial\Omega})$ is isometric to $(\Sigma,g_\Sigma)$ through a diffeomorphism $\phi:\partial\Omega\to\Sigma$;
\item[(ii)] the mean curvature function $H$ of $\partial\Omega$ with respect to the unit outer normal equals to $h\circ \phi$;
\item[(iii)] the scalar curvature $R(g)$ is nonnegative?
\end{itemize}  }

Other than those partial results in \cite{Gromov2019b}, \cite{SWWZ2021}, \cite{SWW2020}, we mention two complete results related to the fill-in problem above. Without the prescribed function $h$, Shi, Wang and Wei \cite{SWW2020} proved that $(\Sigma,g_\Sigma)$ always admits a compact Riemannian manifold $(\Omega,g)$ satisfying (i) and (iii) if $\Sigma$ can be realized as the boundary of a compact manifold. Based on this result, Miao \cite{Miao2021} proved that there is a universal constant $C=C(\Sigma,g_\Sigma)$ such that $(\Omega,g)$ satisfying (i), (ii) and (iii) does not exist if the prescribed function $h\geq C$. 

Related to the estimate for $\Lambda$-invariant, Gromov proposed the following conjecture:
\begin{conjecture}
There is a universal constant $C=C(\Sigma,g_\Sigma)$ such that any $(\Omega,g)$ with properties (i), (ii) and (iii) satisfies
$$
\int_{\partial\Omega} H\,\mathrm d\sigma\leq C.
$$
\end{conjecture}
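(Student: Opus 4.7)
The plan is a compactification-and-positive-mass argument, generalizing the approach of Mantoulidis-Miao \cite{MM2017} and the flat-torus case encoded in Proposition~\ref{Prop: main 7}. Given a fill-in $(\Omega,g)$ of $(\Sigma,g_\Sigma)$ with boundary mean curvature $h$, I would build a reference exterior manifold $(\mathcal{E},g_\mathcal{E})$ such that $R(g_\mathcal{E})\ge 0$, $\partial\mathcal{E}$ is isometric to $(\Sigma,g_\Sigma)$ with mean curvature equal to $h$, and $g_\mathcal{E}$ has a prescribed asymptotic model at infinity for which a positive mass theorem is available. Gluing $(\Omega,g)$ to $(\mathcal{E},g_\mathcal{E})$ along $\partial\Omega\cong\partial\mathcal{E}$ produces a complete manifold $(M,g_M)$ with $C^0$ metric and matched mean curvatures, hence no distributional scalar-curvature contribution across the interface, and with a well-defined total mass which, by an integration-by-parts boundary computation, equals an explicit constant determined by $(\Sigma,g_\Sigma)$ and the reference model plus a term proportional to $-\int_{\partial\Omega}H\,d\sigma$. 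A positive mass theorem then yields the desired bound $\int_{\partial\Omega}H\,d\sigma\le C(\Sigma,g_\Sigma)$.

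To implement this I would first handle topological types where the relevant positive mass theorem is in hand. For $\Sigma=S^2$ the construction of Mantoulidis-Miao gives an asymptotically flat exterior and the classical ADM positive mass theorem closes the argument. For $(\Sigma,g_\Sigma)$ a flat $2$-torus this is exactly Proposition~\ref{Prop: main 7}, where $(\mathcal{E},g_\mathcal{E})$ is asymptotically flat or conical with fiber $\mathbf{S}^1$ in the sense of Definition~\ref{Defn: AF with fiber F} and the positive mass theorem is supplied by Theorem~\ref{Thm: main 4} or \ref{Thm: main 5}. For a general Riemannian metric $g_\Sigma$ on $T^2$, uniformization gives $g_\Sigma=u^2 g_{flat}$ for some conformal factor $u>0$, and one should be able to reduce to the flat case by an explicit conformal-change calculation, at the cost of modifying the effective boundary mean curvature by a term controlled by $\int_\Sigma|\nabla u|\,d\sigma$. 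For higher genus $\Sigma$ a hyperbolic model end together with an asymptotically hyperbolic positive mass theorem should play the analogous role, although an incompressibility input in the spirit of Theorem~\ref{Thm: main 4} would need to be supplied.

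The hardest step is constructing the reference exterior $(\mathcal{E},g_\mathcal{E})$ with prescribed boundary mean curvature and a fixed asymptotic model at infinity. The natural tool is Bartnik's quasi-spherical ansatz, which reduces the existence problem to a quasilinear parabolic equation for a lapse function over a foliation by parallel copies of $\Sigma$; its solvability in the sphere case depends on positive Gauss curvature of $g_\Sigma$, while in the flat and hyperbolic cases one must develop a different ansatz and verify long-time existence, which is exactly the analytic heart of the argument behind Proposition~\ref{Prop: main 7}. A secondary obstacle is to arrange that the glued manifold $(M,g_M)$ still satisfies the incompressibility or homotopically-nontrivial hypothesis of Theorem~\ref{Thm: main 4} or \ref{Thm: main 5}; this restricts how the reference end may be attached, but it is compatible with the flexibility built into the kernel condition $\ker i_*\subset\ker f_*$ in Theorem~\ref{Thm: main 1}.
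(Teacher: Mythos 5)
You should first note that the statement you are proving is labeled a \emph{conjecture} in the paper, and the paper does not prove it: it explicitly says the conjecture ``is now largely open if no additional condition is imposed,'' and only records partial answers (the works \cite{ST2002}, \cite{MM2017}, \cite{SWWZ2021}, \cite{SWW2020} and Proposition \ref{Prop: main 7}, all of which assume mean-convex boundary, i.e.\ they bound the $\Lambda$-invariant rather than settle the conjecture). Your proposal is essentially a description of the strategy behind those partial results -- build a scalar-nonnegative exterior with matching boundary data via a quasi-spherical ansatz, glue, and invoke a positive mass theorem -- and in the flat-torus, mean-convex case it does track the paper's actual argument (Proposition \ref{Prop: main 6}, proved by solving the quasi-spherical equation \eqref{Eq: QS} on $\Omega_{ext}\times T^k$ and compactifying so that Propositions \ref{Prop: main 2} and \ref{Prop: main 3} apply). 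But as a proof of the conjecture it has genuine gaps.

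First, conditions (i)--(iii) impose no sign on $h$, whereas your central tool collapses without mean convexity: the quasi-spherical lapse equation $\bar H_t\,\partial_t u=u^2\Delta_{\bar g_t}u+\cdots$ is parabolic only where $\bar H_t>0$, and the initial condition $u_0=\bar H_0/H$ is meaningless where $H\le 0$; so your argument at best bounds $\Lambda$, not the conjectured quantity. Second, for a non-flat metric on $T^2$ the conformal reduction you sketch is not carried out anywhere (the paper deforms only through \emph{flat} metrics using a neck with controlled second fundamental form, see \eqref{Eq: second fundamental form}--\eqref{Eq: change of mean curvature}), and for higher genus the ``asymptotically hyperbolic PMT'' you invoke does not match the hypothesis $R(g)\ge 0$ (one would need $R\ge -n(n-1)$ normalization) and is not available in this setting. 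Third, the positive mass theorems of this paper require an incompressibility or homotopical-nontriviality hypothesis that an arbitrary fill-in need not satisfy -- the Reissner--Nordstr\"om example shows the mass can be negative without it -- so you must supply a topological argument (as the paper does for product tori via Dehn's lemma, showing at least one circle factor of $\partial\Omega$ is non-contractible in $\Omega$) before the gluing-plus-PMT step can close. Without these inputs the proposal does not yield the conjecture, and indeed no proof of it exists in the paper.
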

This conjecture is now largely open if no additional condition is imposed. Under the further assumption that $\Omega$ has mean convex boundary, the works in \cite{ST2002}, \cite{MM2017}, \cite{SWWZ2021}, \cite{SWW2020} as well as Proposition \ref{Prop: main 7} give several partial answers.
\subsection{The arrangement of this paper} 
The rest of this paper is organized as follows. In Section 2, we give the proof for Theorem \ref{Thm: main 1}. In Section 3, we present proofs for Proposition \ref{Prop: main 2} and Proposition \ref{Prop: main 3}. Section 4 is devoted to proving Theorem \ref{Thm: main 4} and Theorem \ref{Thm: main 5} with different compactification arguments. As a preparation, we also establish weighted Sobolev inequalities on $(\mathbf R^n-B)\times T^k$ (see Proposition \ref{Prop: Sobolev} and Corollary \ref{Cor: Sobolev Lp}). Finally we include a discussion on the fill-in problem and prove Proposition \ref{Prop: main 7}.

 \medskip
 {\it Acknowledgements.} We would like to thank Professor Shing-Tung Yau for drawing our attention to positive mass theorems on manifolds with general asymptotic structure at the infinity. We are also grateful to Dr. Chao Li for many inspiring discussions with the third named author on relationships between the positive mass theorem for ALF manifolds and the fill-in problem.

\section{Proof for Theorem \ref{Thm: main 1}}
Our proof for Theorem \ref{Thm: main 1} is mainly based on the idea from Gromov and Lawson \cite{GL83} but some modifications are needed in order to use the soap bubble method.
It would be nice if we can just deal with an embedding $i:\Sigma\to M$, and so we start with the following reduction lemma.
\begin{lemma}\label{Lem: reduction}
Under the same assumption of Theorem \ref{Thm: main 1}, we can find an essential closed manifold $\Sigma'$ over $\Sigma_0$ and a manifold $M'$ such that there are an embedding $i':\Sigma'\to M'$, a projection map $p':M'\to \Sigma_0$, and a non-zero degree map $f':\Sigma'\to \Sigma_0$
such that the following diagram
\begin{equation*}
\xymatrix{ \Sigma'\ar[rr]^{i'}\ar[dr]_{f'} & &M'\ar[dl]^{p'}\\
& \Sigma_0&}
\end{equation*}
is commutative.
\end{lemma}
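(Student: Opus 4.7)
The plan is to invoke the standard graph-embedding construction, which turns any continuous map $i:\Sigma\to M$ into an embedding at the cost of replacing $M$ by the product $\Sigma\times M$. This is a natural choice because the graph $\Gamma_i=\{(x,i(x))\}$ of any continuous map is automatically a topological embedding of its domain, so no topological obstruction needs to be overcome.

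Concretely, I would take $\Sigma':=\Sigma$, $f':=f$, and $M':=\Sigma\times M$, and then define
\[
 i':\Sigma\to \Sigma\times M,\qquad i'(x)=(x,i(x)),
\]
together with
\[
 p':\Sigma\times M\to \Sigma_0,\qquad p'(x,m)=f(x).
\]
The verifications are routine: $\Sigma'$ is essential over $\Sigma_0$ since $f'=f$ is the given non-zero degree map; $i'$ is an embedding because it is injective (its first coordinate recovers $x$), continuous, and a homeomorphism onto its image via the projection $\Sigma\times M\to \Sigma$ restricted to $\Gamma_i$; and $p'$ is continuous as the composition of the smooth projection with $f$. The commutativity is immediate, since
\[
 p'(i'(x))=p'(x,i(x))=f(x)=f'(x).
\]
After approximating $i$ by a smooth map within its homotopy class one may further assume $i'$ is a smooth embedding, so that $M'$ is a differentiable manifold with $i'(\Sigma')$ a differentiable submanifold.

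There is essentially no obstacle at the level of this lemma: the construction is universal in $i$ and does not even invoke the hypothesis $\ker i_*\subseteq\ker f_*$ from Theorem \ref{Thm: main 1}; that hypothesis is not needed at the level of the reduction and will only enter later through the essential structure carried by the composition $p'\circ i'=f'$ into the aspherical manifold $\Sigma_0\in\mathcal C_{deg}$. The one subtlety to flag is that this reduction enlarges the ambient dimension from $n$ to $2n-1$, so the ``hypersurface'' in the original setup becomes an embedded submanifold of codimension $n$ in $M'$; this must be accommodated when setting up the $\mu$-bubble machinery downstream in the proof of Theorem \ref{Thm: main 1}, but it is not a difficulty for the lemma itself.
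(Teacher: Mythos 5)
Your construction formally satisfies the letter of the lemma but not its purpose, and it cannot be spliced into the proof of Theorem \ref{Thm: main 1}. The lemma is a reduction: the quadruple $(M',\Sigma',i',p')$ it produces must be usable in place of $(M,\Sigma,i)$ for the rest of the argument. Two properties are essential there and both fail for $M'=\Sigma\times M$. First, the downstream argument derives a contradiction from a complete positive scalar curvature metric on $M$, so $M'$ must inherit such a metric; in the paper this is automatic because $M'$ is taken to be a covering of $M$ (the one with $\hat p_*(\pi_1(\hat M))=i_*(\pi_1(\Sigma))$), whereas the product $\Sigma\times M$ bears no such relation to $M$. Second, the separation and intersection-number argument, the $\mu$-bubble construction, and the degree argument on $\partial\Omega$ all require $\Sigma'$ to be a closed two-sided \emph{hypersurface} in a manifold of dimension at most $7$; your $i'(\Sigma)$ has codimension $n$ in a $(2n-1)$-dimensional manifold, so none of that machinery applies. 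The issue you flag as something to be ``accommodated downstream'' is precisely the content the lemma is supposed to deliver.

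The fact that your argument never uses the hypothesis $\ker i_*\subset\ker f_*$ nor the asphericity of $\Sigma_0$ is the telltale sign that the real work has been bypassed. In the paper these hypotheses are exactly what produce the map $p'$ on a covering of $M$: one passes to the covering $\hat M$ with $\hat p_*\pi_1(\hat M)=i_*\pi_1(\Sigma)$, lifts $i$ to $\hat i$, uses $\ker\hat i_*=\ker i_*\subset\ker f_*$ to factor $f_*$ through a homomorphism $\hat J:\pi_1(\hat M)\to\pi_1(\Sigma_0)$, realizes $\hat J$ by a continuous map $\hat j:\hat M\to\Sigma_0$ using that $\Sigma_0$ is aspherical, and finally replaces the (possibly non-embedded) $\hat i(\Sigma)$ by an embedded orientable hypersurface representing the class $\hat i_*[\Sigma]\in H_{n-1}(\hat M)$ via Lemma \ref{Lem: realize}, keeping a component on which $\hat j$ has non-zero degree. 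The graph trick skips all of this and correspondingly produces an object on which the theorem cannot be run.
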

\begin{proof}
Fix a marked point $\sigma$ in $\Sigma$ and denote $m=i(\sigma)$ in $M$. First let us take the covering $p:(\hat M,\hat m)\to (M,m)$ such that
$$p_*(\pi_1(\hat M,\hat m))=i_*(\pi_1(\Sigma,\sigma))\subset \pi_1(M,m).$$
From lifting we can find a continuous map $\hat i:(\Sigma,\sigma)\to (\hat M,\hat m)$ such that the following diagram
\begin{equation*}
\xymatrix{
& (\hat M,\hat m)\ar[d]^{p}\\
(\Sigma,\sigma)\ar[ur]^{\hat i}\ar[r]_{i}& (M,m)}
\end{equation*}
is commutative.
Since the induced map $p_*:\pi_1(\hat M,\hat m)\to (M,m)$ is injective, we have
$
\ker \hat i_*=\ker i_*\subset \ker f_*
$ in $\pi_1(\Sigma)$. As a result, we can find a homomorphism $\hat J:\pi_1(\hat M,\hat m)\to \pi_1(\Sigma_0,\sigma_0)$ with $\sigma_0=f(\sigma)$ such that $f_*=\hat J\circ \hat i_*$. Since $\Sigma_0$ is aspherical, from the algebraic topology (see \cite[Proposition 1B.9]{Hatcher2002} for instance) we can construct a continuous map $\hat j:(\hat M,\hat m)\to (\Sigma_0,\sigma_0)$ such that the induced map $\hat j_*:\pi_1(\hat M,\hat m)\to \pi_1(\Sigma_0,\sigma_0)$ equals to $\hat J$. Furthermore, the map $\hat j\circ \hat i$ is homotopic to $f$ and so they induce the same correspondance between homology groups of $\Sigma$ and $\Sigma_0$. In particular, the fundamental class $[\Sigma]$ is mapped to a non-trivial homology class $\beta$ in $H_{n-1}(\hat M)$ under the map $\hat i$. From Lemma \ref{Lem: realize} we can find an embedded orientable hypersurface $\Sigma'$ to represent the homology class $\beta$. Notice that we have
$$\hat j_*(\beta)=f_*([\Sigma])=(\deg f)\cdot[\Sigma_0]\neq 0\in H_{n-1}(\Sigma_0,\mathbf Z).$$
Therefore, we can find one component of $\Sigma'$ such that the map $\hat j$ restricted to this component gives a non-zero degree map to $\Sigma_0$. For simplicity we still denote it by $\Sigma'$. Let $i':\Sigma'\to \hat M$ be the embedding of $\Sigma'$ and $f'=\hat j\circ i'$. Then we have the following commutative diagram \begin{equation*}
\xymatrix{ \Sigma'\ar[rr]^{i'}\ar[dr]_{f'} & &\hat M \ar[dl]^{\hat j}\\
& \Sigma_0&}.
\end{equation*}
The proof is now completed by taking $M'=\hat M$ and $p'=\hat p$.
\end{proof}

Now we are ready to prove Theorem \ref{Thm: main 1}.

\begin{proof}[Proof of Theorem \ref{Thm: main 1}]
From Lemma \ref{Lem: reduction} we can assume that $i:\Sigma\to M$ is an embedding and that there is a map $j:M\to \Sigma_0$ such that the following diagram
\begin{equation*}
\xymatrix{ \Sigma\ar[rr]^{i}\ar[dr]_{f} & &M \ar[dl]^{j}\\
& \Sigma_0&}
\end{equation*}
is commutative. Possibly passing to a two-sheeted covering, we can assume that both $\Sigma$ and $M$ are orientable and so it makes sense to consider homology groups of $\Sigma$ and $M$ with integer coefficients. Fix a marked point $\sigma$ in $\Sigma$ and we denote $m=i(\sigma)$ and $\sigma_0=f(\sigma)$.
In the following, we consider the covering $\tilde p:(\tilde M,\tilde m)\to (M,m)$ such that
\begin{equation}\label{Eq: lift fundamental group}
\tilde p_*(\pi_1(\tilde M,\tilde m))=i_*(\pi_1(\Sigma,\sigma))\subset \pi_1(M,m).
\end{equation}
By lifting we can construct an embedding $\tilde i:(\Sigma,\sigma)\to (\tilde M,\tilde m)$ such that the diagram
\begin{equation}\label{Diagram: 1}
\xymatrix{
& (\tilde M,\tilde m)\ar[d]^{\tilde p}\ar[dr]^{\tilde j=j\circ \tilde p}&\\
(\Sigma,\sigma)\ar[ur]^{\tilde i}\ar[r]_{i}& (M,m)\ar[r]^{j}&(\Sigma_0,\sigma_0)}.
\end{equation}
is commutative.

In order to apply the soap bubble method, we need to show that the embedded hypersurface $\tilde \Sigma=\tilde i(\Sigma)$ separates $\tilde M$ into two unbounded components. This can be seen from the following contradiction argument. Assume that $\tilde M-\tilde \Sigma$ has only one component. Then we can find a closed curve $\tilde\gamma$ which intersects $\tilde \Sigma$ once. From \eqref{Eq: lift fundamental group} and the injectivity of $\tilde p_*$, it is clear that the map $\tilde i_*:\pi_1(\Sigma,\sigma)\to \pi_1(\tilde M,\tilde m)$ is surjective. In particular, we can find a closed curve $\gamma$ in $\Sigma$ such that $\tilde i\circ \gamma$ is a closed curve in $\tilde\Sigma$ which is homotopic to $\tilde \gamma$. Notice that $\tilde \Sigma$ has trivial normal bundle and so we can push $\tilde i\circ \gamma$ to one side of $\tilde \Sigma$ such that the new closed curve has no intersection with $\tilde \Sigma$. This is impossible since the intersection number keeps invariant under homotopic. Since $\tilde\Sigma$ is homologically non-trivial, the components of $\tilde M-\tilde\Sigma$ are both unbounded.

Now we are going to deduce a contradiction if there is a complete metric $g$ on $M$ with positive scalar curvature. Lift this metric to $\tilde g$ on $\tilde M$. The idea is to find an orientable hypersurface in $\tilde M$ homologous to $\tilde\Sigma$, which admits a smooth metric with positive scalar curvature, based on the soap bubble method. Since $\tilde\Sigma$ separates $\tilde M$ into two unbounded components, the signed distance function to $\tilde\Sigma$ is well-defined and surjective onto $\mathbf R$. Through mollification we can construct a smooth proper function $\rho:\tilde M\to\mathbf R$ with $\Lip \rho <1$ and $\rho^{-1}(0)=\tilde\Sigma$. Given any small positive constant $\epsilon$, from a simple construction (see \cite{Zhu2020} for instance) we can find a smooth function
$$h_\epsilon:\left(-\frac{1}{n\epsilon},\frac{1}{n\epsilon} \right)\to\mathbf R$$
satisfying $h_\epsilon'<0$,
\begin{equation}\label{Eq: positive outside}
\frac{n}{n-1}h_\epsilon^2+2h_\epsilon'=n(n-1)\epsilon^2\quad \text{in}\quad \left(-\frac{1}{n\epsilon},-\frac{1}{2n}\right]\cup\left[\frac{1}{2n}, \frac{1}{n\epsilon}\right),
\end{equation}
\begin{equation}\label{Eq: error inside}
\sup_{-\frac{1}{2n}\leq t\leq \frac{1}{2n}} \left|\frac{n}{n-1}h_\epsilon^2+2h_\epsilon'\right|\leq C(n)\epsilon,
\end{equation}
and
\begin{equation}\label{Eq: barrier}
\lim_{t\to\mp \frac{1}{n\epsilon}} h_\epsilon(t)=\pm\infty.
\end{equation}
Denote $\Omega_0=\{\rho <0\}$. We consider the functional
$$
\mathcal B_\epsilon(\Omega)=\mathcal H^{n-1}(\partial^*\Omega)-\int_{ M}(\chi_\Omega-\chi_{\Omega_0})h_\epsilon\circ\rho\,\mathrm d\mathcal H^n
$$
defined on
$$
\mathcal C_\epsilon=\left\{\text{Caccipoli set}\,\Omega\subset \tilde M:\Omega\Delta\Omega_0\Subset\phi^{-1}\left(- \frac{1}{n\epsilon},\frac{1}{n\epsilon}\right)\right\}.
$$
Notice that the subset $K=\rho^{-1}\left([-\frac{1}{2n},\frac{1}{2n}]\right)$ is compact and so the scalar curvature $R(\tilde g)$ in $K$ is bounded below by a positive constant. Combined with \eqref{Eq: positive outside} and \eqref{Eq: error inside}, we can take $\epsilon$ to be small enough such that the quantity
$$
R(\tilde g)+\left(\frac{n}{n-1}h_\epsilon^2+2h_\epsilon'\right)\circ \rho
$$
is positive everywhere. Since the dimension of $\tilde M$ is no greater than $7$, we can find a smooth minimizer $\Omega$ in $\mathcal C_\epsilon$ for the functional $\mathcal B_\epsilon$ with \eqref{Eq: barrier} as a barrier condition (refer to \cite{Zhu2020} for details). It is not difficult to compute that $\partial\Omega$ satisfies the following stability inequality
\begin{equation}\label{Eq: stability}
\int_{\partial\Omega}|\nabla\phi|^2\mathrm d\sigma\geq \frac{1}{2}\int_{\partial\Omega}\left(R(\tilde g)-R_{\partial\Omega}+\left(\frac{n}{n-1}h_\epsilon^2+2h_\epsilon'\right)\circ \rho\right)\phi^2\,\mathrm d\sigma
\end{equation}
for all $\phi$ in $C^\infty(M)$, where $R_{\partial\Omega}$ is the scalar curvature of $\partial\Omega$ with the induced metric from $(\tilde M,\tilde g)$. From \eqref{Eq: stability} one can deduce that the conformal Laplace operator of $\partial\Omega$ is positive and so $\partial\Omega$ admits a smooth conformal metric with positive scalar curvature. From the definition of class $\mathcal C_\epsilon$, we have
$$\partial\Omega=\tilde\Sigma+\partial(\Omega-\Omega_0)$$
in the chain level, which implies $\partial\Omega$ is homologous to $\tilde\Sigma$. At this stage, the desired contradiction is close at hand. Since $\partial\Omega$ is homologous to $\tilde\Sigma$, the map $\tilde j:\tilde M\to \Sigma_0$ in diagram \eqref{Diagram: 1} restricted to $\partial\Omega$ gives a non-zero degree map from $\partial\Omega$ to $\Sigma_0$. This yields that $\partial\Omega$ is an essential closed manifold over $\Sigma_0$. From our assumption $\Sigma_0\in\mathcal C_{deg}$ we conclude that $\partial\Omega$ cannot admit any smooth metric with positive scalar curvature and we obtain a contradiction.

{Finally let us prove the rigidity part. Given a complete metric $g$ on $M$ with nonnegative scalar curvature, the obstruction for positive scalar curvature metric combined with Kazdan's deformation theorem \cite{Kazdan82} yields that the metric $g$ is Ricci flat. From previous proof the covering $\tilde M$ has two ends and the splitting theorem \cite{CG1971} yields that $\tilde M$ splits as the Riemannian product $\tilde\Sigma'\times \mathbf R$, where $\tilde \Sigma'$ is a Ricci-flat essential closed manifold over $\Sigma_0$.  From \cite[Theorem 1.4]{FW1974}, we can assume $\tilde\Sigma'=\tilde\Sigma_1'\times T^k$ after passing to a covering without loss of generality, where $\tilde\Sigma_1'$ is closed and simply connected. All we need to show is $k=n-1$. Denote $f:(\tilde\Sigma',\tilde\sigma')\to (\Sigma_0,\sigma_0)$ to be the non-zero degree map associated to $\tilde\Sigma$ and consider the covering $\tilde p:(\tilde\Sigma_0,\tilde\sigma_0)\to (\Sigma_0,\sigma_0)$ such that
$$f_*(\pi_1(\tilde\Sigma',\tilde\sigma'))=\tilde p_*(\pi_1(\tilde\Sigma_0,\tilde\sigma_0)).$$
Since we can lift $f$ to a non-zero degree map $\tilde f:\tilde\Sigma'\to \tilde\Sigma_0$, we see that $\tilde\Sigma_0$ is closed and so the top dimensional homology group $H_{n-1}(\tilde\Sigma_0)\neq 0$. On the other hand, since aspherical manifolds have no torsion elements in their fundamental groups, the fundamental group of $\tilde\Sigma_0$ has to be $\mathbf Z^l$ with $l\leq k$. It is a basic topological fact that the homotopy type of an aspherical manifolds is uniquely determined by its fundamental group. Therefore, $\tilde\Sigma_0$ must be homotopic to the $l$-torus $T^l$ with $l=n-1$ and this implies $k=n-1$.}
\end{proof}

\section{Proof for Proposition \ref{Prop: main 2} and \ref{Prop: main 3}}\label{Sec: 3}

This section devotes to a detailed proof for Proposition \ref{Prop: main 2} and \ref{Prop: main 3}.
\begin{proof}[Proof of Proposition \ref{Prop: main 2}]
From the definition of the generalized connected sum, we have
$$(T^n,i)\#_{T^k}(M_2,i_2)=T^k\times (T^{n-k}-B)\sqcup_{\Phi} \left(M_2-U_2\right),$$
where $U_2$ is the tubular neighborhood of $i_2(\Sigma)$ in $M_2$ and the gluing map $\Phi$ is a fiber preserving diffeomorphism between $\partial(T^k\times B)$ and $\partial U_2$.
Notice that each $\mathbf S^1$ is in good pairing with some $T^{n-k-1}$ in $T^{n-k}-B$ as shown in Figure \ref{Fig: 1}.

\begin{figure}[htbp]
\centering
\includegraphics[width=5cm]{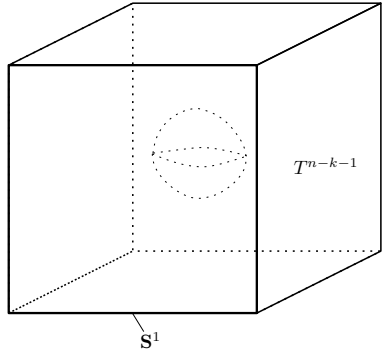}
\caption{$T^{n-k}-B$ can be viewed as the quotient space of punctured cube $I^{n-k}-B$ by identifying opposite faces. Faces and edges has nice intersection property. For example, the right face represents a linear $T^{n-k-1}$ in $T^k-B$, which has intersection number one with the circle $\mathbf S^1$ represented by the horizon edge.}
\label{Fig: 1}
\end{figure}

Fix one such $T^{n-k-1}$ and denote it by $T^{n-k-1}_0$. We would like to show the incompressiblity of the hypersurface $\Sigma =T^k\times T^{n-k-1}_0$ in $(T^n,i)\#_{T^k}(M_2,i_2)$. Fix a point $p$ in $\Sigma$. Then the fundamental group $\pi_1(\Sigma,p)$ is isometric to $\mathbf Z^{n-1}$ in the sense that every closed curve $\gamma:(\mathbf S^1,1)\to (\Sigma,p)$ is homotopic to a closed curve in the form of
$$
\gamma_1^{c_1}\ast \gamma_2^{c_2}\ast \cdots\ast \gamma_{n-1}^{c_{n-1}},\quad c_i \in \mathbf Z,
$$
where $\gamma_i$ are coordinate circles in $\Sigma$ passing through $p$.

In the following, we are going to show that $\gamma$ is homotopically trivial in $\Sigma$ under the assumption that $\gamma$ is homotopic to a point in $(T^n,i)\#_{T^k}(M_2,i_2)$. Based on the nice pairing property above, for each $i\geq k+1$ we can find $T^{n-k-1}_i$ in $T^{n-k}-B$ such that
$$
c_i=[\gamma]\cdot [T^k\times T^{n-k-1}_i]=0.
$$
This means that the closed curve $\gamma$ can be homotopic to a closed curve in the $T^k$-component of $\Sigma$, still denoted by $\gamma$. Notice that $\gamma$ can be homotopic to a closed curve $\bar \gamma$ in the boundary $\partial\left(T^k\times B\right)$. Denote $c_1:T^k\times B\to i(T^k)$ and $c_2:U_2\to i_2(T^k)$ to be the contraction maps associated to tubular neighborhoods. Since the diffeomorphism $\Phi$ is fiber-preserving, we have the following commutative diagram
\begin{equation*}
\xymatrix{\partial(T^k\times B)\ar[r]^{\Phi}\ar[d]^{\partial c_1}&\partial U_2\ar[d]^{\partial c_2}\\
i(T^k)\ar[r]^{i_2\circ i^{-1}}&i_2(T^k).}
\end{equation*}
If the closed curve $\gamma$ cannot be homotopic to a point in the $T^k$-component of $\Sigma$,
then the closed curve $\bar\gamma$ is homotopically non-trivial in the $T^k$-component of $\partial(T^k\times B)$ and the same thing holds for the closed curve $(\partial c_1)\circ \bar \gamma$ in $i(T^k)$. From above commutative diagram, 
we conclude that the closed curve $\Phi\circ \bar\gamma$ is homotopic to a homotopically non-trivial closed curve in $i_2(T^k)$. Now
the incompressible property implies that $\Phi\circ \bar\gamma$ is also homotopically non-trivial in $M_2$, let alone $M_2-U_2$. It then follows from Lemma \ref{Lem: no contraction extension} and Lemma \ref{Lem: lifting property example} that $\Phi\circ\bar \gamma$ cannot be homotopically trivial in $(T^n,i)\#_{T^k}(M_2,i_2)$, but this is impossible since the closed curve $\Phi\circ \bar\gamma$ after gluing is homotopic to $\gamma$, which has been assumed to be null homotopic in $(T^n,i)\#_{T^k}(M_2,i_2)$. As a consequence, the closed curve $\gamma$ is homotopic to a point in $T^k$ as well as in $\Sigma$. Now our main Theorem \ref{Thm: main 1} can be applied to obtain the desired consequence.
\end{proof}

Next let us give a detailed proof for Proposition \ref{Prop: main 3}.

\begin{proof}[Proof of Proposition \ref{Prop: main 3}]
Let us start with the following special case.

{\it Case 1. $M_2$ is closed and orientable.} From Figure \ref{Fig: 1}, we can find $(n-1)$ pairs of $T^{n-2}$ and $\mathbf S^1$ in $T^{n-1}-B$ with intersection number one. Let us label them as $T^{n-2}_l$ and $\mathbf S^1_l$ for $l=1,2,\ldots,n-1$ such that $\mathbf S^1_l$ is exactly the $l$-th circle component of $T^{n-1}-B$. Denote $\beta_l$ to be the cohomology class coming from the Poincar\'e dual of $\mathbf S^1\times T^{n-2}_l$ in the generalized connected sum
$$(T^n,i)\#_{\mathbf S^1}(M_2,i_2)=\mathbf S^1\times (T^{n-1}-B)\sqcup_\Phi (M_2-U_2).$$
Notice that $\mathbf S^1\times T^{n-2}_l$ has intersection number one with $\mathbf S^1_l$. So every class $\beta_l$ is non-trivial in $H^1(\bar M,\mathbf Z)$. Here and in the sequel, we will write $\bar M=(T^n,i)\#_{\mathbf S^1}(M_2,i_2)$ for short.

Now we would like to show that the homology class
$$
\tau:=[M]\frown(\beta_1\smile\beta_2\smile\cdots\smile\beta_{n-2})
$$
in $H_2(\bar M,\mathbf Z)$ does not lie in the image of Hurewicz homomorphism $\pi_2(\bar M)\to H_2(\bar M,\mathbf Z)$. Otherwise, we can find a cycle $C$ in $\bar M$ consisting of $2$-spherical components such that it represents the class $\tau$. Now let us investigate the intersection of the cycle $C$ and the hypersurface $\Sigma:=\mathbf S^1\times T^{n-2}_{n-1}$. From basic algebraic topology, the intersection $C\cap \Sigma$ is a union of closed curves in $\Sigma$, which is homologous to the $\mathbf S^1$-component of $\Sigma$. Clearly, each of these curves is homotopic to a point through a deformation in some $2$-spherical component of $\Sigma$. Notice that $\Sigma$ is a $(n-1)$-torus and so the Hurewicz homomorphism $\pi_1(\Sigma)\to H_1(\Sigma,\mathbf Z)$ is an isomorphism. As a result, the $\mathbf S^1$-component of $\Sigma$ is null homotopic in $\bar M$, and the same thing holds for the $\mathbf S^1$-component of $\partial \left(\mathbf S^1\times B\right)$. Denote $\gamma$ to be a closed curve in $\partial \left(\mathbf S^1\times B\right)$ homotopic to the $\mathbf S^1$-component. It follows from Lemma \ref{Lem: no contraction extension} and Lemma \ref{Lem: lifting property example} that the closed curve $\Phi\circ\gamma$ must shrink to a point in $M_2-U_2$. 
From the commutative diagram
\begin{equation*}
\xymatrix{\partial(\mathbf S^1\times B)\ar[r]^{\Phi}\ar[d]^{\partial c_1}&\partial U_2\ar[d]^{\partial c_2}\\
i(\mathbf S^1)\ar[r]^{i_2\circ i^{-1}}&i_2(\mathbf S^1).}
\end{equation*}
we see that $\Phi\circ \gamma$ is homotopic to $i_2:\mathbf S^1\to M_2$ and so the map $i_2:\mathbf S^1\to M_2$ is homotopically trivial. This leads to a contradiction.

The above discussion yields that $\bar M$ is a closed Schoen-Yau-Schick manifold and then the desired consequence comes from Schoen-Yau's work in \cite{SY2017}.

{\it General case when $M_2$ may be non-compact.} We still use the soap bubble method to overcome the difficulty from non-compactness. As before, we can find $(n-1)$ pairs of $T^{n-2}$ and $\mathbf S^1$ in $T^{n-1}-B$ with intersection number one, labelled by $T^{n-2}_l$ and $\mathbf S^l_l$ with $l=1,2,\ldots,n-1$. Let $\bar M$ be the covering of $(T^n,i)\#_{\mathbf S^1}(M_2,i_2)$ from an unwinding along $\mathbf S^1_1$. Intuitively, one can imagine $\bar M$ as follows. First we take the covering $\mathbf R\times T^{n-2}$ of $T^{n-1}$. After taking away infinitely many disjoint balls $\{B_i\}_{i\in\mathbf Z}$ and production with the circle, the manifold
$$
\mathbf S^1\times\left(\mathbf R\times T^{n-2}-\bigcup_{i\in\mathbf Z} B_i\right)
$$
provides a covering of $\mathbf S^1\times (T^{n-1}-B)$. Next we glue infinitely many copies of $M_2-U_2$ to this manifold along each boundary component $\mathbf S^1\times \partial B_i$ and then the resulting manifold is our desired covering $\bar M$. If we lift the $(n-1)$-torus $\mathbf S^1\times T^{n-2}_1$ to $\bar M$, we can obtain an embedded hypersurface, denoted by $\bar T^{n-1}_1$, in $\bar M$ that seperates $\bar M$ into two unbounded components.

Now let us deduce a contradiction under the assumption that the generalized connected sum $(T^n,i)\#_{\mathbf S^1}(M_2,i_2)$ admits a complete metric with positive metric. By lifting, we obtain a complete metric with positive scalar curvature on $\bar M$. With the exactly same argument as in the proof of Theorem \ref{Thm: main 1}, we can find an embedded hypersurface $\bar\Sigma$ homologous to $\bar T^{n-1}_1$ in $\bar M$ such that $\bar\Sigma$ admits a smooth metric with positive scalar curvature. The image of $\bar\Sigma$ under the covering map is now a hypersurface $\Sigma$ in $(T^n,i)\#_{\mathbf S^1}(M_2,i_2)$ homologous to $\mathbf S^1\times T^{n-2}_1$ with the same property. Since $\Sigma$ is compact, we can proceed our discussion in a large compact subset $K$ of $(T^n,i)\#_{\mathbf S^1}(M_2,i_2)$ containing $\Sigma$. In this compact subset $K$, it still makes sense to take the Poincar\'e dual $\beta_l$ of $\mathbf S^1\times T^{n-2}_l$. Through the same argument as in Case 1, one can conclude that $\Sigma$ is a closed orientable Schoen-Yau-Schick manifold. This leads to a contradiction since $\Sigma$ admits a smooth metric with positive scalar curvature.
\end{proof}

\section{Positive mass theorems with an incompressible condition}
\subsection{A weighted Sobolev Inequality}\label{Sec: Sobolev}
In this subsection, we denote $\mathbf R^n$ to be the Euclidean $n$-space with $n\geq 2$ and $B$ to be the {\it open} unit ball in $\mathbf R^n$. Also we denote $r$ to be the radical distance function on $\mathbf R^n$. Given positive constants $d_i$, $i=1,2,\ldots,k$, we use $T^k$ to denote the product $k$-torus
$$
\mathbf S^1(d_1)\times \mathbf S^1(d_2)\times \cdots \times \mathbf S^1(d_k).
$$

The purpose of this section is to prove the following weighted Sobolev inequality.
\begin{proposition}\label{Prop: Sobolev}
There is a universal constant $C$ depending only on $n$, $k$ and $d_i$, $i=1,2,\ldots,k$, such that the inequality
\begin{equation}\label{Eq: sobolev weighted}
    \left(\int_{(\mathbf R^n-B)\times T^k}|f|^{\frac{n+k}{n+k-1}}r^{\frac{-k}{n+k-1}}\,\mathrm dx\mathrm dt\right)^{\frac{n+k-1}{n+k}}\leq C\int_{(\mathbf R^n-B)\times T^k}|\nabla f|\,\mathrm dx\mathrm dt
\end{equation}
holds for any $f$ in $C_c^1((\mathbf R^n-B)\times T^k)$. Here the function $f$ is allowed to take non-zero values on the inner boundary $\partial B\times T^k$.
\end{proposition}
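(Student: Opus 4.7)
My plan is to establish \eqref{Eq: sobolev weighted} via the Federer--Fleming recipe: by the coarea formula and the layer-cake representation, it suffices to prove the weighted isoperimetric inequality
\begin{equation*}
\Bigl(\int_E r^{-k/(n+k-1)}\,dx\,dt\Bigr)^{(n+k-1)/(n+k)} \leq C\,\mathcal H^{n+k-1}\bigl(\partial^* E\cap\overline{(\mathbf R^n-B)\times T^k}\bigr)
\end{equation*}
for bounded Caccioppoli sets $E\subset\overline{(\mathbf R^n-B)\times T^k}$. The scale invariance of \eqref{Eq: sobolev weighted} under dilations of the $\mathbf R^n$-factor is responsible for the specific weight $r^{-k/(n+k-1)}$ and the exponent $p=(n+k)/(n+k-1)$, and the fact that $f$ may be non-zero on $\partial B\times T^k$ corresponds to not counting the inner sphere in the perimeter of $E$.

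To prove the isoperimetric inequality I will slice in the $T^k$-direction. Setting $u(x)=|\{t\in T^k:(x,t)\in E\}|\in[0,|T^k|]$, the weighted volume of $E$ equals $\int_{\mathbf R^n-B}u(x)\,r^{-k/(n+k-1)}\,dx$. Applying coarea to the two coordinate projections, the perimeter of $E$ admits two natural lower bounds: a vertical one $\int_{\mathbf R^n-B}\mathrm{Per}_{T^k}(E_x)\,dx$ from the torus isoperimetric inequality, and a horizontal one $\int_0^{|T^k|}\mathrm{Per}_{\mathbf R^n-B}(\{u>\tau\})\,d\tau$ from the Euclidean isoperimetric inequality on $\mathbf R^n-B$ applied to the level sets of $u$. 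A dichotomy based on whether $u(x)$ is close to $|T^k|$ or uniformly small will combine these two estimates.

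The main technical difficulty lies in calibrating this dichotomy to reproduce the precise exponent $(n+k-1)/(n+k)$ together with the weight $r^{-k/(n+k-1)}$. A key auxiliary ingredient is a weighted Euclidean isoperimetric inequality on $\mathbf R^n-B$ of Caffarelli--Kohn--Nirenberg type with weight $r^{-k/(n+k-1)}$, which I expect to follow by radial symmetrization reducing to a one-dimensional Hardy-type inequality in $r$, combined with the classical spherical isoperimetric inequality. An alternative and probably cleaner route is a Minerbe-style dyadic radial decomposition: on each dyadic annulus $A_j\times T^k$ with $A_j=\{2^j\leq|x|<2^{j+1}\}$, rescaling the $\mathbf R^n$-factor to a fixed scale reduces to the classical Sobolev--trace inequality on the compact product $\tilde A\times T^k$ with $\tilde A=\{1\leq|y|<2\}$; the weight then exactly compensates the rescaling factors, and careful telescoping of the boundary trace terms — absorbing them into gradient integrals on slightly enlarged annuli rather than summing them naively — produces the global estimate without the spurious logarithmic loss in $\mathrm{diam}(\mathrm{supp}\,f)$ that the naive summation would introduce.
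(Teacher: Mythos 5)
Your second (``Minerbe-style'') route is sound and genuinely different from the paper's argument, while your primary route has an unresolved gap at its central step. The paper works entirely at the functional level: it first proves the \emph{unweighted} inequality with exponent $\frac{n+k}{n+k-1}$ for functions having zero average on every circle factor, by Gagliardo's integration-along-coordinate-lines trick (the zero-average hypothesis substitutes for compact support in the $T^k$ directions and for the free trace on $\partial B$); it then runs an induction on the number of averaged circle factors, splitting $f$ into its average over one more circle plus the oscillation, and at the endpoint (no zero-average conditions left, $f$ effectively a function of $x$ alone) it interpolates by H\"older between the unweighted Sobolev inequality on $\mathbf R^n-B$ and the Hardy inequality $\int_{\mathbf R^n-B}|g|r^{-1}\leq\frac{1}{n-1}\int|\nabla g|$; the weight $r^{-k/(n+k-1)}$ is exactly the interpolant of weight $1$ and weight $r^{-1}$ with exponents $\frac{n}{n+k}$ and $\frac{k}{n+k}$. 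Your dyadic decomposition is a discrete avatar of the same Hardy-type mechanism: the scaling check works out ($\bigl(2^{j(n-k/(n+k-1))}\bigr)^{(n+k-1)/(n+k)}=2^{j(n-1)}$ matches the rescaled gradient), the annulus averages $\bar f_j$ vanish for large $j$ by compact support, $|\bar f_j-\bar f_{j+1}|$ is controlled by $2^{-j(n-1)}$ times the gradient integral over adjacent annuli, and the resulting double sum converges geometrically because $n-1>0$ --- precisely the convergence that also powers the paper's Hardy inequality. So that route would give a complete proof, at the cost of a Poincar\'e--Sobolev inequality on the model piece $\tilde A\times T^k$ and the bookkeeping of the averages; its advantage is that it generalizes immediately to other fibred ends.

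The first route, however, is not a proof as it stands. The Federer--Fleming reduction and the two perimeter lower bounds (vertical, via the torus isoperimetric inequality on the slices $E_x$; horizontal, via the weighted relative isoperimetric inequality on $\mathbf R^n-B$ applied to the level sets of $u(x)=|E_x|$) are all fine, but the ``dichotomy based on whether $u(x)$ is close to $|T^k|$ or uniformly small'' does not close. Concretely: in the regime where most of the weighted volume comes from fibers with $u\leq|T^k|/2$, the vertical bound yields $P\gtrsim\int u^{(k-1)/k}\,\mathrm dx$, and the test configuration $u=\epsilon\chi_{B_\rho(x_0)}$ with $\epsilon\sim|T^k|$ and $\rho$ small shows $\int u^{(k-1)/k}\,\mathrm dx\sim\rho^{n}$ while $V_w^{(n+k-1)/(n+k)}\sim\rho^{\,n-n/(n+k)}\gg\rho^{n}$; one is rescued only by switching back to the horizontal bound, and which bound wins depends on how $u$ is distributed in $x$, not merely on its size. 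Calibrating this requires either a genuine interpolation between the two bounds or an iterated slicing one circle at a time --- at which point you have reconstructed the paper's induction in set-theoretic clothing. Since you explicitly defer this calibration as ``the main technical difficulty,'' the primary route should be regarded as a heuristic; I would promote the dyadic argument to the main proof and carry out the Poincar\'e inequality on $\tilde A\times T^k$ and the telescoping of the $\bar f_j$ in detail.
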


Before the proof we need the following lemma for preparation.
\begin{lemma}\label{Lem: sobolev average}
If $f$ is a function in $C_c^1((\mathbf R^n-B)\times T^k)$ satisfying
\begin{equation}\label{Eq: zero average}
\int_{\mathbf S^1(d_i)} f(x,t_1,t_2,\ldots,t_k)\,\mathrm dt_i=0,\quad i=1,2,\ldots,k,
\end{equation}
then we have
\begin{equation}\label{Eq: sobolev no weight}
 \left(\int_{(\mathbf R^n-B)\times T^k}|f|^{\frac{n+k}{n+k-1}}\,\mathrm dx\mathrm dt\right)^{\frac{n+k-1}{n+k}}\leq \int_{(\mathbf R^n-B)\times T^k}|\nabla f|\,\mathrm dx\mathrm dt.
\end{equation}
\end{lemma}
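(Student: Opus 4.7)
The plan is to follow the classical Federer--Fleming proof of the Gagliardo--Nirenberg--Sobolev inequality in $n+k$ dimensions. The compactness of support in the $\mathbf R^n$ directions and the mean-zero condition \eqref{Eq: zero average} along each $\mathbf S^1(d_j)$ factor will play symmetric roles as one-dimensional ``vanishing'' hypotheses in the $n+k$ coordinate directions, which is exactly what makes the critical Sobolev exponent $(n+k)/(n+k-1)$ come out correctly.

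First I would extend $f$ by zero across $B\times T^{k}$, which (after reducing by a standard mollification/truncation argument to the case where the support of $f$ is at positive distance from $\partial B\times T^{k}$) yields a $C_{c}^{1}$ function on $\mathbf R^{n}\times T^{k}$. Along each Euclidean direction $i=1,\dots,n$, the compact support in $x_i$ and the fundamental theorem of calculus give
\begin{equation*}
|f(x,t)|\le \int_{\mathbf R}|\partial_{x_i}f|\,\mathrm dx_i'.
\end{equation*}
Along each torus direction $j=1,\dots,k$, hypothesis \eqref{Eq: zero average} produces some $t_j^{*}\in\mathbf S^{1}(d_j)$ at which $f$ vanishes, so integration along the circle yields the analogous
\begin{equation*}
|f(x,t)|\le \int_{\mathbf S^{1}(d_j)}|\partial_{t_j}f|\,\mathrm dt_j'.
\end{equation*}

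Multiplying these $n+k$ bounds after raising each to the power $1/(n+k-1)$ gives a pointwise upper estimate for $|f|^{(n+k)/(n+k-1)}$ in which every factor depends on exactly $n+k-1$ of the coordinates. The core step is then the Loomis--Whitney inequality (equivalently, an iterated H\"older inequality with exponent $n+k-1$), which applies verbatim on the product measure space $\mathbf R^{n}\times T^{k}$ by exactly the same argument as on $\mathbf R^{n+k}$; it yields
\begin{equation*}
\int_{\mathbf R^{n}\times T^{k}}|f|^{(n+k)/(n+k-1)}\,\mathrm dx\,\mathrm dt\le \prod_{l=1}^{n+k}\|\partial_l f\|_{L^{1}(\mathbf R^{n}\times T^{k})}^{1/(n+k-1)}.
\end{equation*}
Taking the $(n+k-1)/(n+k)$-th root and bounding each $\|\partial_l f\|_{L^{1}}$ by $\|\nabla f\|_{L^{1}}$ then gives \eqref{Eq: sobolev no weight} with constant one.

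The conceptual point is that the mean-zero hypothesis on each circle factor is the exact substitute for the compact support hypothesis in the torus directions that the $L^{1}$-Sobolev proof would otherwise require. I do not anticipate any serious obstacle; the only mildly delicate step is the behaviour at $\partial B\times T^{k}$, which is entirely routine via mollification, and the verification that Loomis--Whitney transfers to the product space $\mathbf R^{n}\times T^{k}$, which is immediate from its one-variable-at-a-time Fubini/H\"older derivation.
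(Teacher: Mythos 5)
Your proposal is correct in substance and is essentially the paper's own proof: the paper also extends $f$ by zero into $B\times T^k$, uses compact support in the $n$ Euclidean directions and the mean-zero hypothesis \eqref{Eq: zero average} to get the one-dimensional bound $|f|\le\int_{I_j}|\partial_j f|$ in each of the $n+k$ directions, and then runs the Federer--Fleming/Loomis--Whitney iterated H\"older argument on the product $\mathbf R^n\times[0,d_1]\times\cdots\times[0,d_k]$.

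One small caveat about your treatment of the inner boundary: since $f$ is explicitly allowed to be nonzero on $\partial B\times T^k$, you cannot ``reduce by truncation to the case where the support is at positive distance from $\partial B\times T^k$'' --- a cutoff supported in an $\epsilon$-collar contributes $\int |f|\,|\nabla\chi_\epsilon|$, which converges to a surface term on $\partial B\times T^k$ rather than to zero. The correct (and still routine) fix is to extend by zero directly, accept that the extension is discontinuous across $\partial B$ (as the paper does), and observe that the one-dimensional inequality survives: for any point with $x\notin B$ and any Euclidean coordinate direction, convexity of $B$ implies that at least one of the two rays from $x$ along that axis avoids $B$ entirely, so the fundamental theorem of calculus applies along that ray and is dominated by the full line integral of $|\partial_j f|$ (with the gradient set to zero inside $B$).
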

\begin{proof}
The proof is standard and it can be found in \cite{Evans2010}. Here we just include the arguments for completeness. First let us do zero extension of $f$ and $\nabla f$ into $B\times T^k$. Although this breaks the continuity of these functions, it doesn't matter in our discussion below. For convenience, we view the extended $f$ and $\nabla f$ as functions over $\mathbf R^n\times [0,d_1]\times\cdots\times [0,d_k]$, and we denote
\begin{equation*}
I_j=\left\{
\begin{array}{cc}
             \mathbf R, & 1\leq j\leq n; \\
             {[0,d_{j-n}]}, & n+1\leq j\leq n+k.
           \end{array}
\right.
\end{equation*}
Since $f$ has compact support and satisfies \eqref{Eq: zero average}, it holds for any $1\leq j\leq n+k$ that
\begin{equation}\label{Eq: basic inequality}
|f(x_1,\ldots, x_{n+k})|\leq \int_{I_j}|\nabla f(x_1,\ldots,y_j,\ldots,x_{n+k})|\mathrm dy_j.
\end{equation}
As a result, we have
$$
|f(x_1,\ldots, x_{n+k})|^{\frac{n+k}{n+k-1}}\leq \prod_{j=1}^{n+k}\left(\int_{I_j}|\nabla f(x_1,\ldots,y_j,\ldots,x_{n+k})|\mathrm dy_j\right)^{\frac{1}{n+k-1}}.
$$
Integrating this inequality over $I_1$ and applying the H\"older inequality, we can obtain
\begin{equation*}
\begin{split}
\int_{I_1}|f(x_1,\ldots, x_{n+k})&|^{\frac{n+k}{n+k-1}}\mathrm dx_1
\leq\left(\int_{I_1}|\nabla f(x_1,\ldots,y_j,\ldots,x_{n+k})|\mathrm dy_1\right)^{\frac{1}{n+k-1}}\\
&\cdot \prod_{j=2}^{n+k}\left(\int_{I_j}\int_{I_1}|\nabla f(x_1,\ldots,y_j,\ldots,x_{n+k})|\mathrm dy_j\mathrm dx_1\right)^{\frac{1}{n+k-1}}.
\end{split}
\end{equation*}
Now we continue to integrate above inequality over the rest $I_j$ step by step with the use of the H\"older inequality. Eventually we will arrive at
\begin{equation*}
\begin{split}
\left(\int_{\mathbf R^n\times [0,d_1]\times\cdots\times [0,d_k]}|f|^{\frac{n+k}{n+k-1}}\,\mathrm dx\right)^{\frac{n+k-1}{n+k}}\leq \int_{\mathbf R^n\times [0,d_1]\times\cdots\times [0,d_k]}|\nabla f|\,\mathrm dx,
\end{split}
\end{equation*}
which is exactly the desired inequality \eqref{Eq: sobolev no weight}.
\end{proof}

Next we are going to prove Proposition \ref{Prop: Sobolev} inductively with an averaging trick. In fact, we can establish the following more general result.
\begin{proposition}
If $f$ is a function in $C_c^1((\mathbf R^n-B)\times T^k)$ satisfying
\begin{equation*}
\int_{\mathbf S^1(d_i)} f(x,t_1,t_2,\ldots,t_k)\,\mathrm dt_i=0,\quad i=1,2,\ldots,l,
\end{equation*}
with $0\leq l\leq k$, then there is a universal constant $C$ depending only on $n$, $k$, $l$ and $d_i$, $i=1,2,\ldots,k$, such that
\begin{equation}
 \left(\int_{(\mathbf R^n-B)\times T^k}|f|^{\frac{n+k}{n+k-1}}r^{\frac{l-k}{n+k-1}}\,\mathrm dx\mathrm dt\right)^{\frac{n+k-1}{n+k}}\leq C\int_{(\mathbf R^n-B)\times T^k}|\nabla f|\,\mathrm dx\mathrm dt.
\end{equation}
\end{proposition}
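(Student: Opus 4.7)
The proof proceeds by downward induction on $l$ (from $l=k$ down to $l=0$), with the base case $l=k$ given directly by Lemma \ref{Lem: sobolev average} (the weight reduces to $r^0=1$ and the stated inequality is precisely \eqref{Eq: sobolev no weight}). For the inductive step, given $f\in C_c^1((\mathbf R^n-B)\times T^k)$ with zero averages in $t_1,\ldots,t_{l-1}$, I would decompose $f=g+\bar g$ where
$$\bar g(x,t_1,\ldots,\hat t_l,\ldots,t_k)=\frac{1}{d_l}\int_{\mathbf S^1(d_l)}f\,\mathrm dt_l,\qquad g=f-\bar g.$$
Then $g$ has zero averages in $t_1,\ldots,t_l$, so the inductive hypothesis applies to $g$. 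Combined with the elementary bound $\|\nabla\bar g\|_{L^1}\leq\|\nabla f\|_{L^1}$ (hence $\|\nabla g\|_{L^1}\leq 2\|\nabla f\|_{L^1}$) obtained by Jensen-type inequalities in $t_l$, this controls $\|g\|_{L^p(r^{(l-k)/(n+k-1)})}$ by $C\|\nabla f\|_{L^1}$, where $p=(n+k)/(n+k-1)$. Since $r\geq 1$ on the domain forces $r^{(l-1-k)/(n+k-1)}\leq r^{(l-k)/(n+k-1)}$, the same bound holds with the target weight $r^{(l-1-k)/(n+k-1)}$.

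The delicate part is the contribution of $\bar g$. Since $\bar g$ is independent of $t_l$, one views it as a function on $(\mathbf R^n-B)\times T^{k-1}$ with zero averages in $t_1,\ldots,t_{l-1}$, and factors out the trivial $t_l$-integration. This reduces matters to an analogous weighted Sobolev inequality on $(\mathbf R^n-B)\times T^{k-1}$, but now with exponent $p=(n+k)/(n+k-1)$ and weight $r^{(l-1-k)/(n+k-1)}$ corresponding to ambient dimension $n+k$ rather than the natural $n+k-1$. I would run a nested induction on $k$: the inductive hypothesis on $T^{k-1}$ controls $\|\bar g\|_{L^{(n+k-1)/(n+k-2)}(r^{(l-k)/(n+k-2)})}$ by $\|\nabla\bar g\|_{L^1}$, and one converts this to the target $L^p$ bound by a H\"older interpolation between the two weighted Lebesgue spaces. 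A direct computation shows that the auxiliary integral generated by this H\"older step reduces on $(\mathbf R^n-B)\times T^{k-1}$ to $\int_1^{\infty}r^{-l-1}\,\mathrm dr$, which converges exactly when $l\geq 1$, so the nested induction cleanly handles all pairs with $l\geq 1$.

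The main obstacle is therefore the borderline case $l=0$, where the H\"older conversion fails by a logarithmic divergence. To treat $l=0$ I would argue directly by a further induction on $k$, with base case $k=0$ the classical Sobolev inequality $\|u\|_{L^{n/(n-1)}}\leq C\|\nabla u\|_{L^1}$ on $\mathbf R^n-B$ (established by reflection/inversion extension across $\partial B$). At each inductive step for $l=0$, one again writes $f=g+\bar g$ with $\bar g$ the average over, say, $t_1$; the piece $g$ is treated by the already-proved $(k,1)$ case, and the piece $\bar g$ requires upgrading the lower-dimensional $L^{(n+k-1)/(n+k-2)}$ bound to the target $L^{(n+k)/(n+k-1)}$ bound, which I would accomplish by the dual Hardy inequality applied in the radial direction of $\mathbf R^n-B$. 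This radial argument is scale-compatible with the weight $r^{-k/(n+k-1)}$ precisely because this weight is the unique one making the inequality scale-invariant, as confirmed by the change of variables $s=r^{(n-1)/(n+k-1)}$ which converts the weighted radial measure to the standard radial measure in dimension $n+k$; this scaling compatibility ensures that the constants behave uniformly in the support of $f$.
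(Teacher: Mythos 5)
Your proposal is correct and follows essentially the same strategy as the paper's proof: induct on $k-l$ starting from the zero-average Sobolev inequality of Lemma \ref{Lem: sobolev average}, decompose $f$ into its average over one circle factor and a zero-mean remainder, handle the remainder directly by the inductive hypothesis using $r\geq 1$, and upgrade the lower-dimensional estimate for the average by a H\"older argument whose auxiliary radial integral converges exactly when $l\geq 1$, falling back on the Hardy inequality $\int_{\mathbf R^n-B}|g|\,r^{-1}\leq\tfrac{1}{n-1}\int_{\mathbf R^n-B}|\nabla g|$ at the borderline $l=0$. The only cosmetic difference is organizational: the paper's $l=0$ case iterates the averaging until only the $x$-dependence remains and then applies the $n$-dimensional Sobolev inequality together with Hardy once, whereas you package the identical reduction as a nested induction on $k$ invoking the Hardy/H\"older interpolation at each step (and use a reflection/inversion extension in place of the paper's zero extension for the $k=0$ base case), all of which amounts to the same computation.
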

\begin{proof}
The proof will be completed from an induction on the difference $k-l$. It follows from Lemma \ref{Lem: sobolev average} that above proposition holds when $k-l=0$. From induction we can assume that above proposition holds when $k-l=m$ and then deal with the case $k-l=m+1$. For any $f$ in $C_c^1((\mathbf R^n-B)\times T^k)$ satisfying
\begin{equation*}
\int_{\mathbf S^1(d_i)} f(x,t_1,t_2,\ldots,t_k)\,\mathrm dt_i=0,\quad i=1,2,\ldots,l,
\end{equation*}
we write it as
$
f=f_1+f_2
$
with
$$
f_1=\frac{1}{d_{l+1}}\int_{\mathbf S^1(d_{l+1})}f(x,t_1,t_2,\ldots,t_k)\,\mathrm dt_{l+1}.
$$
Clearly, $f_2$ is a function in $C_c^1((\mathbf R^n-B)\times T^{k-1})$ satisfying
\begin{equation*}
\int_{\mathbf S^1(d_i)} f_2(x,t_1,t_2,\ldots,t_k)\,\mathrm dt_i=0,\quad i=1,2,\ldots,l+1.
\end{equation*}
Notice that the gradients of $f_1$ and $f_2$ are both controlled by the gradient of $f$. So we have
\begin{equation}\label{Eq: estimate f2}
\begin{split}
& \left(\int_{(\mathbf R^n-B)\times T^k}|f_2|^{\frac{n+k}{n+k-1}}r^{-\frac{m+1}{n+k-1}}\,\mathrm dx\mathrm dt\right)^{\frac{n+k-1}{n+k}}\\\leq& \left(\int_{(\mathbf R^n-B)\times T^k}|f_2|^{\frac{n+k}{n+k-1}}r^{-\frac{m}{n+k-1}}\,\mathrm dx\mathrm dt\right)^{\frac{n+k-1}{n+k}} \\
\leq&C\int_{(\mathbf R^n-B)\times T^k}|\nabla f_2|\,\mathrm dx\mathrm dt
\leq  C\int_{(\mathbf R^n-B)\times T^k}|\nabla f|\,\mathrm dx\mathrm dt.
\end{split}
\end{equation}
Here and in the sequel, the symbol $C$ is always denoted to be a universal constant depending only on $n$, $k$, $l$ and $d_i$, $i=1,2,\ldots,k$.
On the other hand, the function $f_1$ can be viewed as a function in $C_c^1((\mathbf R^n-B)\times T^{k-1})$ and it satisfies
\begin{equation*}
\int_{\mathbf S^1(d_i)} f_1(x,t_1,t_2,\ldots,t_{k-1})\,\mathrm dt_i=0,\quad i=1,2,\ldots,l.
\end{equation*}
From inductive assumption we see
\begin{equation*}
\int_{(\mathbf R^n-B)\times T^k}|f_1|^{\frac{n+k}{n+k-1}}r^{-\frac{m+1}{n+k-1}}\mathrm dx\mathrm dt
= d_{l+1}\int_{(\mathbf R^n-B)\times T^{k-1}}|f_1|^{\frac{n+k}{n+k-1}}r^{-\frac{m+1}{n+k-1}}\mathrm dx\mathrm dt.
\end{equation*}

Now we have to divide the discussion into two cases:

{\it Case 1.} $l>0$. Since we have $m<k-1$ in this case, it follows from H\"older inequality that
\begin{equation*}
\begin{split}
&\left(\int_{(\mathbf R^n-B)\times T^{k-1}}|f_1|^{\frac{n+k}{n+k-1}}r^{-\frac{m+1}{n+k-1}}\mathrm dx\mathrm dt\right)^{\frac{n+k-1}{n+k}}\\
\leq&\left(\int_{(\mathbf R^n-B)\times T^{k-1}}|f_1|^{\frac{n+k-1}{n+k-2}}r^{-\frac{m}{n-k-2}}\mathrm dx\mathrm d t\right)^{\frac{n+k-2}{n+k-1}}\\
&\qquad\qquad\cdot\left(\int_{(\mathbf R^n-B)\times T^{k-1}}r^{-n-k+m+1}\mathrm dx\mathrm dt\right)^{\frac{1}{(n+k)(n+k-1)}}\\
\leq & C\int_{(\mathbf R^n-B)\times T^{k-1}}|\nabla f_1|\mathrm dx\mathrm dt
\leq  C\int_{(\mathbf R^n-B)\times T^{k}}|\nabla f_1|\mathrm dx\mathrm dt.
\end{split}
\end{equation*}
This yields
$$
\left(\int_{(\mathbf R^n-B)\times T^k}|f_1|^{\frac{n+k}{n+k-1}}r^{-\frac{m+1}{n+k-1}}\mathrm dx\mathrm dt\right)^{\frac{n+k-1}{n+k}}\leq C\int_{(\mathbf R^n-B)\times T^k}|\nabla f|\mathrm dx\mathrm dt.
$$

{\it Case 2.} $l=0$. A more delicate analysis is involved in this case since $r^{-n}$ is not integrable in $\mathbf R^n-B$. The basic idea is to do further averaging procedure to $f_1$. Write $f_1=f_{11}+f_{12}$ with
$$
f_{11}=\frac{1}{d_{l+2}}\int_{\mathbf S^1(d_{l+2})}f_1(x,t_1,\cdots,t_k)\mathrm dt_{l+2}.
$$
It is easy to check that we can obtain
$$
\left(\int_{(\mathbf R^n-B)\times T^k}|f_{12}|^{\frac{n+k}{n+k-1}}r^{-\frac{k}{n+k-1}}\mathrm dx\mathrm dt\right)^{\frac{n+k-1}{n+k}}\leq C\int_{(\mathbf R^n-B)\times T^k}|\nabla f_{12}|\mathrm dx\mathrm dt
$$
with the help of the inductive assumption and the H\"older inequality. However, the critical exponent appears when we try to deal with the function $f_{11}$. So we continue to do averaging procedure to $f_{11}$, and finally we reduce the desired estimate to a function $\bar f$ in $C_c^1((\mathbf R^n-B)\times T^k)$ satisfying
$$
\bar f(x,t_1,\ldots,t_k)=g(x).
$$
It suffices to show
$$
\left(\int_{\mathbf R^n-B}|g|^{\frac{n+k}{n+k-1}}r^{-\frac{k}{n+k-1}}\mathrm dx\right)^{\frac{n+k-1}{n+k}}\leq C\int_{\mathbf R^n-B}|\nabla g|\,\mathrm dx.
$$
Actually we have the following Hardy inequality
\begin{equation}\label{Eq: hardy}
\int_{\mathbf R^n-B}|g|r^{-1}\,\mathrm dx\leq \frac{1}{n-1}\int_{\mathbf R^n-B}|\nabla g|\,\mathrm dx.
\end{equation}
To see this, let us do some calculation.
\begin{equation*}
\begin{split}
\int_{\mathbf R^n-B}|g|r^{-1}\,\mathrm dx&=\int_{\mathbf S^{n-1}}\mathrm d\sigma \int_1^{+\infty}|g|r^{n-2}\,\mathrm dr\\
&\leq\int_{\mathbf S^{n-1}}\mathrm d\sigma \int_1^{+\infty}\mathrm dr\int_{r}^{+\infty}|\nabla g|(s,\theta)r^{n-2}\,\mathrm ds\\
&=\int_{\mathbf S^{n-1}}\mathrm d\sigma \int_1^{+\infty}\mathrm ds\int_{1}^s|\nabla g|(s,\theta)r^{n-2}\,\mathrm dr\\
&\leq\frac{1}{n-1}\int_{\mathbf S^{n-1}}\mathrm d\sigma\int_1^{+\infty}|\nabla g|(s,\theta)s^{n-1}\,\mathrm ds\\
&=
\frac{1}{n-1}\int_{\mathbf R^n-B}|\nabla g|\,\mathrm dx.
\end{split}
\end{equation*}
Now the desired inequality comes from a simple interpolation of \eqref{Eq: sobolev no weight} and \eqref{Eq: hardy}. Namely, we have
\begin{equation*}
\begin{split}
&\left(\int_{\mathbf R^n-B}|g|^{\frac{n+k}{n+k-1}}r^{-\frac{k}{n+k-1}}\mathrm dx\right)^{\frac{n+k-1}{n+k}}\\
\leq &\left(\int_{\mathbf R^n-B}|g|^{\frac{n}{n-1}}\mathrm dx\right)^{\frac{n}{n+k}}\left(\int_{\mathbf R^n-B}|g|r^{-1}\,\mathrm dx\right)^{\frac{k}{n+k}}\\
\leq &C\int_{\mathbf R^n-B}|\nabla g|\,\mathrm dx.
\end{split}
\end{equation*}
Now we complete the proof.
\end{proof}

For further generalization of Proposition \ref{Prop: Sobolev}, we point out that the validity of Proposition \ref{Prop: Sobolev} essentially relies on
\begin{itemize}
\item[(i)] the no-weighted Sobolev inequality
\begin{equation*}
 \left(\int_{(\mathbf R^n-B)\times T^k}|f|^{\frac{n+k}{n+k-1}}\,\mathrm dx\mathrm dt\right)^{\frac{n+k-1}{n+k}}\leq \int_{(\mathbf R^n-B)\times T^k}|\nabla f|\,\mathrm dx\mathrm dt
\end{equation*}
for all $f$ in $C_c^1((\mathbf R^n-B)\times T^k)$ with
$$
\int_{\mathbf S^1(d_i)} f(x,t_1,t_2,\ldots,t_k)\,\mathrm dt_i=0,\quad i=1,2,\ldots,k.
$$
\item[(ii)] the Hardy inequality
$$
\int_{\mathbf R^n-B}|f|r^{-1}\mathrm dx\leq \frac{1}{n-1}\int_{\mathbf R^n-B}|\nabla f|\,\mathrm dx
$$
for all $f$ in $C_c^1(\mathbf R^n-B)$.
\end{itemize}

It is easy to generalize above inequalities to $L^p$-version. Namely we have
\begin{lemma}
If $f$ is a function in $C_c^1((\mathbf R^n-B)\times T^k)$ satisfying
\begin{equation*}
\int_{\mathbf S^1(d_i)} f(x,t_1,t_2,\ldots,t_k)\,\mathrm dt_i=0,\quad i=1,2,\ldots,k,
\end{equation*}
then for any $1\leq p<n+k$ there is a universal constant $C$ depending only on $n$, $k$, $p$ and $d_i$, $i=1,2,\ldots,k$, such that
\begin{equation*}
 \left(\int_{(\mathbf R^n-B)\times T^k}|f|^{\frac{(n+k)p}{n+k-p}}\,\mathrm dx\mathrm dt\right)^{\frac{n+k-p}{(n+k)p}}\leq C\left(\int_{(\mathbf R^n-B)\times T^k}|\nabla f|^p\,\mathrm dx\mathrm dt\right)^{\frac{1}{p}}.
\end{equation*}
\end{lemma}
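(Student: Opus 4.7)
The plan is to reduce the $L^p$ inequality ($1<p<n+k$) to the $L^1$ inequality of Lemma \ref{Lem: sobolev average} by the classical substitution $u=|f|^\gamma$ with a suitably chosen exponent $\gamma>1$, followed by Hölder's inequality. The case $p=1$ is of course exactly Lemma \ref{Lem: sobolev average}, so from now on we fix $p\in(1,n+k)$ and set
$$
\gamma=\frac{(n+k-1)p}{n+k-p}>1,\qquad q=\frac{(n+k)p}{n+k-p},
$$
noting the two identities $\gamma\cdot\frac{n+k}{n+k-1}=q$ and $(\gamma-1)\cdot\frac{p}{p-1}=q$ which are engineered to make the subsequent Hölder step close up.

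The first step is to apply the $L^1$ inequality \eqref{Eq: sobolev no weight} to $u=|f|^\gamma$. Strictly speaking $u$ does not satisfy the zero-average hypothesis of Lemma \ref{Lem: sobolev average}, but inspection of that lemma's proof shows the zero-average assumption on $f$ was only used to guarantee the pointwise bound \eqref{Eq: basic inequality}, which in turn followed from the existence of a zero of $f$ on each circle slice $\mathbf{S}^1(d_i)$ (obtained from the intermediate value theorem applied to a continuous function with vanishing integral). Since $|f|^\gamma$ vanishes wherever $f$ does, this pointwise bound is inherited by $|f|^\gamma$, and the remainder of the proof of Lemma \ref{Lem: sobolev average} (iterated H\"older across the coordinate factors) goes through verbatim. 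The result is
$$
\left(\int_{(\mathbf R^n-B)\times T^k}|f|^{q}\,\mathrm dx\mathrm dt\right)^{\!\frac{n+k-1}{n+k}}\le\int_{(\mathbf R^n-B)\times T^k}\bigl|\nabla(|f|^{\gamma})\bigr|\,\mathrm dx\mathrm dt.
$$

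The second step is standard: $\bigl|\nabla(|f|^\gamma)\bigr|\le\gamma|f|^{\gamma-1}|\nabla f|$ almost everywhere, so Hölder's inequality with conjugate exponents $p/(p-1)$ and $p$ gives
$$
\int\bigl|\nabla(|f|^\gamma)\bigr|\,\mathrm dx\mathrm dt\le \gamma\left(\int |f|^{(\gamma-1)p/(p-1)}\right)^{\!\frac{p-1}{p}}\!\left(\int|\nabla f|^p\right)^{\!\frac{1}{p}}=\gamma\left(\int|f|^{q}\right)^{\!\frac{p-1}{p}}\!\left(\int|\nabla f|^p\right)^{\!\frac{1}{p}}.
$$
Combining the two displays and dividing by the common factor $(\int|f|^q)^{(p-1)/p}$, the remaining exponent on the left is $\frac{n+k-1}{n+k}-\frac{p-1}{p}=\frac{n+k-p}{(n+k)p}$, which is precisely the exponent demanded in the conclusion, with explicit constant $C=\gamma$ (times the constant from Lemma \ref{Lem: sobolev average}, which is $1$).

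The only technical wrinkle is that $|f|^\gamma$ need not be $C^1$ when $\gamma$ is non-integer and $f$ has zeros. This is handled in the standard way by replacing $|f|^\gamma$ with $(\varepsilon^2+f^2)^{\gamma/2}-\varepsilon^{\gamma}$, which is smooth, still vanishes wherever $f$ vanishes on the circle slices (after subtracting the constant $\varepsilon^\gamma$ to restore compact support or, more simply, by working on a compact exhaustion and passing to the limit), applying the two steps above to this regularization, and letting $\varepsilon\to 0^+$ via dominated convergence. This approximation, rather than any conceptual obstruction, is the only subtlety I expect; the algebraic choice of $\gamma$ and the structural step of porting Lemma \ref{Lem: sobolev average} to $|f|^\gamma$ carry the proof.
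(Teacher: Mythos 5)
Your proposal is correct and follows essentially the same route as the paper: apply the $L^1$ inequality \eqref{Eq: sobolev no weight} to $|f|^{\gamma}$ with $\gamma=\frac{(n+k-1)p}{n+k-p}$ (justified by the observation that the zero-average hypothesis is only used to produce a zero of $f$ on each circle slice, a property inherited by $|f|^\gamma$), then close the estimate with H\"older, obtaining the constant $\gamma$. Your added remark on regularizing $|f|^\gamma$ when it fails to be $C^1$ is a reasonable technical supplement that the paper leaves implicit.
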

\begin{proof}
The proof is standard. First we point out that the key to inequality \eqref{Eq: sobolev no weight} is the validity of \eqref{Eq: basic inequality}, which only requires that $f$ has zero in all $\mathbf S^1$ slices. Therefore, if $f$ is a function in $C_c^1((\mathbf R^n-B)\times T^k)$ satisfying
\begin{equation*}
\int_{\mathbf S^1(d_i)} f(x,t_1,t_2,\ldots,t_k)\,\mathrm dt_i=0,\quad i=1,2,\ldots,k,
\end{equation*}
then \eqref{Eq: sobolev no weight} can be applied to the function $|f|^{\frac{p(n+k-1)}{n+k-p}}$. Then we have
\begin{equation*}
\begin{split}
&\left(\int_{(\mathbf R^n-B)\times T^k}|f|^{\frac{p(n+k)}{n+k-p}}\,\mathrm dx\mathrm dt\right)^{\frac{n+k-1}{n+k}}\\
\leq &\frac{p(n+k-1)}{n+k-p}\int_{(\mathbf R^n-B)\times T^k}|f|^{\frac{(n+k)(p-1)}{n+k-p}}|\nabla f|\,\mathrm dx\mathrm dt\\
\leq &\frac{p(n+k-1)}{n+k-p}\left(\int_{(\mathbf R^n-B)\times T^k}|f|^{\frac{p(n+k)}{n+k-p}}\,\mathrm dx\mathrm dt\right)^{\frac{p-1}{p}}\left(\int_{(\mathbf R^n-B)\times T^k}|\nabla f|^p\,\mathrm dx\mathrm dt\right)^{\frac{1}{p}}.
\end{split}
\end{equation*}
This gives
\begin{equation*}
 \left(\int_{(\mathbf R^n-B)\times T^k}|f|^{\frac{(n+k)p}{n+k-p}}\,\mathrm dx\mathrm dt\right)^{\frac{n+k-p}{(n+k)p}}\leq \frac{p(n+k-1)}{n+k-p}\left(\int_{(\mathbf R^n-B)\times T^k}|\nabla f|^p\,\mathrm dx\mathrm dt\right)^{\frac{1}{p}}.
\end{equation*}
This is just the desired inequality.
\end{proof}
\begin{lemma}
Let $f$ be a function in $C_c^1(\mathbf R^n-B)$. Then for any $1\leq p<n$ we have
$$
\int_{\mathbf R^n-B}|f|^pr^{-p}\,\mathrm dx\leq\left(\frac{p}{n-p}\right)^{p}\int_{\mathbf R^n-B}|\nabla f|^p.
$$
\end{lemma}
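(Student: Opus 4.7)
The plan is to mimic the argument already used to establish the $p=1$ Hardy inequality \eqref{Eq: hardy} in the proof of Proposition \ref{Prop: Sobolev}. Writing integrals in polar coordinates $x=(r,\theta)\in[1,\infty)\times\mathbf S^{n-1}$ and using that $f$ is compactly supported, the fundamental theorem of calculus applied to $|f|^p$ gives
$$
|f(r,\theta)|^p\leq p\int_r^{+\infty}|f|^{p-1}(s,\theta)\,|\nabla f|(s,\theta)\,\mathrm ds,
$$
without any boundary term at $r=1$. Multiplying by $r^{n-p-1}$, integrating over $r\in[1,+\infty)$ and $\theta\in\mathbf S^{n-1}$, and switching the order of integration via Fubini yields
$$
\int_{\mathbf R^n-B}|f|^p r^{-p}\,\mathrm dx\leq p\int_{\mathbf S^{n-1}}\mathrm d\sigma\int_1^{+\infty}|f|^{p-1}(s,\theta)\,|\nabla f|(s,\theta)\int_1^s r^{n-p-1}\,\mathrm dr\,\mathrm ds.
$$

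The inner integral equals $(s^{n-p}-1)/(n-p)$, where the hypothesis $p<n$ is exactly what makes this bounded by $s^{n-p}/(n-p)$. Converting back to Cartesian coordinates by $s^{n-p}\,\mathrm ds\,\mathrm d\sigma=s^{1-p}\,\mathrm dx$ gives
$$
\int_{\mathbf R^n-B}|f|^p r^{-p}\,\mathrm dx\leq\frac{p}{n-p}\int_{\mathbf R^n-B}|f|^{p-1}|\nabla f|\,r^{1-p}\,\mathrm dx.
$$

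Finally, applying H\"older's inequality with exponents $p/(p-1)$ and $p$ to the factors $|f|^{p-1}r^{-(p-1)}$ and $|\nabla f|$ gives
$$
\int_{\mathbf R^n-B}|f|^p r^{-p}\,\mathrm dx\leq\frac{p}{n-p}\left(\int_{\mathbf R^n-B}|f|^p r^{-p}\,\mathrm dx\right)^{\frac{p-1}{p}}\left(\int_{\mathbf R^n-B}|\nabla f|^p\,\mathrm dx\right)^{\frac{1}{p}}.
$$
Dividing by the common factor (which is legal after a standard truncation/approximation argument in case it vanishes) and raising to the $p$-th power yields the claimed inequality. The only real point to watch is the exponent bookkeeping: one needs $p<n$ both to make $r^{n-p-1}$ integrable away from infinity on $[1,s]$ and to get a positive constant $p/(n-p)$; the boundary term at $\partial B$ never appears because the argument only uses vanishing at infinity, exactly as in the $p=1$ case.
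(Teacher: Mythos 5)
Your proof is correct and follows essentially the same route as the paper: polar coordinates, the fundamental theorem of calculus applied to $|f|^p$ using vanishing at infinity, Fubini to swap the order of the $r$ and $s$ integrations, the bound $\int_1^s r^{n-p-1}\,\mathrm dr\leq s^{n-p}/(n-p)$, and finally H\"older with exponents $p/(p-1)$ and $p$ followed by absorbing the common factor. Your remarks on the exponent bookkeeping and on why no boundary term appears at $\partial B$ are accurate and match the paper's computation.
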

\begin{proof}
The proof comes from a direct calculation. We have
\begin{equation*}
\begin{split}
\int_{\mathbf R^n-B}|f|^pr^{-p}\,\mathrm dx&=\int_{\mathbf S^{n-1}}\mathrm d\sigma \int_1^{+\infty}|f|^pr^{n-1-p}\,\mathrm dr\\
&\leq p\int_{\mathbf S^{n-1}}\mathrm d\sigma \int_1^{+\infty}\mathrm dr\int_{r}^{+\infty}|f|^{p-1}(s,\theta)|\nabla f|(s,\theta)r^{n-1-p}\,\mathrm ds\\
&=p\int_{\mathbf S^{n-1}}\mathrm d\sigma \int_1^{+\infty}\mathrm ds\int_{1}^s|f|^{p-1}(s,\theta)|\nabla f|(s,\theta)r^{n-1-p}\,\mathrm dr\\
&\leq\frac{p}{n-p}\int_{\mathbf S^{n-1}}\mathrm d\sigma\int_1^{+\infty}|f|^{p-1}(s,\theta)|\nabla f|(s,\theta)s^{n-p}\,\mathrm ds\\
&=
\frac{p}{n-p}\left(\int_{\mathbf S^{n-1}}\mathrm d\sigma\int_1^{+\infty}|f|^{p}(s,\theta)s^{n-1-p}\,\mathrm ds\right)^{\frac{p-1}{p}}\\
&\quad\cdot\left(\int_{\mathbf S^{n-1}}\mathrm d\sigma\int_1^{+\infty}|\nabla f|^{p}(s,\theta)s^{n-1}\,\mathrm ds\right)^{\frac{1}{p}}\\
&=\frac{p}{n-p}\left(\int_{\mathbf R^n-B}|f|^pr^{-p}\,\mathrm dx\right)^{\frac{p-1}{p}}\left(\int_{\mathbf R^n-B}|\nabla f|^p\,\mathrm dx\right)^{\frac{1}{p}}.
\end{split}
\end{equation*}
This gives the desired inequality.
\end{proof}
Combining above two lemmas with the averaging trick, we can obtain the following
\begin{proposition}\label{Prop: Sobolev Lp}
For any $1\leq p<n$ there is a universal constant $C$ depending only on $n$, $k$, $p$ and $d_i$, $i=1,2,\ldots,k$, such that the inequality
\begin{equation*}
    \left(\int_{(\mathbf R^n-B)\times T^k}|f|^{\frac{(n+k)p}{n+k-p}}r^{\frac{-kp}{n+k-p}}\,\mathrm dx\mathrm dt\right)^{\frac{n+k-p}{(n+k)p}}\leq C\left(\int_{(\mathbf R^n-B)\times T^k}|\nabla f|^p\,\mathrm dx\mathrm dt\right)^{\frac{1}{p}}
\end{equation*}
holds for any $f$ in $C_c^1((\mathbf R^n-B)\times T^k)$. Here the function $f$ is allowed to take non-zero values on the inner boundary $\partial B\times T^k$.
\end{proposition}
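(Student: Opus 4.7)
The plan is to mirror the proof of Proposition \ref{Prop: Sobolev} step by step, using the two preceding lemmas as the $L^p$ replacements for the no-weighted Sobolev inequality (ingredient (i)) and the Hardy inequality (ingredient (ii)) that powered the $L^1$ argument. Accordingly, I would first formulate the stronger intermediate statement parametrized by $0\leq l\leq k$: for any $f\in C_c^1((\mathbf R^n-B)\times T^k)$ with vanishing $\mathbf S^1(d_i)$-slice average for $i=1,\ldots,l$,
\begin{equation*}
\left(\int_{(\mathbf R^n-B)\times T^k}|f|^{\frac{(n+k)p}{n+k-p}}r^{-\frac{(k-l)p}{n+k-p}}\,\mathrm dx\mathrm dt\right)^{\frac{n+k-p}{(n+k)p}}\leq C\left(\int_{(\mathbf R^n-B)\times T^k}|\nabla f|^p\,\mathrm dx\mathrm dt\right)^{\frac{1}{p}}.
\end{equation*}
The proposition is then the $l=0$ case. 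The argument proceeds by induction on $k-l$, and the base case $l=k$ is exactly the first of the two preceding lemmas ($L^p$-Sobolev for functions with zero slice-averages).

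For the inductive step, I would decompose $f=f_1+f_2$ where $f_1=d_{l+1}^{-1}\int_{\mathbf S^1(d_{l+1})}f\,\mathrm dt_{l+1}$. The remainder $f_2$ has one additional zero-average, so the inductive hypothesis applied to $f_2$ gives the estimate with weight $r^{-(k-l-1)p/(n+k-p)}$. Since $r\geq 1$ on $\mathbf R^n-B$, this weight dominates pointwise the target weight $r^{-(k-l)p/(n+k-p)}$, so the desired bound for $f_2$ is immediate once the gradient $|\nabla f_2|$ is controlled by $|\nabla f|$.

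The function $f_1$ is independent of $t_{l+1}$ and may be regarded as a function on $(\mathbf R^n-B)\times T^{k-1}$ with $l$ zero-averages. When $l>0$, I would apply the inductive hypothesis in dimension $k'=k-1$ and then use Hölder's inequality with the integrable factor $r^{-n-k+(k-l-1)+1}$ on $\mathbf R^n-B$ (exactly as in the original proof) to convert the resulting exponent into the target one. The delicate case is $l=0$, where the critical integrand $r^{-n}$ is not integrable on $\mathbf R^n-B$ and Hölder alone is insufficient. Here I would iterate the averaging procedure on the remaining circles until $f_1$ is reduced to a radial-type function $g(x)$ of the Euclidean variable alone, for which it suffices to prove
\begin{equation*}
\left(\int_{\mathbf R^n-B}|g|^{\frac{(n+k)p}{n+k-p}}r^{-\frac{kp}{n+k-p}}\,\mathrm dx\right)^{\frac{n+k-p}{(n+k)p}}\leq C\left(\int_{\mathbf R^n-B}|\nabla g|^p\,\mathrm dx\right)^{\frac{1}{p}}.
\end{equation*}
This I would establish by a Hölder interpolation between the $L^p$-Sobolev inequality on $\mathbf R^n-B$ (target exponent $\frac{np}{n-p}$, no weight) and the $L^p$-Hardy inequality from the second lemma (target exponent $p$, weight $r^{-p}$). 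A direct check shows the interpolation parameter $\theta=\frac{n}{n+k}$ balances both the exponent and the weight.

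The main obstacle is precisely this critical case $l=0$: without the $L^p$-Hardy inequality the argument breaks down because $r^{-n}$ fails to be integrable at infinity on $\mathbf R^n-B$. Everything else is a routine $L^p$-upgrade of the $L^1$ machinery already displayed in the proof of Proposition \ref{Prop: Sobolev}; the bookkeeping of exponents in the Hölder step for $f_1$ when $l>0$ is the only other point requiring attention, and it proceeds in exact parallel with the earlier computation.
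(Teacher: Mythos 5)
Your proposal is correct and follows exactly the route the paper has in mind: the paper's entire proof of this proposition is the one-line remark ``Combining above two lemmas with the averaging trick, we can obtain the following,'' and your write-up is a faithful $L^p$-transcription of the inductive argument used for Proposition \ref{Prop: Sobolev}, with the $L^p$-Sobolev lemma (zero slice-averages) and the $L^p$-Hardy lemma playing the roles of \eqref{Eq: sobolev no weight} and \eqref{Eq: hardy}. In particular you correctly isolate the critical $l=0$ case, where the Hölder tail $r^{-(n+k-1-m)}$ fails to be integrable on $\mathbf R^n-B$ when $m=k-1$, forcing the further averaging down to a function of $x$ alone and the Hölder interpolation between $L^p$-Sobolev on $\mathbf R^n-B$ and $L^p$-Hardy, with the balancing weights $\frac{n}{n+k}$ and $\frac{k}{n+k}$ you indicate.
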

Clearly the Sobolev inequality still holds if one passes to an equivalent metric or a quotient space. Since every closed flat manifold is covered by a flat torus \cite{B1911} \cite{B1912}, we conclude that
\begin{corollary}\label{Cor: Sobolev Lp}
Let $(F,g_{flat})$ be a flat closed $k$-manifold and $g$ be a smooth metric on $(\mathbf R^n-B)\times F$ equivalent to $g_{euc}\oplus g_{flat}$. For any $1\leq p<n$ there is a universal constant $C$ depending only on $n$, $k$, $p$ and $F$, such that the inequality
\begin{equation*}
    \left(\int_{(\mathbf R^n-B)\times F}|f|^{\frac{(n+k)p}{n+k-p}}r^{\frac{-kp}{n+k-p}}\,\mathrm d\mu_{ g}\right)^{\frac{n+k-p}{(n+k)p}}\leq C\left(\int_{(\mathbf R^n-B)\times F}|\nabla f|^p\,\mathrm d\mu_{ g}\right)^{\frac{1}{p}}
\end{equation*}
holds for any $f$ in $C_c^1((\mathbf R^n-B)\times F)$.
\end{corollary}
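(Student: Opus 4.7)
The plan is to reduce Corollary \ref{Cor: Sobolev Lp} to Proposition \ref{Prop: Sobolev Lp} by combining two passages: first, replacing $F$ by a product torus via a finite Riemannian cover (invoking Bieberbach's theorem \cite{B1911}\cite{B1912}), and second, swapping the given metric $g$ for the standard product metric using their bi-Lipschitz equivalence.

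Concretely, I would first appeal to Bieberbach to produce a finite normal Riemannian covering $\pi\colon (T^k, h_{flat})\to (F, g_{flat})$, where $T^k = \mathbf R^k/\Lambda$ is a flat torus endowed with the pullback metric $h_{flat}=\pi^*g_{flat}$. Since $T^k$ is compact and all flat metrics on it are smooth Riemannian metrics, $h_{flat}$ is bi-Lipschitz equivalent to a standard product metric $g_{T^k}=d_1^2\mathrm d\theta_1^2+\cdots+d_k^2\mathrm d\theta_k^2$ for some choice of constants $d_i>0$. Composing with $\mathrm{id}$ on $\mathbf R^n-B$ gives a finite Riemannian covering map
\[
\Pi=\mathrm{id}\times\pi\colon \bigl((\mathbf R^n-B)\times T^k,\,\Pi^*g\bigr)\longrightarrow \bigl((\mathbf R^n-B)\times F,\,g\bigr),
\]
and the pullback metric $\Pi^*g$ is, by assumption, equivalent to $g_{euc}\oplus h_{flat}$, hence equivalent to the product metric $g_{euc}\oplus g_{T^k}$ on $(\mathbf R^n-B)\times T^k$.

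Next, given any $f\in C_c^1((\mathbf R^n-B)\times F)$, I would lift it to $\tilde f=f\circ \Pi\in C_c^1((\mathbf R^n-B)\times T^k)$. Because $\Pi$ is a finite Riemannian covering of degree $N=|\Lambda'/\Lambda|$ for some lattices, integrals with respect to $\Pi^*g$ pick up exactly a factor of $N$: for every $q\geq 1$,
\[
\int_{(\mathbf R^n-B)\times T^k}|\tilde f|^q\,\mathrm d\mu_{\Pi^*g}=N\int_{(\mathbf R^n-B)\times F}|f|^q\,\mathrm d\mu_{g},
\]
and the same identity holds with $|\nabla \tilde f|_{\Pi^*g}^q$ in place of $|\tilde f|^q$ on the left and $|\nabla f|_g^q$ on the right, since $\Pi$ is a local isometry. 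The radial weight $r$ is unchanged because $\Pi$ is the identity on the $\mathbf R^n-B$ factor, so the same weight $r^{-kp/(n+k-p)}$ appears in both integrals.

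Finally, applying Proposition \ref{Prop: Sobolev Lp} to $\tilde f$ on $(\mathbf R^n-B)\times T^k$ with the product metric $g_{euc}\oplus g_{T^k}$ and then using the bi-Lipschitz equivalence with $\Pi^*g$ (which alters the measure, the gradient, and the metric densities only by universal multiplicative constants depending on $F$) yields
\[
\Bigl(\int_{(\mathbf R^n-B)\times T^k}|\tilde f|^{\frac{(n+k)p}{n+k-p}}r^{\frac{-kp}{n+k-p}}\mathrm d\mu_{\Pi^*g}\Bigr)^{\frac{n+k-p}{(n+k)p}}\leq C'\Bigl(\int_{(\mathbf R^n-B)\times T^k}|\nabla \tilde f|_{\Pi^*g}^p\,\mathrm d\mu_{\Pi^*g}\Bigr)^{\frac{1}{p}}.
\]
Dividing both sides by $N^{1/p}$ and rewriting everything downstairs via the covering identities above gives the inequality of the corollary, with a constant depending only on $n$, $k$, $p$, and $F$. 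The only delicate point is making sure the bi-Lipschitz constant relating $\Pi^*g$ to the product metric can be bounded in terms of $F$ alone; since $T^k$ is compact and $h_{flat}$ depends only on $F$, this is routine but is the step that requires the most care.
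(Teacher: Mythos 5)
Your argument is correct and follows the same route as the paper: the paper's proof is the single sentence preceding the corollary, which asserts that the inequality survives passage to an equivalent metric and to a finite cover by a flat torus (via Bieberbach), and then invokes Proposition \ref{Prop: Sobolev Lp}. You have simply spelled out the covering bookkeeping (the factor $N$ cancelling on both sides) and the metric-equivalence step in detail, but the idea is identical.
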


\subsection{Lohkamp compactification and Proof for Theorem \ref{Thm: main 4}} In this subsection, we are going to prove our main Theorem \ref{Thm: main 4} for asymptotically flat manifolds with fiber $F$ (recall from Definition \ref{Defn: AF with fiber F}). First we point out that the fiber $F$ can be assumed to be $T^k$, $k=\dim F$, from a lifting argument without loss of generality, and this is always the case in this subsection. In the following discussion, $n$ is denoted to be $\dim M$, $k$ is denoted to be $\dim F$ and $d=n-k$.

Let us start with the following lemma, which provides a convenient way to compute the mass $m(M,g,\mathcal E)$.
\begin{lemma}\label{defn of mass}
 Let $(M, g, \mathcal{E})$ be an asymptotically flat manifold with fiber $F$ and arbitrary ends. Fix an orthornormal basis $\{e_{a}\}$ with respect to $g_{0}=g_{euc}\oplus g_F$ consisting of $\{\frac{\partial }{\partial x_{i}},f_{\alpha}\}$ such that $g$ has the expression $g=g_{ab}e_a\otimes e_b$. Then the mass of $(M, g, \mathcal{E})$ is given by
$$
\begin{aligned}
m(M, g, \mathcal{E})
&=\frac{1}{2\left|\mathbf{S}^{d-1}\right| \operatorname{Vol}\left(F, g_{F}\right)} \lim _{\rho \rightarrow+\infty}\int_{S_{\rho}\times F}(\partial_{i}g_{ij}-\partial_{j}g_{aa})\ast\mathrm{d}x^{j}\mathrm{d}\gamma.
\end{aligned}
$$
In above expression, the symbol $a$ runs over all index, $i$ runs over the index on Euclidean space $\mathbf R^d$, $\alpha$ runs over index on $F$, $d\gamma$ is volume form of $g_{F}$ and  $\ast$ is the Hodge star operator corresponding to the Euclidean metric $\mathrm{d}x^{2}$ in $\mathbf R^{d}$.
\end{lemma}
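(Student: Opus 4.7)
The plan is to start directly from the definition
$$m(M,g,\mathcal{E}) = \frac{1}{2|\mathbf{S}^{d-1}|\vol(F,g_F)}\lim_{\rho\to\infty} \int_{S_\rho \times F} \ast_{g_0}\bigl(\Div_{g_0} g - d\tr_{g_0} g\bigr)$$
and expand the integrand in the orthonormal frame $\{e_a\}=\{\partial/\partial x^i, f_\alpha\}$. Since $g_0=\delta_{ab}e^a\otimes e^b$ in this frame, the trace is $\tr_{g_0} g = g_{aa}$ and the divergence is $(\Div_{g_0} g)_b = \partial_a g_{ab}$, so the $1$-form in question has components
$$\omega_b := \partial_a g_{ab} - \partial_b g_{aa},$$
with $a$ summed over all indices and $b$ free.

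Next I would compute $\ast_{g_0}\omega$ using the product structure $g_0=g_{\mathrm{euc}}\oplus g_F$. For a Euclidean index $j$ one has $\ast_{g_0}(\mathrm{d}x^j) = (\ast\,\mathrm{d}x^j)\wedge \mathrm{d}\gamma$, whereas for a fiber index $\alpha$ one has $\ast_{g_0}(e^\alpha) = \pm(\mathrm{d}x^1\wedge\cdots\wedge \mathrm{d}x^d)\wedge(\ast_{g_F}e^\alpha)$. The key observation is that this second family of $(n-1)$-forms pulls back to zero on $S_\rho \times F$, because its restriction already contains the top-degree Euclidean form while $S_\rho \subset \mathbf{R}^d$ has dimension only $d-1$. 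Consequently only the Euclidean components of $\omega$ survive after restriction, leaving
$$\int_{S_\rho\times F}\ast_{g_0}\omega = \int_{S_\rho\times F}\bigl(\partial_a g_{aj} - \partial_j g_{aa}\bigr)\,\ast\mathrm{d}x^j\wedge \mathrm{d}\gamma,$$
with $j$ now ranging only over Euclidean indices.

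The last step, and the only place where a little care is needed, is to split the inner sum $\partial_a g_{aj} = \partial_i g_{ij} + \partial_\alpha g_{\alpha j}$ and show that the fiber piece drops out after integrating along $F$. Since by lifting we may assume $F=T^k$, the vector field $f_\alpha$ is a globally defined Killing field, and Fubini together with the fundamental theorem of calculus on each $\mathbf{S}^1$-factor give
$$\int_F \partial_\alpha g_{\alpha j}(x,\cdot)\, \mathrm{d}\gamma = 0$$
pointwise in $x\in S_\rho$. Inserting this into the previous display yields exactly the claimed identity. I expect the main potential obstacle to be the sign and index bookkeeping in the Hodge star on the product metric together with the split $a=(i,\alpha)$; but once the product structure of $g_0$ is exploited as above, these reduce to straightforward verifications.
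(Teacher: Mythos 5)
Your proposal is correct and follows essentially the same route as the paper: expand the definition in the frame $\{e_a\}$, observe that only the Euclidean-index components of the one-form survive restriction to $S_\rho\times F$, and then eliminate the remaining fiber-derivative term $\partial_\alpha g_{\alpha j}$ by integration over the closed fiber (the paper invokes the divergence theorem along $F$, which is the same computation as your Fubini/fundamental-theorem argument on the torus cover). No gaps.
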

\begin{proof}
Let $\{e_a^*\}$ be the dual frame of $\{e_a\}$. From the definition of the total mass we have
\[
\begin{split}
m(M, g, \mathcal{E})
&=\frac{1}{2\left|\mathbf{S}^{d-1}\right| \operatorname{Vol}\left(F, g_{F}\right)} \lim _{\rho \rightarrow+\infty}\int_{S_{\rho}\times F}(\partial_{a}g_{ab}-\partial_{b}g_{aa})\ast e_b^*\\
&=\frac{1}{2\left|\mathbf{S}^{d-1}\right| \operatorname{Vol}\left(F, g_{F}\right)} \lim _{\rho \rightarrow+\infty}\int_{S_{\rho}\times F}(\partial_{a}g_{aj}-\partial_{j}g_{aa})\ast\mathrm{d}x^{j}\mathrm{d}\gamma.
\end{split}
\]
We obtain the desired formula after applying the divergence theorem to the integral of the first term along $F$.
\end{proof}

Denote $U$ to be an open set such that $\mathcal{E}\subset U $ and $\overline{U-\mathcal{E}}$ is compact.  We have 

\begin{lemma}\label{3}For $p=2$, there exist a constant $C_{U}$ depending only on $g$, $U$, $n$ and $k$ such that for $f \in C_{c}^{1}\left( \overline{U}\right)$ ,
\begin{equation}\label{sobelov on U}
\left(\int_{U}|f|^{\frac{2(d+k)}{d+k-2}} r^{\frac{-2k}{d+k-2}} \mathrm{~d} \mu_{g}\right)^{\frac{d+k-2}{2(d+k)}} \leq C_{U}\left(\int_{U}|\nabla f|^{2} \mathrm{~d} \mu_{g}\right)^{\frac{1}{2}}
\end{equation}
\end{lemma}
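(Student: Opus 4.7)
The plan is to apply Corollary \ref{Cor: Sobolev Lp} on the end $\mathcal{E}$ via a partition of unity, handle the compact transition region using a standard Sobolev embedding, and then eliminate the resulting lower-order $\|f\|_{L^2}$-term by a Hardy inequality on $\mathcal{E}$.

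First I would fix a smooth cutoff $\eta \in C_c^\infty(U)$ with $\eta \equiv 1$ on a neighborhood of $K := \overline{U - \mathcal{E}}$ and $\operatorname{supp}(\eta)$ contained in a compact connected submanifold-with-boundary $W \subset U$, chosen so that $A := W \cap \mathcal{E}$ is a collar of $\partial\mathcal{E}$ inside $\mathcal{E}$. Writing $f = \eta f + (1-\eta)f$, the second piece is supported in $\mathcal{E}$ (because $\eta \equiv 1$ on $K$), and on $\mathcal{E}$ the metric $g$ is uniformly equivalent to $g_0 = g_{euc} \oplus g_F$ by the decay assumption \eqref{Eq: decay 1}; hence Corollary \ref{Cor: Sobolev Lp} with $p=2$ applies to $(1-\eta)f|_{\overline{\mathcal{E}}}$ and gives
\[
\|(1-\eta)f\|_{L^q_w(\mathcal{E})} \le C \|\nabla((1-\eta)f)\|_{L^2(\mathcal{E})} \le C\bigl(\|\nabla f\|_{L^2(U)} + \|f\|_{L^2(\operatorname{supp}\nabla\eta)}\bigr),
\]
where $q = 2(d+k)/(d+k-2)$ and $w = r^{-2k/(d+k-2)}$. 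For the compactly supported piece $\eta f \in H^1_0(W)$, the weight $w$ is bounded on $W$, and the standard Sobolev embedding on the compact manifold-with-boundary $W$ (of dimension $n=d+k$) yields
\[
\|\eta f\|_{L^q_w(W)} \le C\bigl(\|\nabla f\|_{L^2(W)} + \|f\|_{L^2(W)}\bigr).
\]
Adding the two estimates produces the intermediate bound $\|f\|_{L^q_w(U)} \le C\bigl(\|\nabla f\|_{L^2(U)} + \|f\|_{L^2(W)}\bigr)$.

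The main obstacle is then to absorb the lower-order term $\|f\|_{L^2(W)}$ into $\|\nabla f\|_{L^2(U)}$. For this I would invoke a Hardy inequality on the end: a radial integration by parts on $(\mathbf{R}^d - B) \times F$, in which the boundary contribution on $\partial B \times F$ has a favorable sign (the outer normal to $\mathcal{E}$ points toward the origin, so the boundary term appears with a negative coefficient), gives
\[
\int_{\mathcal{E}} |u|^2 r^{-2}\,\mathrm{d}\mu_g \;\le\; C \int_{\mathcal{E}} |\nabla u|^2\,\mathrm{d}\mu_g
\]
for every $u \in C^1_c(\overline{\mathcal{E}})$. Applied to $f|_{\overline{\mathcal{E}}}$, and using that $r$ is bounded on the collar $A$, this produces $\|f\|_{L^2(A)} \le C\|\nabla f\|_{L^2(U)}$. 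A Poincaré--Wirtinger inequality on the compact connected manifold $W$, anchored to the average of $f$ over $A$, then upgrades this bound to
\[
\|f\|_{L^2(W)} \le C\bigl(\|f\|_{L^2(A)} + \|\nabla f\|_{L^2(W)}\bigr) \le C\|\nabla f\|_{L^2(U)},
\]
which combined with the intermediate bound closes the argument. The crucial point is that the $L^2$-remainder forced by any compact-domain Sobolev embedding cannot be dropped in isolation, and the Hardy estimate on the non-compact end is precisely what allows us to trade it back for the gradient norm.
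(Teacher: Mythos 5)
Your argument is correct and is essentially the approach the paper intends: the paper's proof consists of invoking Corollary \ref{Cor: Sobolev Lp} on the end and then citing \cite[Lemma 3.1]{SY1979} for the gluing step, which is exactly the partition-of-unity decomposition plus absorption of the lower-order term that you carry out explicitly. Your specific mechanism for the absorption (the fiber-integrated Hardy inequality on $(\mathbf R^d-B)\times F$ with $p=2<d$, which is legitimate since $d\geq 3$, followed by a Poincar\'e--Wirtinger inequality on $W$ anchored at the collar $A$) is a valid instantiation of that standard step.
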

\begin{proof} 
Since the metric $g$ is euivalent to $g_0=g_{euc}\oplus g_F$, Corollary \ref{Prop: Sobolev Lp} also holds for $(\mathcal E,g)$. To extend the Sobolev inequality on $(U,g)$, the argument is the same as in \cite[Lemma 3.1]{SY1979}.
\end{proof}

For a function, it can be written as the difference of its positive part and negetive part, i.e. 
$$f=f_{+}-f_{-},f_{+}=\max\{f,0\},f_{-}=\max\{-f,0\}.$$

\begin{proposition}\label{3.2likeSY}
Assume that $f$ is a smooth function on $M$ with compact support in $U$ such that
the negative part $f_{-}$ of $f$ satisfies
\begin{equation}\label{f_{-}control}
\left(\int_{U}\left|f_{-}\right|^{\frac{d+k}{2}} r^{k}\,\mathrm{d} \mu_{g}\right)^{\frac{2}{d+k}}\leq \frac{1}{2C_{U}^{2}},
\end{equation}
here $C_{U}$ is the sobelov constant comes from Lemma $\ref{3}$, and 
\begin{equation}\label{f control}
\left(\int_{U}|f|^{\frac{2 (d+k)}{d+k+2}}  r^{\frac{2k(d+k-2)}{(d+k+2)^{2}}} \mathrm{d} \mu_{g}\right)^{\frac{d+k+2}{2(d+k)}} \leq C_{0}
\end{equation}
holds for some constants $C_{0}$.
Then the equation
\begin{equation}\label{main equation}
\Delta_{g} u-f u=0
\end{equation}
has a positive solution $u$ on $M$ such that $u$ has the expansion
\begin{equation}\label{expansion}
u=1+\frac{A}{ r^{d-2}}+\omega
\end{equation}
with
\begin{equation}\label{A}
A=-\frac{1}{(d-2)\left|\mathbf{S}^{d-1}\right||T^{k}|} \int_{U} f u \,\mathrm{d} \mu_{g}
\end{equation}
and for any $\epsilon>0$ small, $|\omega|+r|\partial \omega|+r^{2}\left|\partial^{2} \omega\right| = O(r^{-(d-1-\epsilon)})$ in $\mathcal{E}$ as $r\rightarrow \infty$. Moreover, we have
\begin{equation}\label{integral on boundary}
\int_{\partial U} \frac{\partial u}{\partial \vec{n}} \mathrm{~d} \sigma_{g}=0 \quad \text { and } \quad \int_{\partial U} u \frac{\partial u}{\partial \vec{n}} \mathrm{~d} \sigma_{g} \leq 0
\end{equation}
where $\vec{n}$ is the outward unit normal vector of $\partial U$.
\end{proposition}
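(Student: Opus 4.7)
I would solve the equation variationally by writing $u=1+v$, which turns $\Delta_g u - fu = 0$ into $\Delta_g v - fv = f$, the Euler--Lagrange equation for
$$J[v] = \tfrac{1}{2} \int_M |\nabla v|^2 \,\mathrm d\mu_g + \tfrac{1}{2} \int_M f v^2 \,\mathrm d\mu_g + \int_M f v \,\mathrm d\mu_g,$$
minimized over the completion $H$ of $C_c^\infty(M)$ in the Dirichlet norm $\|\nabla\cdot\|_{L^2(M,g)}$. The key analytic input is the weighted Sobolev inequality \eqref{sobelov on U} on the neighborhood $\overline U\supset \mathcal E$, which controls an $L^{2(d+k)/(d+k-2)}$-type norm by the Dirichlet energy.

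\textbf{Existence, regularity, positivity.} Coercivity of $J$ follows by splitting $f = f_+ - f_-$ and applying H\"older together with \eqref{sobelov on U} to the negative part:
$$\int_M f_- v^2 \,\mathrm d\mu_g \leq \left(\int_U f_-^{(d+k)/2} r^k \,\mathrm d\mu_g\right)^{\!2/(d+k)} \!\!\left(\int_U |v|^{2(d+k)/(d+k-2)} r^{-2k/(d+k-2)}\,\mathrm d\mu_g\right)^{\!(d+k-2)/(d+k)},$$
which by the smallness hypothesis \eqref{f_{-}control} is bounded by $\tfrac12 \int_M|\nabla v|^2$. The linear functional $v\mapsto \int_M fv$ is continuous on $H$ by dualizing the same Sobolev embedding against the weighted $L^p$ bound \eqref{f control}. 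Thus $J$ admits a unique minimizer $v\in H$, which is a classical smooth solution since $f\in C_c^\infty(U)$. To show $u=1+v$ is positive, the expansion below forces $u\to 1$ at infinity in $\mathcal E$, hence $u_- = \max(-u,0)\in H$; testing the equation against $u_-$ gives $\int |\nabla u_-|^2 + f u_-^2 = 0$, and the same coercivity argument forces $u_-\equiv 0$. The strong maximum principle then upgrades to $u>0$.

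\textbf{Asymptotic expansion and computation of $A$.} Since $f$ has compact support, $u$ is $g$-harmonic outside a compact set. On the model $((\mathbf R^d\setminus B)\times F, g_0)$, decomposing into Fourier modes on $F$ shows that solutions of $\Delta_{g_0}w=0$ tending to $0$ at infinity and constant on $F$ have leading term $Ar^{2-d}$, while $F$-nonconstant modes decay exponentially and higher spherical harmonics in the radial variable contribute $O(r^{-(d-1)})$. Using $|g-g_0|_{g_0}+r|\nabla_{g_0}(g-g_0)|_{g_0}+r^2|\nabla_{g_0}^2(g-g_0)|_{g_0} = O(r^{-\mu})$, standard weighted Schauder estimates applied to $\Delta_g\omega = (\Delta_{g_0}-\Delta_g)(1+Ar^{2-d}) + O(\text{lower})$, with an iteration absorbing the borderline $\epsilon$-loss, yield $|\omega|+r|\partial\omega|+r^2|\partial^2\omega| = O(r^{-(d-1-\epsilon)})$ for every small $\epsilon>0$. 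The coefficient $A$ is fixed by a flux identity: integrating $\Delta_g u = fu$ over $U\cap\{r\leq \rho\}$ and letting $\rho\to\infty$, the contribution from $S_\rho\times F$ tends to $-A(d-2)|\mathbf S^{d-1}|\vol(F,g_F)$, which matched against $\int_M fu$ gives \eqref{A}.

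\textbf{Boundary identities and main obstacle.} The identity $\int_{\partial U}\partial_\nu u\,\mathrm d\sigma_g = 0$ follows from the divergence theorem applied to $\Delta_g u = fu$ on $U\cap\{r\leq \rho\}$, whose boundary-at-infinity flux cancels $\int_M fu$ by \eqref{A}. For the second, since $u$ is harmonic on $M\setminus U$ with $\nabla u\in L^2(M)$ by construction, Green's identity on $M\setminus U$ gives $\int_{M\setminus U}|\nabla u|^2 = -\int_{\partial U} u\,\partial_\nu u$, after a cutoff argument to show the boundary contribution at any additional ends of $M$ vanishes. Hence $\int_{\partial U} u\,\partial_\nu u \leq 0$. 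The main obstacle is precisely this issue of arbitrary ends: the weighted Sobolev inequality is only available on the neighborhood $U$ of $\mathcal E$, so both the variational setup and the vanishing of boundary terms at non-$\mathcal E$ ends must be justified by careful cutoff and density arguments that leverage the global Dirichlet control coming from step one. A secondary technical point is tracking the $\epsilon$-loss in the remainder $\omega$ to get the claimed $O(r^{-(d-1-\epsilon)})$ decay, which requires a bootstrap on $\mu > (d-2)/2$ rather than a single-shot Schauder estimate.
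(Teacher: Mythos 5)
Your variational route is genuinely different from the paper's: the paper builds the solution by an exhaustion $U=U_0\subset U_1\subset\cdots$ of $M$, solving on each $U_i$ (further truncated at radius $R$) a problem with a \emph{homogeneous Neumann condition on $\partial U_i$} via the Fredholm alternative, and passing to the limit with uniform weighted estimates. That boundary condition is not a technical convenience; it is exactly what produces \eqref{integral on boundary} and, through it, the formula \eqref{A}: since $v_i$ is harmonic in $U_i-U$, the divergence theorem gives $\int_{\partial U}\partial_{\vec n}v_i=\int_{\partial U_i}\partial_{\vec n}v_i=0$ and $\int_{\partial U}v_i\partial_{\vec n}v_i=-\int_{U_i-U}|\nabla v_i|^2\le 0$ at every finite stage, and these survive the limit.

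This is where your proposal has a genuine gap. Both identities in \eqref{integral on boundary}, and the computation of $A$ by integrating \eqref{main equation}, amount to the assertion that $u$ has vanishing flux through the infinity of the ends of $M$ \emph{other than} $\mathcal E$. For the minimizer of $J$ over the Dirichlet completion $H$ of $C_c^\infty(M)$ this is false in general. If $M$ has a second end isometric near infinity to $\mathbf R^{d'}-B$ with $d'\ge 3$, the minimizer's harmonic tail there is the energy-minimizing decaying one, $b\,r^{2-d'}+O(r^{1-d'})$, whose flux $-(d'-2)b|\mathbf S^{d'-1}|$ is nonzero whenever $b\ne 0$; and $b$ is dictated by the interior data, not constrained by the Euler--Lagrange equation, because test functions in $C_c^\infty(M)$ (or in $H$, on a non-parabolic end) impose no condition at that infinity. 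In particular your proposed ``cutoff argument to show the boundary contribution at any additional ends vanishes'' cannot succeed as stated: finiteness of $\int_{M\setminus U}|\nabla u|^2$ does not kill these terms, as the same example $u=1+b/r$ on $\mathbf R^3-B$ shows ($\int_{S_\rho}u\,\partial_r u\to-4\pi b$). To repair this you must build the Neumann condition into the construction at the non-$\mathcal E$ ends, which is essentially the paper's exhaustion. The remaining ingredients of your proposal (coercivity from the smallness of $f_-$ via the weighted Sobolev inequality, positivity by testing with $u_-$, and the expansion by separating the $F$-average $\Pi_0 v$ from the oscillating part) are sound in spirit and close to the paper's, except that the paper estimates $\Pi_1 v$ directly from the gradient decay and the vanishing of fiber averages rather than through exponential decay of Fourier modes.
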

\begin{proof}
The proof is divided into two steps.

\textbf{Step 1: the existence of $u$.}
Let us take a smooth exhaustion
$$
U=U_{0} \subset U_{1} \subset U_{2} \subset \cdots
$$
such that $U_{i}-\mathcal{E}$ has a compact closure for all $i$ and
$$
M=\bigcup_{i=0}^{\infty} U_{i}
$$
such exhaustion can be construucted from collecting points no greater than a certain distance to $U$. Fix some $U_{i}$ and consider the following equation
\begin{equation}\label{v_{i}in U_{i}}
\Delta_{g} v_{i}-f v_{i}=f \quad\text { in } \quad U_{i}; \quad \frac{\partial v_{i}}{\partial \vec{n}}=0 \quad\text { on } \quad\partial U_{i}
\end{equation}
where $\vec{n}$ is denoted to be the outward unit normal of $\partial U_{i}$. It is standard that equation \eqref{v_{i}in U_{i}} can be solved by exhaustion method. Let us extend the radical function $r$ by defining $r=1$ outside $\mathcal{E}$. For any $R>1$, we take
$$
U_{i, R}=\left\{x \in U_{i}: r(x) \leq R\right\}
$$
We have $\partial U_{i, R}=\partial U_{i} \cup \partial B_{R}$, where $\partial B_{R}=\{x \in M: r(x)=R\} \subset \mathcal{E}$. With this setting, we consider the following equation
\begin{equation}\label{v_{i,R}in U_{i,R}}
\left\{\begin{array}{clc}
\Delta_{g} v_{i, R}-f v_{i, R}=f & \text { in } & U_{i, R} \\
\frac{\partial v_{i, R}}{\partial n}=0 & \text { on } & \partial U_{i} \\
v_{i, R}=0 & \text { on } & \partial B_{R}
\end{array}\right.
\end{equation}

From Fredholm alternative \cite[Theorem 5.3]{GT2001}, equation \eqref{v_{i,R}in U_{i,R}} has a smooth solution $v_{i, R}$ once the corresponding homogeneous equation to \eqref{v_{i,R}in U_{i,R}} has only zero solution, so we verify this shortly. Let $\zeta$ be a solution of the homogeneous equation. From integral by parts we see
$$
\int_{U_{i, R}}\left|\nabla_{g} \zeta\right|^{2} \mathrm{~d} \mu_{g}=-\int_{U_{i, R}} f \zeta^{2} \mathrm{~d} \mu_{g}
$$
From inequality \eqref{sobelov on U}, the left hand side is greater than or equal to
$$
\frac{1}{C_{U}^{2}}\left(\int_{U_{R}}|\zeta|^{\frac{2(d+k) }{d+k-2}} r^{\frac{-2k}{d+k-2}} \mathrm{~d} \mu_{g}\right)^{\frac{d+k-2}{d+k }}
$$
however, the right hand side is less than or equal to
$$
\left(\int_{U_{R}}\left|f_{-}\right|^{\frac{d+k}{2}} r^{k}\,\mathrm{d} \mu_{g}\right)^{\frac{2}{d+k}}\left(\int_{U_{R}}|\zeta|^{\frac{2(d+k) }{d+k-2}} r^{\frac{-2k}{d+k-2}} \,\mathrm{d} \mu_{g}\right)^{\frac{d+k-2}{d+k}}
$$
where we used H\"{o}lder inequality and fact that $f$ has compact support in $U$. Combined with \eqref{f_{-}control} we obtain
$$
\int_{U_{R}}|\zeta|^{\frac{2(d+k) }{d+k-2}} r^{\frac{-2k}{d+k-2}} \,\mathrm{d} \mu_{g}=0
$$
hence $\zeta$ vanishes in $U_{R}$. Notice that $\zeta$ is a harmonic function in $U_{i, R}-U_{R}$ and a small neighbourhood of $\partial U$, then the unique continuation yields $\zeta \equiv 0$ on $U_{i,R}$, so \eqref{v_{i,R}in U_{i,R}} has a smooth solution $v_{i, R}$. Multiplying $v_{i, R}$ to both side of \eqref{v_{i,R}in U_{i,R}}, then integrating and using H\"{o}lder inequality, we have
$$
\begin{aligned}
\int_{U_{i, R}}\left|\nabla_{g} v_{i, R}\right|^{2} \mathrm{~d} \mu_{g} \leq
&\left(\int_{U_{R}}\left|f_{-}\right|^{\frac{d+k}{2}} r^{k}\,\mathrm{d} \mu_{g}\right)^{\frac{2}{d+k}}\left(\int_{U_{R}}|v_{i, R}|^{\frac{2(d+k) }{d+k-2}} r^{\frac{-2k}{d+k-2}}\, \mathrm{d} \mu_{g}\right)^{\frac{d+k-2}{d+k}}\\
+&\left(\int_{U_{R}}|f|^{\frac{2 (d+k)}{d+k+2}} r^{\frac{2k(d+k-2)}{(d+k+2)^{2}}}\,\mathrm{d} \mu_{g}\right)^{\frac{d+k+2}{2(d+k)}}\left(\int_{U_{R}}\left|v_{i, R}\right|^{\frac{2(d+k)}{d+k-2}} r^{\frac{-2k}{d+k-2}}\, \mathrm{d} \mu_{g}\right)^{\frac{d+k-2}{2 (d+k)}}.
\end{aligned}
$$
Combined with \eqref{sobelov on U}, \eqref{f_{-}control} and \eqref{f control}, it follows
\begin{equation}\label{estimate of v_{i,R}}
\left(\int_{U_{R}}\left|v_{i, R}\right|^{\frac{2(d+k)}{d+k-2}} r^{\frac{-2k}{d+k-2}} \,\mathrm{d} \mu_{g}\right)^{\frac{d+k-2}{2 (d+k)}} \leq 2 C_{U}^{2} C_{0}.
\end{equation}

Take an increasing sequence $R_{j}$ such that $R_{j} \rightarrow+\infty$ as $j \rightarrow \infty$. Since the weight function $r^{\frac{-2k}{d+k-2}}$ is bounded from below in any compact set, from the $L^{p}$-estimate \cite[Theorem 9.11]{GT2001} %with $p=\frac{2(d+k)}{d+k-2}$ 
and the Schauder estimate \cite[Theorem 6.2]{GT2001},  we have
\begin{equation}\label{K in U}
\sup _{K}\left|\nabla_{g}^{k} v_{i, R_{j}}\right| \leq C(k, K)
\end{equation}
for any compact subset $K$ of $U$, where $C(k, K)$ is a constant depending on $k$ and $K$, but independent of $i$ and $R_{j}$. Fix a positive constant $r_{0}>1$.

Now we claim that 
\begin{equation}\label{C(i,r_{0})}
\sup _{U_{i,r_{0}}}\left|\nabla_{g}v_{i, R_{j}}\right| \leq C(i, r_{0})
\end{equation}
holds for some constant $C(i,r_{0})$ independent of $R_{j}$. Otherwise, there is a subsequence (still denoted by $v_{i, R_{j}}$) such that
$$
\sup _{U_{i, r_{0}}}\left|v_{i, R_{j}}\right| \rightarrow+\infty \quad \text { but } \quad\left(\int_{U_{r_{0}}}\left|v_{i, R_{j}}\right|^{\frac{2 (d+k)}{d+k-2}} \,\mathrm{d} \mu_{g}\right)^{\frac{d+k-2}{2 (d+k)}} \leq 2 C_{U}^{2} C_{0}C_{r_{0}},
$$
where $C_{r_{0}}$ is a constant depending on $r_{0}$, which comes from the weight $r^{\frac{-2k}{d+k-2}}$.

Let
$$
w_{i, R_{j}}=\left(\sup _{U_{i, r_{0}}}\left|v_{i, R_{j}}\right|\right)^{-1} v_{i, R_{j}}.
$$
Clearly $\left|w_{i, R_{j}}\right| \leq 1$ in $U_{i, r_{0}}$ and so it follows from the Schauder estimates \cite[ Theorem 6.2 and 6.30]{GT2001} that up to a subsequence $w_{i, R_{j}}$ converges uniformly to a limit function $w_{i}$ solving
$$
\Delta_{g} w_{i}-f w_{i}=0 \quad \text { in } \quad U_{i, r_{0}} ; \quad \frac{\partial w_{i}}{\partial \vec{n}}=0 \quad \text { on } \quad \partial U_{i},
$$
and satisfying
$$
\sup _{U_{i, r_{0}}}\left|w_{i}\right|=1 \quad \text { and } \quad w_{i}=0 \quad \text { in } \quad U_{r_{0}},
$$
where the latter comes from \eqref{K in U}. As before, $w_{i}$ is a harmonic function in $U_{i, r_{0}}-U_{r_{0}}$ and  a small neighbourhood of $\partial U$, then the unique continuation yields $w_{i} \equiv 0$ in $U_{i, r_{0}}$, which leads to a contradiction. 

Now the uniform bound for $v_{i, R_{j}}$ in $U_{i, r_{0}}$ combined with Schauder estimate as well as
estimate \eqref{C(i,r_{0})} implies that a subsequence of $v_{i, R_{j}}$ converges to a smooth solution $v_{i}$ of \eqref{v_{i}in U_{i}}. Clearly we have
\begin{equation}\label{v_{i}estimate}
\left(\int_{U}\left|v_{i}\right|^{\frac{2 (d+k)}{d+k-2}} r^{\frac{-2k}{d+k-2}}\,\mathrm{d} \mu_{g}\right)^{\frac{d+k-2}{2 (d+k)}} \leq 2 C_{U}^{2} C_{0}.
\end{equation}

Next we claim that for any small $\epsilon>0$ there are constants $r_0$ and $C'$ independent of $i$ such that 
\begin{equation}\label{rough decay}
|v_{i}(x)|\leq C'r^{-(d-2-\epsilon)}, \quad r(x)\geq r_{0}.
\end{equation}
To prove this, let us construct a comparison function $z(r)$ in the form of $Cr^{-(d-2-\epsilon)}$ with $C>0$ on the region $\{r\geq r_0\}$. From simple calculation we see
$$\Delta_{g} (r^{-(d-2-\epsilon)})=-\epsilon(d-2-\epsilon)r^{-(d-\epsilon)}+O(r^{-(d-\epsilon+\mu)}).$$
So we can take $r_0$ large enough such that $z(r)$ is superharmonic in $\{r\geq r_0\}$. Note that from \eqref{estimate of v_{i,R}} we have the uniform estimate
$$\sup _{r(x)=r_{0}} |v_{i,R_{j}}(x)|\leq \Lambda$$
for a universal constant $\Lambda$ independent of $i$ and $j$. Also, the function $v_{i, R_{j}}$ vanishes on $\partial B_{R_{j}}$.
After
choosing a large $C$, then we have $z(r)\geq v_{i,R_{j}}$ on boundary of $B_{R_{j}}-B_{r_{0}}$. Since $v_{i,R_{j}}$ is harmonic in $\{r\geq r_0\}$, we have $\Delta_{g} (z(r)-v_{i,R_{j}})\leq 0$. It follows from maximum principle that $v_{i,R_{j}}\leq z(r)$ in $\{r\geq r_0\}$. By letting $j\to +\infty$, we conclude that $v_i\leq z(r)$ in $\{r\geq r_0\}$. Similarly we also have $v_{i}\geq -z(r)$ and this completes the proof of \eqref{rough decay}.

Notice that $v_{i}$ is a harmonic function in $U_{i}-U$. Then the divergence theorem yields
$$
\int_{\partial U} \frac{\partial v_{i}}{\partial \vec{n}} \mathrm{~d} \sigma_{g}=\int_{\partial U_{i}} \frac{\partial v_{i}}{\partial \vec{n}} \mathrm{~d} \sigma_{g}=0
$$
and
$$
\int_{\partial U} v_{i} \frac{\partial v_{i}}{\partial \vec{n}} \mathrm{~d} \sigma_{g}=-\int_{U_{i}-U}\left|\nabla_{g} v_{i}\right|^{2} \mathrm{~d} \sigma_{g} \leq 0.
$$
Next we consider the function $u_{i}=1+v_{i}$ instead. Clearly, $u_{i}$ solves
$$
\Delta_{g} u_{i}-f u_{i}=0 \quad \text { in } \quad U_{i}; \quad \frac{\partial u_{i}}{\partial \vec{n}}=0 \quad \text { on } \quad \partial U_{i}.
$$
To estimete $u_{i}$, we first claim that $u_{i}$ is positive everywhere in $U_{i}$. Otherwise, the set $\Omega_{i,-}=$ $\left\{u_{i}<0\right\}$ is non-empty. Observe that $u_{i}$ is positive at the infinity of $\mathcal{E}$ due to \eqref{rough decay} and so $\Omega_{i,-}$ is compact. From integral by parts as well as the boundary condition of $u_{i}$, we have
$$
\int_{\Omega_{i,-}}\left|\nabla_{g} u_{i}\right|^{2} \,\mathrm{d} \mu_{g}=-\int_{\Omega_{i,-}} f u_{i}^{2}\, \mathrm{d} \mu_{g}.
$$
As before, we conclude from \eqref{sobelov on U} and \eqref{f_{-}control} that $u_{i}$ vanishes in $\Omega_{i,-}$, which is obviously impossible since the unique continuation leads to a contradiction. Now, we are able to apply the  Harnack inequality \cite[Theorem 8.20]{GT2001} to obtain local smooth convergence of $u_{i}$. In fact, functions $u_{i}$ have a uniform bound in any compact subset of $M$ from \eqref{v_{i}estimate}. Combined with the Schauder estimates, it implies that $u_{i}$ converges smoothly to a nonnegative limit function $u$ up to a subsequence. Clearly $u$ solves the equation $\Delta_{g} u-f u=0$ in $M .$ 

\textbf{Step 2: the expansion of $u$ at the infinity of $\mathcal{E}$.}
Denote $v=u-1$. Then $v$ is harmonic when $r\geq r_0$. From \eqref{rough decay} we also have
\begin{equation*}
|v(x)|\leq C'r^{-(d-2-\epsilon)}, \quad r(x)\geq r_0.
\end{equation*}
Note that $\mathcal E$ is covered by $(\mathbf R^d-B)\times\mathbf R^k$. Lift $g$ to a smooth metric $\tilde{g}$ on $(\mathbf R^d-B)\times\mathbf R^k$ and $v$ to a function $\tilde v$ on $(\mathbf R^d-B)\times\mathbf R^k$. After applying \cite[Theorem 6.2]{GT2001} to the function $\tilde v$ in balls $B_r^{n+k}(x_0)\subset (\mathbf R^d-B)\times\mathbf R^k$ with $x_0=(a_0,b_0)$ and $|a_0|=r$, we see
$$
|\partial \tilde v|(x_{0})+r|\partial^{2} \tilde v|(x_{0})+r^2|\partial^{3} \tilde v|(x_{0})\leq Cr^{-(d-1-\epsilon)}
$$
for some constant $C$. Hence we have
\begin{equation}\label{decay of v}
|v(x)|+r|\partial v(x)|+r^{2}|\partial^{2} v(x)|+r^3|\partial^3 v(x)|=O(r^{-(d-2-\epsilon)}).
\end{equation}
Next we work with the metric $g_{0}=g_{\text {euc }}+g_{T^{k}}$ on $\mathcal{E}$. Let us consider the decomposition $v=\Pi_{0}v+\Pi_{1}v$ with
$$
\Pi_{0}v(a_{0},\cdot)=\frac{1}{|\vol(T^{k},g_{T^k})|}\int_{a_{0}\times T^{k}}v(a_{0},\theta)\,\mathrm{d}\theta\quad \text{and}\quad\Pi_{1}v=v-\Pi_{0}v,
$$
where $a_{0}\in \mathbf R^{d}-B$ and $\theta\in T^k$.  Note that $\Pi_{0}v$ can be viewed as a function on $\mathbf R^{d}-B$. So we can compute
$$
\begin{aligned}
\Delta_{g_{\text {euc }}}\Pi_{0}v&=\Delta_{g_{0}}\frac{1}{|T^{k}|}\int_{a_{0}\times T^{k}}v(a_{0},\theta)d\theta\\&=\frac{1}{|T^{k}|}\int_{a\times T^{k}}\{(\delta^{ij}-g^{ij})\partial_{ij}v(a_{0},\theta)+g^{ij}\Gamma_{ij}^{k}\partial_{k}v(a_{0},\theta)\}d\theta\\
&=O(r^{-(d-\epsilon+\mu)})
\end{aligned}$$
and $\partial(\Delta_{g_{\text {euc}}}\Pi_{0}v)=O(r^{-(d-1-\epsilon+\mu)})$.
From \cite[Lemma 3.2]{SY79PMT} there exists a constant $A$ such that
$$\Pi_{0}v=\frac{A}{r^{d-2}}+\omega_{0},\quad\text{where}\quad|\omega_{0}|+r|\partial\omega_{0}|+r^{2}|\partial^{2}\omega_{0}|=O(r^{-(d-1)}).$$
From the definition of $\Pi_{1}v$, for each $a_0\in \mathbf R^d-B$ there always exists a $\theta_{0}\in T^k$ such that $\Pi_{1}v(a_{0},\theta_{0})=0$. For any $\theta\in T^k$ we take a geodesic curve $\gamma$ from $\theta_{0}$ to $\theta$ with respect to the given flat metric on $T^{k}$. From \eqref{decay of v} we have
$$|\Pi_{1}v(a_{0},\theta)|\leq\int_{\{a_{0}\}\times \gamma}\left|\frac{\partial v}{\partial s}\right|\,\mathrm{d}s=O(r^{-(d-1-\epsilon)}).$$
Notice that there always exists a vanishing point on $T^k$ for derivatives $\partial_{\theta_\alpha}v$ and $\partial^2_{\theta_\alpha\theta_\beta}v$. The same argument leads to
 $$
 r|\partial(\Pi_{1}v)(a_{0},\theta)|+r^2|\partial^2(\Pi_{1}v)(a_{0},\theta)|=O(r^{-(d-1-\epsilon)}).
 $$
Denote $\omega=\omega_{0}+\Pi_{1}v$. Then we have
$$
u=1+\frac{A}{r^{d-2}}+\omega\quad,\quad|\omega|+r|\partial\omega|+r^{2}|\partial^2\omega|=O(r^{-(d-1-\epsilon)}).
$$
The expression \eqref{A} of $A$ follows from
integrating \eqref{main equation} on $U_{R}$ and taking $R\rightarrow\infty$.
\end{proof}

In the following, we will call $(M, \bar{g}, \mathcal{E})$ an {\it asymptotically Schwarzschild-like manifold with fiber $F$}, if it is an asymptotically flat manifold with fiber $F$ and there exist constants $A$ and $q >n-2$ such that 
$$\bar g=\left(1+\frac{A}{r^{d-2}}\right)^{\frac{4}{d+k-2}}(g_{\text {euc }}+g_{F})+\omega,$$
where
$$|w|+r|\partial w|+r^{2}|\partial^{2}w|=O(r^{-q})$$
on the end $\mathcal{E}$. In this case, the total mass is expressed by
$$
m(M,\bar g,\mathcal E)=\frac{2(d+k-1)(d-2)}{d+k-2}A.
$$

\begin{proposition}\label{conformal end} Assume that $(M, g, \mathcal{E})$ is an asymptotically flat manifold with fiber $T^{k}$ with nonnegative scalar curvature and total mass $m$. Then for any $\tilde\epsilon>0$, we can construct a new complete metric $\bar{g}$ on $M$ such that $(M, \bar{g}, \mathcal{E})$ is an asymptotically Schwarzschild-like manifold with fiber $T^{k}$, which has nonnegative scalar curvature and mass $\bar{m}$ satisfying $|\bar{m}-m| \leq \tilde\epsilon$.
\end{proposition}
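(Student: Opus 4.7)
I plan to adapt the Schoen--Yau conformal density argument to the fibered setting. Set $c_n=(d+k-2)/(4(d+k-1))$ and look for $\bar g=u^{4/(d+k-2)}g$ with $u>0$ produced by Proposition~\ref{3.2likeSY} applied to $f_\sigma=c_n R_g\eta_\sigma$, where $\eta_\sigma$ is a cut-off supported far out in $\mathcal E$. The conformal transformation law
\[
R_{\bar g}\,u^{\frac{d+k+2}{d+k-2}}=R_g u-c_n^{-1}\Delta_g u
\]
combined with $\Delta_g u_\sigma=f_\sigma u_\sigma$ and $R_g\geq 0$ gives $R_{\bar g_\sigma}=(1-\eta_\sigma)R_g u_\sigma^{-4/(d+k-2)}\geq 0$, while the asymptotic expansion of $u_\sigma$ furnished by Proposition~\ref{3.2likeSY} produces the Schwarzschild factor. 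The crucial design choice is to place $\eta_\sigma$ near infinity (rather than near the compact part): this makes the $A_\sigma$-term arbitrarily small, so that $\bar m_\sigma$ stays close to $m$.

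\textbf{Cut-off and solution.} For small $\sigma>0$ I pick $\eta_\sigma\in C_c^\infty(M,[0,1])$ equal to $1$ on $\{1/\sigma\leq r\leq 2/\sigma\}\subset\mathcal E$ and supported in a slightly larger annulus; then $f_\sigma=c_n R_g\eta_\sigma$ is nonnegative with compact support in a neighbourhood $U$ of $\mathcal E$. Hypothesis \eqref{f_{-}control} of Proposition~\ref{3.2likeSY} is automatic since $(f_\sigma)_-\equiv 0$, and \eqref{f control} follows uniformly in $\sigma$ from $R_g\in L^1(\mathcal E,g)$ and Hölder's inequality on the annular support. Applying Proposition~\ref{3.2likeSY} produces a positive smooth $u_\sigma$ on $M$ with
\[
u_\sigma=1+\frac{A_\sigma}{r^{d-2}}+\omega_\sigma,\qquad A_\sigma=-\frac{1}{(d-2)|\mathbf S^{d-1}|\vol(F,g_F)}\int_M f_\sigma u_\sigma\,\mathrm d\mu_g,
\]
and $|\omega_\sigma|+r|\partial\omega_\sigma|+r^2|\partial^2\omega_\sigma|=O(r^{-(d-1-\epsilon)})$. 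Outside the support of $\eta_\sigma$, $u_\sigma$ is $g$-harmonic; substituting the expansion into $\bar g_\sigma=u_\sigma^{4/(d+k-2)}g$ and combining with the decay of $g-g_0$ from Definition~\ref{Defn: AF with fiber F} (sharpened by a Bartnik-type harmonic coordinate change, see below) places $\bar g_\sigma$ in the asymptotically Schwarzschild-like class with coefficient $A_\sigma$.

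\textbf{Mass comparison.} Writing $\bar g_\sigma-g_0=[u_\sigma^{4/(d+k-2)}-1]g_0+u_\sigma^{4/(d+k-2)}(g-g_0)$ and plugging into Lemma~\ref{defn of mass}, the ADM-type flux splits by linearity into a Schwarzschild contribution from the conformal factor and the mass contribution from $g-g_0$, while the cross terms are of order $O(r^{-(\mu+d-1)})$ and integrate to zero on $S_\rho\times F$ in the limit. This yields the clean identity
\[
\bar m_\sigma=m+\frac{2(d+k-1)(d-2)}{d+k-2}\,A_\sigma.
\]
Because $\mathrm{supp}(\eta_\sigma)\subset\{r\geq 1/\sigma\}$ and $u_\sigma$ is uniformly bounded (by the Harnack/Schauder estimates from the proof of Proposition~\ref{3.2likeSY}), the $L^1$-integrability of $R_g$ gives $|A_\sigma|\leq C\int_{\{r\geq 1/\sigma\}}R_g\,\mathrm d\mu_g\to 0$ as $\sigma\to 0$. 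Choosing $\sigma$ small then forces $|\bar m_\sigma-m|\leq\tilde\epsilon$, completing the construction.

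\textbf{Main obstacle.} The most delicate step is the verification that $\bar g_\sigma$ genuinely satisfies the Schwarzschild-like definition with remainder exponent $q>n-2$. Neither the Proposition~\ref{3.2likeSY} estimate $\omega_\sigma=O(r^{-(d-1-\epsilon)})$ nor the Definition~\ref{Defn: AF with fiber F} decay $\mu>(d-2)/2$ of $g-g_0$ is sharp enough on its own when $k\geq 1$; both must be upgraded via a Bartnik-type harmonic coordinate adjustment in the fibered setting, relying on $R_g\in L^1(\mathcal E,g)$ and the weighted elliptic regularity afforded by the weighted Sobolev inequality of Corollary~\ref{Cor: Sobolev Lp}. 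A secondary point is ensuring that the other ends of $M$ contribute nothing to the asymptotic flux, which is guaranteed by the boundary identity \eqref{integral on boundary} that localizes the net flux of $u_\sigma$ to the end $\mathcal E$.
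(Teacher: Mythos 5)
The central step of your argument is to take $\bar g_\sigma=u_\sigma^{4/(d+k-2)}g$ with $u_\sigma$ coming from Proposition~\ref{3.2likeSY} applied to $f_\sigma=c_nR_g\eta_\sigma$, where $\eta_\sigma$ is a cut-off near infinity. This does give $R_{\bar g_\sigma}\geq 0$ and $A_\sigma\to 0$, and your mass-splitting calculation is essentially correct. However, you keep the \emph{original} metric $g$ under the conformal factor, and this is where the argument breaks. Since the proposition must produce an asymptotically \emph{Schwarzschild-like} metric with error $\omega=O(r^{-q})$ for $q>n-2$, while the hypothesis only grants $g-g_0=O(r^{-\mu})$ with $\mu>(d-2)/2$ (and in general $\mu$ is far smaller than $n-2$), the term $u_\sigma^{4/(d+k-2)}(g-g_0)$ in your decomposition still decays only like $O(r^{-\mu})$. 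No choice of conformal factor can erase this: it is a defect in the underlying metric, not in the factor. You acknowledge this under \emph{Main obstacle} and propose a ``Bartnik-type harmonic coordinate adjustment,'' but harmonic coordinates optimize regularity, not the decay rate of the error; they do not upgrade $O(r^{-\mu})$ to the Schwarzschild-like class. This is a genuine gap, not a technicality you can defer.

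The paper's proof fills precisely this gap by an extra truncation step \emph{before} the conformal deformation: one writes $g=(1+m_1/r^{d-2})^{4/(d+k-2)}g_0+\tilde g$ and sets $\hat g^s=(1+m_1/r^{d-2})^{4/(d+k-2)}g_0+(1-\zeta(r/s))\tilde g$, so that $\hat g^s$ coincides with the Schwarzschild-like model \emph{exactly} for $r\geq 3s$. The conformal factor (harmonic with respect to $\hat g^s$ far out) then only introduces an $O(r^{-(d-1-\epsilon)})$ error, which lands in the Schwarzschild-like class. The price is that $R(\hat g^s)$ may be slightly negative in the transition annulus $\{2s\leq r\leq 3s\}$; the paper controls this with the smallness estimates \eqref{R integral estimate}--\eqref{R_{-}integral}, a perturbation $\eta_sR(\hat g^s)-\delta_s\eta_s$ that keeps $-\tilde\Delta_{\mathrm{conf}}$ positive, and the shift $u_{s,\tau}=(u_s+\tau)/(1+\tau)$ to guarantee both $R(\bar g)\geq 0$ and completeness. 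Your proposal avoids all these complications only because it never cuts $\tilde g$ off, but as explained above, that is exactly why it cannot reach the Schwarzschild-like asymptotics. Finally, in the paper the smallness of $A_s$ is \emph{not} a direct consequence of $R(g)\in L^1$ on an annulus; it is obtained from the divergence-structure identity for the scalar curvature of $\hat g^s$ and the cancellation of boundary fluxes (\eqref{zeromasslimit} and the definition of the total mass), because the scalar curvature of the truncated metric in the transition region is created by the cut-off and is not controlled by $R(g)$ alone. Your much simpler $L^1$-tail bound works for the original $g$ but has no analogue for the truncated metric you actually need.
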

\begin{proof}First we can write the metric $g$ as
$$
g=\left(1+\frac{m_{1}}{r^{d-2}}\right)^{\frac{4}{d+k-2}} (g_{euc}+g_{T^{k}})+\tilde{g}$$
with
$$ m_1=\frac{d+k-2}{2(d+k-1)(d-2)}m.
$$
From Lemma \ref{defn of mass} we have
\begin{equation}\label{zeromasslimit}
\lim _{\rho \rightarrow+\infty}\int_{S_{\rho}\times T^{k}}(\partial_{i}\tilde{g}_{ij}-\partial_{j}\tilde{g}_{aa})\ast\mathrm{d}x^{j}\mathrm{d}\gamma=0.
\end{equation}
Take a fixed nonnegative cutoff function $\zeta: \mathbf{R} \rightarrow[0,1]$ such that $\zeta \equiv 0$ in $(-\infty, 2], \zeta \equiv 1$ in $[3,+\infty)$. With $s>1$ is a constant to be determined later, we define
$$
\hat{g}^{s}=\left(1+\frac{m_{1}}{r^{d-2}}\right)^{\frac{4}{d+k-2}} (g_{euc}+g_{T^{k}})+\left(1-\zeta\left(\frac{r}{s}\right)\right) \tilde{g}.
$$
Without loss of generality, we can assume that the asymptotic order in \eqref{Eq: decay 1} satisfies $\mu\leq d-2$. Clearly we have
$$
\left|\hat{g}^{s}-(g_{euc}+g_{T^{k}})\right|+r\left|\partial \hat{g}^{s}\right|+r^{2}\left|\partial ^{2}\hat{g}^{s}\right| \leq C r^{-\mu},
$$
where $C$ is always denoted to be a universal constant independent of $s$ here and in the sequel. In particular, the metrics $\hat{g}^{s}$ are uniformly equivalent to $g$ in some neighborhood $U$ of $\mathcal{E}$, and so the Sobolev inequality \eqref{sobelov on U} holds for all metrics $\hat{g}^{s}$ with a unifrom constant $C_{U}$ independent of $s$. It is easy to see that
$$R\left(\hat{g}^{s}\right) \geq 0 \quad \text{in}\quad \{r \leq 2 s\}; \quad R\left(\hat{g}^{s}\right) \equiv 0 \quad \text{in}\quad \{r \geq 3 s\} $$
and that $\left|R\left(\hat{g}^{s}\right)\right| \leq C s^{-(\mu+2)}$ in $ \{s \leq r \leq 4 s\}$. Then we have
\begin{equation}\label{R integral estimate}
\left(\int_{\{s \leq r \leq 4 s\}}\left|R\left(\hat{g}^{s}\right)\right|^{\frac{2 (d+k)}{d+k+2}}r^{\frac{2k(d+k-2)}{(d+k+2)^{2}}} \mathrm{~d} \mu_{\hat{g}^{s}}\right)^{\frac{d+k+2}{2 (d+k)}} \leq Cs^{-\frac{4k}{(d+k)(d+k+2)}} 
\end{equation}
and
\begin{equation}\label{R_{-}integral}
\left(\int_{U}\left|R\left(\hat{g}^{s}\right)_{-}\right|^{\frac{d+k}{2}} r^{k}\mathrm{~d} \mu_{g}\right)^{\frac{2}{d+k}}\leq C s^{-\frac{d-2}{2}}\leq \frac{1}{4C_{U}^{2}}
\end{equation}
for sufficiently large $s$.
Now we take another nonnegative cutoff function $\eta: \mathbf{R} \rightarrow[0,1]$ such that $\eta \equiv 0$ in $(-\infty, 1] \cup[4,+\infty)$ and $\eta \equiv 1$ in $[2,3] .$ Let $\eta_{s}(x)=\eta(r(x) / s) .$ It follows from \eqref{R_{-}integral}that for any $s$ we can take a small constant $\delta_{s}>0$ such that
$$
\left(\int_{U}\left|\left(\eta_{s} R\left(\hat{g}^{s}\right)- \eta_{s}\delta_{s}\right)_{-}\right|^{\frac{d+k}{2}}r^{k}\, \mathrm{d} \mu_{\hat{g}^{s}}\right)^{\frac{2}{d+k}} \leq  \frac{1}{2C_{U}^{2}}.
$$
From \eqref{R integral estimate} and taking $\delta_{s}$ small enough, we also have
\begin{equation}
\left(\int_{U}|\eta_{s} R\left(\hat{g}^{s}\right)- \eta_{s}\delta_{s}|^{\frac{2 (d+k)}{d+k+2}}  r^{\frac{2k(d+k-2)}{(d+k+2)^{2}}} \,\mathrm{d} \mu_{g}\right)^{\frac{d+k+2}{2(d+k)}} \leq Cs^{-\frac{4k}{(d+k)(d+k+2)}} 
\end{equation}
Based on Proposition \ref{3.2likeSY} we can construct a solution $u_{s}$ of the following equation
$$
\Delta_{\hat{g}^{s}} u_{s}-C_{d,k}\left(\eta_{s} R\left(\hat{g}^{s}\right)-\delta_{s} \eta_{s}\right) u_{s}=0,\quad C_{d,k}=\frac{d+k-2}{4(d+k-1)}.
$$
Moreover, $u_{s}$ has the expansion $u_{s}=1+A_{s}r^{-(d-2)}+O(r^{-(d-1-\epsilon)})$ for any small $\epsilon>0$, where
$$
A_{s}=-\frac{C_{d,k}}{(d-2)\left|\mathbf{S}^{d-1}\right||T^{k}|} \int_{U}\left(\eta_{s} R\left(\hat{g}^{s}\right)-\eta_{s}\delta_{s} \right) u_{s} \,\mathrm{d} \mu_{\hat{g}^{s}}
$$

We now show that $A_{s}$ is small if $s$ is large. To prove this, we consider the function $v_{s}=u_{s}-1$. From H\"{o}lder inequality, we have 
\begin{equation}\label{A_{s}term1}
\begin{aligned}
\left|\int_{U} \eta_{s} R\left(\hat{g}^{s}\right) u_{s} \,\mathrm{d} \mu_{\hat{g}^{s}}\right| 
&\leq\left(\int_{\{s \leq r\leq 4 s\}}\left|R\left(\hat{g}^{s}\right)\right|^{\frac{2(d+k)}{d+k+2}}r^{\frac{2k(d+k-2)}{(d+k+2)^{2}}} \,\mathrm{d} \mu_{\hat{g}^{s}}\right)^{\frac{d+k+2}{2 (d+k)}}\\
&\cdot\left(\int_{\{s \leq r \leq 4 s\}}\left|v_{s}\right|^{\frac{2 (d+k)}{d+k-2}} r^{\frac{-2k}{d+k-2}}\,\mathrm{d} \mu_{\hat{g}^{s}}\right)^{\frac{d+k-2}{2 (d+k)}} \\
&\qquad+\left|\int_{\{s \leq r \leq 4 s\}} \eta_{s} R\left(\hat{g}^{s}\right) \mathrm{d} \mu_{\hat{g}^{s}}\right|
\end{aligned}
\end{equation}
and
\begin{equation}\label{A_{s}term2}
\begin{split}
\left|\int_{U} \delta_{s} \eta_{s} u_{s} \,\mathrm{d} \mu_{\hat{g}_{s}}\right| &\leq C\delta_{s}s^{d}
+C\delta_{s}s^{\frac{2k(d+k-2)+d(d+k+2)^{2}}{2(d+k)(d+k+2)}}\\
&\cdot\left(\int_{\{s \leq r \leq 4 s\}}\left|v_{s}\right|^{\frac{2 (d+k)}{d+k-2}}r^{\frac{-2k}{d+k-2}}\, \mathrm{d} \mu_{\hat{g}^{s}}\right)^{\frac{d+k-2}{2 (d+k)}}.
\end{split}
\end{equation}
Recall from the proof of \eqref{estimate of v_{i,R}}, we have
$$
\begin{aligned}
&\left(\int_{\{s \leq r \leq 4 s\}}\left|v_{s}\right|^{\frac{2 (d+k)}{d+k-2}} r^{\frac{-2k}{d+k-2}}\,\mathrm{d} \mu_{\hat{g}^{s}}\right)^{\frac{d+k-2}{2 (d+k)}}\\ \leq& 2 C_{U}^{2}\left(\int_{U}\left|\eta_{s} R\left(\hat{g}^{s}\right)-\eta_{s}\delta_{s} \right|^{\frac{2 (d+k)}{d+k+2}}r^{\frac{2k(d+k-2)}{(d+k+2)^{2}}} \,\mathrm{d} \mu_{\hat{g}^{s}}\right)^{\frac{d+k+2}{2 (d+k)}}\\
\leq& 2 C_{U}^{2}Cs^{-\frac{4k}{(d+k)(d+k+2)}}  \rightarrow 0, \quad \text{as }\quad s\rightarrow \infty.
\end{aligned}
$$
Hence if we choose $\delta_{s}$ small enough, we see that the left hand side of \eqref{A_{s}term2} converges to zero as $s\rightarrow \infty$. Due to \eqref{R integral estimate}, we only need to estimate the left hand side of \eqref{A_{s}term1} to conclude $A_s\to 0$ as $s\to+\infty$.
Recall the divergence structure of scalar curvature up to higher order error (see \cite{Bartnik1986}). That is, with the coordinate system from Lemma \ref{defn of mass} we have
$$R(g)=|g|^{-\frac{1}{2}}\partial_{a}(g_{ab,b}-g_{bb,a})+O(r^{-(2+2\mu)}).$$
For the nonnegativity of $R\left(\hat{g}^{s}\right)$ outside $\{2 s \leq r \leq 3 s\}$, we conclude that there are two possibilities for the left hand side of \eqref{A_{s}term1}:

(i) if the integral of $(\eta_{s} R)_{+}$ larger than integral of $(\eta_{s} R)_{-}$, then
$$
\begin{aligned}
\left|\int_{\{s \leq r \leq 4 s\}} \eta_{s} R\left(\hat{g}^{s}\right) \mathrm{d} \mu_{\hat{g}^{s}}\right|&\leq
\left|\int_{\{s\leq r\leq 4s\}} R\left(\hat{g}^{s}\right) \mathrm{d} \mu_{\hat{g}^{s}}\right|\\
&\leq \left|\int_{\{r=4s\}}(\hat{g}^{s}_{ij,j}-\hat{g}^{s}_{aa,i})\ast\mathrm{d}x^{j}\mathrm{d}\gamma\right.\\
&\qquad-\left.\int_{\{r=s\}}(\hat{g}^{s}_{ij,j}-\hat{g}^{s}_{aa,i})\ast\mathrm{d}x^{j}\mathrm{d}\gamma\right|+ o(1).
\end{aligned}
$$

(ii) if the integral of $(\eta_{s} R)_{+}$ not larger than integral of $(\eta_{s} R)_{-}$, then
$$
\begin{aligned}
\left|\int_{\{s \leq r \leq 4 s\}} \eta_{s} R\left(\hat{g}^{s}\right) \mathrm{d} \mu_{\hat{g}^{s}}\right|&\leq
\left|\int_{\{2s\leq r\leq 3s\}} R\left(\hat{g}^{s}\right) \mathrm{d} \mu_{\hat{g}^{s}}\right|\\
&\leq \left|\int_{r=3s}(\hat{g}^{s}_{ij,j}-\hat{g}^{s}_{aa,i})\ast\mathrm{d}x^{j}\mathrm{d}\gamma\right.
\\
&-\left.\int_{r=2s}(\hat{g}^{s}_{ij,j}-\hat{g}^{s}_{aa,i})\ast\mathrm{d}x^{j}\mathrm{d}\gamma\right|+ o(1).
\end{aligned}
$$
Here the indices are used in the same way as in Lemma \ref{defn of mass}. In both cases, it follows from \eqref{zeromasslimit} and the definition of the total mass that
$$
\left|\int_{\{s \leq r \leq 4 s\}} \eta_{s} R\left(\hat{g}^{s}\right) \mathrm{d} \mu_{\hat{g}^{s}}\right|=o(1), \quad \text { as } \quad s \rightarrow \infty.
$$
Finally we conclude
$$
\left|\int_{U} \eta_{s} R\left(\hat{g}^{s}\right) u_{s} \mathrm{~d} \mu_{\hat{g}^{s}}\right|+\left|\int_{U} \delta_{s} \eta_{s} u_{s} \mathrm{~d} \mu_{\hat{g}_{s}}\right|=o(1), \quad \text { as } \quad s \rightarrow \infty.
$$
By taking $s$ large enough, we can guarantee $\left|A_{s}\right| \leq \frac{d+k-2}{2(d-2)(n-1+k)}\tilde\epsilon$ for any given $\tilde\epsilon>0$. Fix such an $s$ below.

Take
$$
u_{s, \tau}=\frac{u_{s}+\tau}{1+\tau}
$$
with $\tau$ a positive constant to be determined later. We consider the conformal deformation $\bar{g}=\left(u_{s, \tau}\right)^{\frac{4}{d+k-2}} \hat{g}^{s}$. A straightforward computation gives
$$
\begin{aligned}
R(\bar{g}) &=\left(u_{s, \tau}\right)^{-\frac{d+k+2}{d+k-2}}\left(-C_{d,k} \Delta_{\hat{g}^{s}} u_{s, \tau}+R\left(\hat{g}^{s}\right) u_{s, \tau}\right) \\
&=\left(u_{s, \tau}\right)^{-\frac{d+k+2}{d+k-2}}(1+\tau)^{-1}\left(\left(\left(1-\eta_{s}\right) R\left(\hat{g}^{s}\right)+\delta_{s} \eta_{s}\right) u_{s}+R\left(\hat{g}^{s}\right) \tau\right)
\end{aligned}
$$
Note that $R\left(\hat{g}^{s}\right)$ takes possible negative values only in $\{2 s \leq r \leq 3 s\}$, where the term $\left(\left(1-\eta_{s}\right) R\left(\hat{g}^{s}\right)+\delta_{s} \eta_{s}\right) u_{s}$ has a positive lower bound, so we can take $\tau$ small enough such that $R(\bar{g})$ is nonnegative everywhere. Since the function $u_{s, \tau}$ is no less than $\tau(1+\tau)^{-1}$, the metric $\bar{g}$ is still complete. In the region $\{r \geq 3 s\}$, the metric $\bar{g}$ can be expressed as
$$
\bar{g}=\left(u_{s, \tau}\right)^{\frac{4}{d+k-2}}\left(1+\frac{m_{1}}{r^{d-2}}\right)^{\frac{4}{d+k-2}} (g_{euc}+g_{T^{k}})=\left(1+\frac{m_{2}}{r^{d-2}}\right)^{\frac{4}{d+k-2}}(g_{euc}+g_{T^{k}})+w,
$$
where $|w|+r|\partial w|+r^{2}|\partial^{2}w|=O(r^{-(d-1-\epsilon)})$ for any small $\epsilon>0$ and $m_{2}=m_{1}+{A_{s}}({1+\tau})^{-1}$. So we have 
$$|m(M,g,\mathcal E)-m(M,\bar{g},\mathcal E)|=\frac{2(d-2)(d-1+k)}{(d+k-2)(1+\tau)}|A_{s}|\leq \tilde\epsilon.$$
This competes the proof.
\end{proof}

\begin{proof}[Proof for Theorem \ref{Thm: main 4}]
For the inequality part, we argue by contradiction. Suppose that $(M, g, \mathcal{E})$ has negative total mass $m$. Set $\tilde\epsilon=-m / 2$. From Proposition \ref{conformal end}  above we can construct a new complete metric $\bar{g}$ on $M$ such that $(M, \bar{g}, \mathcal{E})$ is an asymptotically Schwarzschild-like manifold with fiber $T^k$, which has nonnegative scalar curvature and total mass $\bar{m}$ no greater than $-m / 2 .$ Moreover, the metric $\bar{g}$ can be expressed as $\bar{g}=u^{\frac{4}{d+k-2}}(g_{euc}+g_{T^{k}})$ near the infinity, which is also scalar flat. Hence the function $u$ is a harmonic function near infinity with respect to the metric $g_{euc}+g_{T^{k}}$. Notice that $u$ has the expansion
$$
u=1+\frac{m_{2}}{r^{d-2}}+O(r^{-(d-1-\epsilon)}), \quad m_{2}<0,
$$
for any small $\epsilon>0$ be small, so we can take $s_{1}$ large enough such that $u<1$ on $\left\{r=s_{1}\right\}$. Denote
$$
\bar\epsilon
=1-\sup _{r(x)=s_{1}} u(x)
$$
It is clear that $u>1-\bar\epsilon / 4$ in $\left\{r \geq s_{2}\right\}$ for sufficiently large $s_{2}>s_{1}$. Take a cutoff function $\zeta:[0,+\infty) \rightarrow[0,1-\bar\epsilon / 2]$ such that $\zeta(t)=t$ when $t \leq 1-3 \bar\epsilon / 4$ and $\zeta(t)=1-\bar\epsilon / 2$ when $t \geq 1-\bar\epsilon / 4$. Moreover, we can also require $\zeta^{\prime} \geq 0$ and $\zeta^{\prime \prime} \leq 0$ in $[0,+\infty)$ as well as $\zeta^{\prime \prime}<0$ in $(1-3 \bar\epsilon/ 4,1-\bar\epsilon / 4)$. Let
$$
v=\left\{\begin{array}{cc}
\zeta \circ u, & r \geq s_{1} \\
u & r \leq s_{1}
\end{array}\right.
$$
Clearly $v$ is a smooth function defined on entire $M$ since $v$ equals to $u$ around $\left\{r=s_{1}\right\}$. A direct computation shows
$$
\Delta v=\zeta^{\prime \prime}|\nabla u|^{2}+\zeta^{\prime} \Delta u \leq 0 \quad \text { in } \quad\left\{r \geq s_{1}\right\}
$$
and further $\Delta v<0$ at some point in $\left\{s_{1}<r<s_{2}\right\}$, where $\Delta$ and $\nabla$ are Laplace and gradient operators with respect to the metric $g_{euc}+g_{T^{k}}$. Define
$$
\tilde{g}=\left(\frac{v}{u}\right)^{\frac{4}{d+k-2}} \bar{g}
$$
It is easy to verify that $\tilde{g}$ is a complete metric on $M$ with nonnegative scalar curvature (positive somewhere), which is exactly the metric $g_{euc}+g_{T^{k}}$ near the infinity of $\mathcal{E}$. In particular, we can close $(M, \tilde{g})$ around the infinity of $\mathcal{E}$ to construct a smooth complete metric on $T^{n} \#_{T^k} \bar M$ with nonnegative scalar curvature (positive somewhere) for some compactification $\bar M$ of $M$ by adding an inifnity $T^k$ to $M$. From our topological assumption such manifold cannot admit any complete metric with positive scalar curvature (see a further explanation in the proof of Theorem \ref{Thm: main 5} in next subsection) and we obtain a contradiction.

Now let us focus on the rigidity part under the assumption that the total mass of $(M, g, \mathcal{E})$ is zero. First we show that the scalar curvature of $g$ has to vanish everywhere . Otherwise, we can take a neighborhood $U$ of $\mathcal{E}$ such that $R(g)$ is positive at some point $p$ in $U$. Take a nonnegative cutoff function $\eta: M \rightarrow[0,1]$ with small compact support in $U$ such that $\eta \equiv 1$ around
point $p$. It follows from Proposition \ref{3.2likeSY} that there is a positive function $u$ solving
$$
\Delta_{g} u-C_{d,k} \eta R(g) u=0,
$$
which has the expansion $u=1+A r^{-(d-2)}+O\left(r^{-(d-1-\epsilon)}\right)$ with $A<0$.
Define
$$
\bar{g}=\left(\frac{u+1}{2}\right)^{\frac{4}{d+k-2}} g
$$
It is not difficult to verify that $(M, \bar{g}, \mathcal{E})$ is an asymptotically flat manifold with fiber $T^k$, which has nonnegative scalar curvature but negative total mass. 

Next we prove the Ricci flatness of the metric $g$. Otherwise we take a neighborhood $U$ of $\mathcal{E}$ such that $\operatorname{Ric}(g)$ does not vanish at some point $p$ in $U$. Take a nonnegative cutoff function $\eta: M \rightarrow[0,1]$ with compact support $S$ in $U$ such that $\eta \equiv 1$ around point $p$. After applying a variation argument to the first Neumann eigenvalue of conformal Laplace operator \cite[Lemma 3.3]{Kazdan82}, we can pick up a positive constant $\varepsilon$ small enough such that the metric $\bar{g}=g-\varepsilon \eta \operatorname{Ric}(g)$ satisfies

\begin{equation}\label{kazdan}\int_{S}\left|\nabla_{\bar{g}} \zeta\right|^{2}+C_{d,k} R(\bar{g}) \zeta^{2} \mathrm{~d} \mu_{\bar{g}}>c \int_{S} \zeta^{2} \mathrm{~d} \mu_{\bar{g}}, \quad \forall \zeta \neq 0 \in C^{\infty}(S),
\end{equation}
for some positive constant $c$. For $\varepsilon$ small, we still have a uniform  Sobolev constant of $U$ with respect to the metric $\bar{g}$, still denoted by $C_{U}$. Since $\bar{g}$ is almost equal to $g$, we can also require
$$
\left(\int_{U}\left|(R(\bar{g}))_{-}\right|^{\frac{d+k}{2}} r^{k}\,\mathrm{d} \mu_{\bar{g}}\right)^{\frac{2}{d+k}} \leq \frac{1}{4C_{U}^{2}}
$$
by further decreasing the value of $\varepsilon$. Let $\tilde{\eta}$ be another nonnegative cutoff function with compact support $\tilde{S}$ in $U$, which has a positive lower bound in the support of $\eta$. For $\delta$ be a constant small enough, we can construct a smooth positive function $u$ from Proposition \ref{3.2likeSY} solving
$$
\Delta_{\bar{g}} u-C_{d,k}(R(\bar{g})-\delta \tilde{\eta}) u=0
$$
The function $u$ has the expansion $u=1+A r^{-(d-2)}+O\left(r^{-(d-1-\epsilon)}\right)$ for any $\epsilon>0$ be small, and we expect $A$ to be negative when $\delta$ is sufficiently small. From integral by parts we have
$$
\begin{aligned}
(d-2)\left|\mathbf{S}^{d-1}\right| \vol(T^k,g_{T^k})A &=-\left(\int_{U}\left|\nabla_{\bar{g}} u\right|^{2}+C_{d,k}(R(\bar{g})-\delta \tilde{\eta}) u^{2} \,\mathrm{d} \mu_{\bar{g}}\right) \\
& \leq-\left(\int_{U}\left|\nabla_{\bar{g}} u\right|^{2}+C_{d,k} R(\bar{g}) u^{2}\, \mathrm{d} \mu_{\bar{g}}\right)+C_{d,k}\delta \int_{\tilde{S}} u^{2} \,\mathrm{d} \mu_{\bar{g}}.
\end{aligned}
$$
From \eqref{kazdan} and the fact $R(\bar{g}) \equiv 0$ outside $\tilde{S}$, it follows from \cite[Lemma 2.1]{Kazdan82} that
$$
\int_{U}\left|\nabla_{\bar{g}} \zeta\right|^{2}+C_{d,k} R(\bar{g}) \zeta^{2} \mathrm{~d} \mu_{\bar{g}}>\tilde{c} \int_{\tilde{S}} \zeta^{2} \mathrm{~d} \mu_{\bar{g}}, \quad \forall \zeta \neq 0 \in C^{\infty}(U),
$$
for some positive constant $\tilde{c}$ independent of $\delta$. So we can guarantee $A<0$ by taking $\delta$ small enough. As before, we consider the conformal metric
$$
\tilde{g}=\left(\frac{u+\tau}{1+\tau}\right)^{\frac{4}{d+k-2}} \bar{g}
$$
with $\tau$ a positive constant to be determined later. A straightfrward computation shows
$$
R(\tilde{g})=\left(\frac{u+\tau}{1+\tau}\right)^{-\frac{d+k+2}{d+k-2}}(1+\tau)^{-1}(\delta \tilde{\eta} u+\tau R(\bar{g})).
$$
In particular, we have $R(\tilde{g}) \geq 0$ if $\tau$ is chosen to be small enough. However, it is easy to verify that $(M, \tilde{g}, \mathcal{E})$ is an asymptotically flat manifold with fiber $T^k$ whose total mass is negative, which contradicts to the discussion above.

Now we can deduce that $M$ has only one end $\mathcal{E}$. Otherwise, the Cheeger-Gromoll splitting theorem (see \cite{CG1971}) yields that $(M, g)$ must be isometric to a Riemannian product manifold $N \times \mathbf{R}$ for a closed manifold $N$. In particular, each $N$-slice is totally geodesic. However, this is impossible since if such slice appears at the infinity of the asymptotically flat end $\mathcal{E}$, we can take a $S_r\times T^{k}$  touching this slice from outside, which contradicts to the maximum principle. Consider the universal covering $(\hat M,\hat g)$. 

It remains to show the flatness of $(M,g)$ and the discussion will be divided into two cases. If $k=\dim F\geq 2$, then we consider the universal covering $(\hat M,\hat g)$ of $(M,g)$. From the incompressible condition $(\hat M,\hat g)$ is a complete Ricci-flat manifold with one of its end diffeomorphic to $(\mathbf R^d-B)\times \mathbf R^k$, where the metric is asymptotic to the Euclidean metric at infinity. The standard volume comparison theorem tells us that $(M,g)$ must be flat.
In the case when $F=\mathbf S^1$, the lifting of $(M,g)$ may not lift the end $\mathcal E$ entirely to $(\mathbf R^d-B)\times \mathbf R$ and so the argument above cannot be applied. Instead we are going to use the Bochner method with a nice observation formulated as the following Proposition \ref{Ricci decay}. From Proposition \ref{Ricci decay} we see that there exist $d$ $g$-parallel one-forms $\alpha^{1},\cdots,\alpha^{d}$, which further yields that $(M,g)$ splits as the standard $\mathbf R^{d}\times \mathbf S^{1}$ (see for instance \cite[Page 945]{Minerbe2008}). 
\end{proof}
\begin{proposition}\label{Ricci decay}
If $(M,g,\mathcal E)$ is an asymptotically flat manifold with fiber $F=S^{1}$ and the unique end $\mathcal E$ such that 
\begin{equation}\label{Eq: metric decay}
\sum\limits^3_{k=0}r^k|\nabla_{g_0}^k (g-g_{0})|_{g_0}=O(r^{-\mu}), \quad\mu>\frac{d-2}{2},
\end{equation}
and
\begin{equation}\label{Eq: Ricci decay}
|\nabla_{g_0} Ric|_{g_0}+r|\nabla_{g_0} Ric|_{g_0}=O(r^{-n-\epsilon})\quad \text{for some}\quad\epsilon>0 .
\end{equation}
Let $\{x^{1},\cdots,x^{d},\theta\}$ be the given coordinate system of $(M,g,\mathcal E)$ at infinity, where $\theta$ is the parameter of $S^{1}$. Then there exists a coordinate system $\{y^{1},\cdots,y^{d},\theta\}$ outside a compact set such that
$\Delta_{g}y^{i}=0,\,i=1,\cdots,d$. 
Moreover, if we denote $\alpha^{i}=dy^{i}$, then we have
\begin{equation}\label{Eq: gb mass}
\sum\limits^{d}_{i=1}\int_{M}|\nabla_g\alpha^{i}|_g^{2}+\Ric(\alpha^{i},\alpha^{i})\mathrm{d}V_{g}=c(d,S^{1})m(M,g,\mathcal{E}),
\end{equation}
where $c(d,S^{1})=d|\mathbf S^{d-1}|\vol(S^{1},g_{S^{1}})$.
\end{proposition}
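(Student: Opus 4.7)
The plan is to proceed in three stages: construct harmonic coordinates at infinity by a perturbation ansatz, apply the Bochner--Weitzenb\"ock identity to the resulting harmonic $1$-forms $\alpha^i = dy^i$, and identify the boundary integral at infinity with the total mass up to the constant $c(d,S^1)$.

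For the coordinates, I would seek $y^i = x^i + u^i$ satisfying $\Delta_g y^i = 0$, which is equivalent to
\begin{equation*}
\Delta_g u^i = -\Delta_g x^i = g^{kl}\Gamma^i_{kl} = O(r^{-\mu-1}),
\end{equation*}
by the decay hypothesis \eqref{Eq: metric decay}. Using the weighted Sobolev inequality of Corollary \ref{Cor: Sobolev Lp} on $(\mathbf R^d-B)\times S^1$ together with an exhaustion argument in the spirit of Proposition \ref{3.2likeSY} and a standard Schauder bootstrap, one produces $u^i$ with $|u^i|+r|\partial u^i|+r^2|\partial^2 u^i|=O(r^{-(\mu-1)})$ (possibly losing a small amount of decay). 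Since $|\partial u^i|$ is then small at infinity, $(y^1,\ldots,y^d,\theta)$ is a coordinate system outside a compact set.

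With $\alpha^i = dy^i$ closed and coclosed, $d^\ast \alpha^i = -\Delta_g y^i = 0$, hence harmonic, the Bochner identity gives
\begin{equation*}
\tfrac{1}{2}\Delta_g |\alpha^i|_g^2 = |\nabla_g\alpha^i|_g^2 + \Ric(\alpha^i,\alpha^i).
\end{equation*}
Integrating over the exhaustion $M_\rho := M\setminus(\{r>\rho\}\times S^1)$, applying the divergence theorem, summing over $i$, and letting $\rho\to\infty$ reduces \eqref{Eq: gb mass} to the boundary identity
\begin{equation*}
\tfrac{1}{2}\sum_{i=1}^d\lim_{\rho\to\infty}\int_{S_\rho\times S^1}\partial_\nu |\alpha^i|_g^2\,\mathrm{d}S_g = c(d,S^1)\, m(M,g,\mathcal E).
\end{equation*}
Convergence of the bulk integrals uses $|\nabla_g\alpha^i|_g = O(r^{-\mu-1})$ (integrable against $r^{d-1}\mathrm{d}r\mathrm{d}\theta\mathrm{d}\sigma$ exactly because $\mu > (d-2)/2$) and the fast Ricci decay \eqref{Eq: Ricci decay}. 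The boundary identification proceeds by expanding, with $h := g - g_0$,
\begin{equation*}
\sum_{i=1}^d|\alpha^i|_g^2 = d - \sum_i h_{ii} + 2\sum_i \partial_i u^i + O(r^{-2\mu}),
\end{equation*}
and then using the leading order of the harmonic equation, $\Delta_{euc}u^i + \partial_\theta^2 u^i = \partial_b h_{ib}-\tfrac{1}{2}\partial_i h_{bb}$ (sum over all indices $b$), to rewrite the $\partial_\nu\partial_i u^i$-terms via tangential integration by parts on $S_\rho \times S^1$. After the fiber integration kills the $\partial_\theta$-tangential pieces, this converts $\sum_i \partial_\nu |\alpha^i|_g^2$ into the integrand $-2(\partial_i g_{ij}-\partial_j g_{aa})\nu^j$ of Lemma \ref{defn of mass} up to integrable error.

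The most delicate point I anticipate is ensuring that the fiber contribution $\partial_j g_{\theta\theta}$ inside $\partial_j g_{aa}$ emerges cleanly from the harmonic-coordinate computation: the naive expansion of $\sum_i|\alpha^i|_g^2$ only produces the Euclidean trace $\sum_i h_{ii}$, and $h_{\theta\theta}$ enters only through the $(\theta,\theta)$-component on the right-hand side of the harmonic equation for $u^i$. Tracking this carefully, while simultaneously controlling the $O(r^{-2\mu})$ cross terms to the required order under the decay hypotheses \eqref{Eq: metric decay}--\eqref{Eq: Ricci decay}, is where the constant $c(d,S^1) = d|\mathbf S^{d-1}|\vol(S^1,g_{S^1})$ must naturally appear.
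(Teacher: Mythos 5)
Your first two stages match the paper: harmonic coordinates $y^i$ are produced by solving $\Delta_g u^i=\Delta_g(\chi x^i)$ with decay $O(r^{1-\mu'})$, $\mu'>\frac{d-2}{2}$ (the paper uses Minerbe's weighted spaces rather than an exhaustion, but either works), and the Bochner identity reduces \eqref{Eq: gb mass} to evaluating $\tfrac12\sum_i\lim_\rho\int_{S_\rho\times S^1}\partial_\nu|\alpha^i|_g^2$. The gap is in the third stage. Your plan is to expand $\sum_i|\alpha^i|_g^2=d-\sum_i h_{ii}+2\sum_i\partial_i u^i+O(r^{-2\mu})$ and convert $\sum_i\int_{S_\rho}\partial_\nu\partial_iu^i$ using only the \emph{leading-order} harmonic equation $\Delta_{g_0}u^i=\partial_b h_{ib}-\tfrac12\partial_i h_{bb}+O(r^{-2\mu-1})$. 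This determines $\lim_\rho\int_{S_\rho}(\partial_\nu(\sum_i\partial_iu^i)-\nu^jF_j)$, with $F_j=\partial_a h_{aj}-\tfrac12\partial_j h_{aa}$, only up to an additive constant equal to the flux discrepancy $\int_{S_{\rho_0}}(\partial_\nu(\sum_i\partial_iu^i)-\nu^j\Delta_{g_0}u^j)\,\mathrm d\sigma$ through any intermediate slice; since $\nu^j\partial_j\partial_iu^i$ and $\nu^i\partial_j\partial_ju^i$ are different contractions, this constant is nonzero in general and is not fixed by the equation. Even granting it were zero, the bookkeeping yields $\lim\int(\partial_ah_{aj}-\partial_jh_{aa}+\tfrac12\partial_jh_{\theta\theta})\nu^j$, i.e.\ $2|\mathbf S^{d-1}|\vol(S^1)\,m$ plus the undetermined quantity $\tfrac12\lim\int\partial_jh_{\theta\theta}\nu^j$ --- an integral of an $O(r^{-\mu-1})$ quantity over a surface of area $\sim\rho^{d-1}$, which under $\mu>\frac{d-2}{2}$ alone need not even converge, and which must carry essentially the entire factor $(d-2)$ separating $2|\mathbf S^{d-1}|\vol(S^1)m$ from $c(d,S^1)m$. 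So the crux you correctly flag is not resolved by the proposed tangential integration by parts.

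The missing idea, which is the actual content of the fast Ricci decay hypothesis \eqref{Eq: Ricci decay} (not merely convergence of the bulk integrals), is a second use of Bochner: $\tfrac12\Delta_g g^{ij}=g(\nabla dy^i,\nabla dy^j)+\Ric(dy^i,dy^j)=O(r^{-d-\epsilon_2})$, so the weighted-space expansion theory (Minerbe's Proposition 4) gives the refined asymptotics $g^{ij}=\delta_{ij}-c_{ij}r^{2-d}+v^{ij}$ with a symmetric coefficient matrix and $v^{ij}$ decaying strictly faster than $r^{2-d}$ with derivatives. This expansion does three things at once: it evaluates the Bochner flux exactly as $\tfrac{d-2}{2}(\sum_i c_i)|\mathbf S^{d-1}|\vol(S^1)$; written in the $y$-coordinates, the harmonicity constraint $\Delta_g y^i=0$ becomes $(d-2)(c_i-\tfrac12\sum_jc_j)\frac{y^i}{|y|^d}+\partial_\theta g_{i\theta}+\tfrac12\partial_ig_{\theta\theta}=o(r^{1-d})$, which pins down the otherwise inaccessible fiber term $\lim\int\partial_ig_{\theta\theta}\nu^i$ in terms of $\sum_i c_i$; and the spherical average $|\mathbf S^{d-1}|^{-1}\int_{\mathbf S^{d-1}}\nu^i\nu^j=\tfrac{1}{d}\delta^{ij}$ applied to the $c_{ij}r^{2-d}$ term is what produces the factor $d$ in $c(d,S^1)$. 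One also needs Bartnik's theorem that the mass is unchanged under the coordinate change $x\mapsto y$. Without this expansion the two limits (Bochner flux and mass integral) both exist but cannot be matched.
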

\begin{proof}
The proof is based on the theory of weighted spaces established in \cite{Minerbe2008} and here we just sketch the analysis part but emphasize on the difference. Without loss of generality, we can assume $\mu\leq d-2$. As in \cite{Minerbe2008}, to find desired functions $y^i$, we try to solve the equation $\Delta_g u^i=\Delta_g(\chi x^i)$ for some suitable cut-off function $\chi$ and then take $y^i=\chi x^i-u^i$.
%for the existence, we see that 
%$$\Delta_{g}x^{i}=\partial_{a}g^{ai}+\frac{\partial_{a}det(g)}{2det(g)}g^{ai}=O(r^{-\mu-1})$$
%where $a=1,2,\cdots,d,\theta$. Hence 
From the metric decay \eqref{Eq: metric decay}, we compute $\Delta_gx^i=O(r^{-\mu-1})$ and this yields $\Delta_{g}(\chi x^{i})\in L_{\delta'-2}^{2}(M)$ for any $\delta'>1+\frac{d}{2}-\mu$. %$\chi$ is cut off function that equals $0$ in $M-\mathcal{E}$, and equals to $1$ in $(R^{d}-B_{2})\times S^{1}$, here a $f\in L_{\delta}^{2}(M)$, $\delta\in R$ means
%$$||f||_{L_{\delta}^{2}(M)}=\left(\int_{M}f^{2}r^{-2\delta}\mathrm{d}V_{g} \right)^{\frac{1}{2}}<\infty$$
It then follows from \cite[Corollary 2]{Minerbe2008} that there exists a function $u^{i}\in H_{\delta'}^{2}(M)$ such that $\Delta_{g}u^{i}=\Delta_{g}(\chi x^{i})$. The Moser iteration (see \cite[Page 942, (23)]{Minerbe2008}) yieds
%the definition of $H_{\delta'}^{2}(M)$ refers to Page 930 in \cite{Minerbe2008}.
%from  Page 942 (23) in \cite{Minerbe2008}, we have 
$$|u^{i}|= O(r^{1-\mu'}), \quad\mu'= \min\left\{\frac{d}{2}-\delta'+1,\mu\right\}.$$
Fix $\epsilon_{1}>0$ small enough such that if we take $\delta'=1+\frac{d}{2}-\mu+\epsilon_{1}$, then $\mu'=\mu-\epsilon_{1}>\frac{d-2}{2}$. Similar as in step 2 of the proof of Proposition \ref{3.2likeSY}, the Schauder estimate yields
$$r|\partial u^i|+r^{2}|\partial^{2} u^i|= O(r^{1-\mu'})$$
With a similar analysis on the derivative of $\Delta_{g}u^{i}=\Delta_{g}x^{i}$, we further obtain
$$r^{3}|\partial^{3}u^{i}|= O(r^{1-\mu'}),$$
here and above the partial derivative is taken with respect to the coordinate system $\{x^{1},\cdots,x^{d},\theta\}$.

Now let us consider $y^{i}=\chi x^{i}-u^{i}$. These estimates imply that $\{y^{1},\cdots,y^{d},\theta\}$ form a coordinate system outside a large compact subset. A direct calculation shows that with respec to the coordinate system $\{y^{1},\cdots,y^{d},\theta\}$ the metric $g$ has the form
$
g=g_{0}'+\omega,
$ 
where $g_{0}'=dy^{2}\oplus g_{S^{1}}$ and $\omega$ satisfies
$$\sum\limits^2_{k=0}r^k|\nabla_{g'_0}^k \omega|_{g_0'}=O(r^{-\mu}), \quad\mu>\frac{d-2}{2}$$
In the following, we work in this new coordinate system and the partial derivative will be taken with respect to $\{y^{1},\cdots,y^{d},\theta\}$. In particular, $\partial_i$ means $\partial_{y_i}$. Let us also denote $g_{ij}=g(\frac{\partial}{\partial y^{i}},\frac{\partial}{\partial y^{j}})$ and $r=|y|$.

It remains to show \eqref{Eq: gb mass}. From the Bochner formula as well as $\Delta_{g}y^{i}=0$, we have 
\begin{equation}\label{Bochner0}
\Delta_g\left(\frac{1}{2}|dy^{i}|_g^{2}\right)=|\nabla_g dy^{i}|_g^{2}+\Ric(dy^{i},dy^{i})
\end{equation}
and
$$
\frac{1}{2}\Delta_{g}g^{ij}=g(\nabla_g dy^{i},\nabla_g dy^{j})+\Ric(dy^{i},dy^{j}).
$$
From the Ricci decay \eqref{Eq: Ricci decay} we see $\Delta_{g}(g^{ij}-\delta_{ij})=O(r^{-d-\epsilon_{2}})$, where $\epsilon_{2}=\min\{\epsilon,2\mu'+2-d\}>0$. From above discussion we conclude that
$$g^{ij}-\delta_{ij}\in L^{2}_{\delta}(M) \quad\text{for any}\quad \delta>\frac{d}{2}-\mu',$$
and
$$\Delta_{g}(g^{ij}-\delta_{ij})\in L^{2}_{\delta''-2}(M) \quad\text{for any}\quad\delta''>2-\frac{d}{2}-\epsilon_{2}.$$
It follows from \cite[Proposition 4]{Minerbe2008} and \cite[Proposition 4]{Minerbe2008} that
if we can choose $\delta$ and $\delta''$ such that 
\begin{equation}\label{Eq: order picking}
1-\frac{d}{2}\leq\delta''<2-\frac{d}{2}<\delta\leq \frac{d}{2},
\end{equation}
then we have the expansion
$$g^{ij}=\delta_{ij}-c_{ij}r^{2-d}+v^{ij},$$
where $c_{ij}$ are constants with $c_{ij}=c_{ji}$ and the functions $v^{ij}$ belongs to $ H^{2}_{\delta''}(M)$. Recall that $\mu'\leq \mu\leq d-2$ and $\delta''$ can be chosen to be arbitrarily close to $2-\frac{d}{2}$, the inequality \eqref{Eq: order picking} is fairly easy to hold. After repeating the argument in estimates for $u^i$, we see that $v^{ij}$ are higher-order error terms satisfying
%from Page 942 (23) in \cite{Minerbe2008}, we have 
$$\sum_{k=0}^2r^k|\partial^kv^{ij}|=O(r^{2-d-\epsilon_{3}}),\quad\text{where}\quad \epsilon_{3}=2-\frac{d}{2}-\delta''>0.$$
After a possible orthogonal transformation of $\{y^{1},\cdots,y^{d}\}$, we can assume $c_{ij}=c_{i}\delta_{ij}$ without loss of generality. Finally we have
$$
g_{ij}=\delta_{ij}+c_{i}\delta_{ij}r^{2-d}+w_{ij},\quad g_{i\theta}=w_{i\theta},\quad g_{\theta\theta}=1+w_{\theta\theta},
$$
such that
$$\sum\limits^2_{k=0}r^{k}|\partial^{k}w_{ij}|=O(r^{2-d-\epsilon_{3}})$$
and
$$ \sum\limits^2_{k=0}r^{k}|\partial^{k}w_{i\theta}|+r^{k}|\partial^{k}w_{\theta\theta}|=O(r^{-\mu'}).$$
It follows from \cite[Theorem 4.2]{Bartnik1986} that the total mass $m(M,g,\mathcal E)$ calculated with respect to $\{y^{1},\cdots,y^{d},\theta\}$ is the same as that with respect to $\{x^{1},\cdots,x^{d},\theta\}$. Recall that we have $\Delta_{g}y^{i}=0$. A straightforward computation yields
\begin{equation}\label{theta expression}
(d-2)\left(c_{i}-\frac{1}{2}\mathop{\Sigma}\limits_{j=1}^{d}c_{j}\right)\frac{y^{i}}{|y|^{d}}+\partial_{\theta}g_{i\theta}+\frac{1}{2}\partial_{i}g_{\theta\theta}+O(r^{-\mu''})=0,
\end{equation}
where $\mu''=\min\{n-1+\epsilon_{3},2\mu'+1\}>d-1$.
Integrating \eqref{theta expression} and suming over $i$, we obtain
$$\lim_{\rho\to+\infty}\int_{S_\rho\times S^{1}}(-\partial_{i}g_{\theta\theta})\ast\mathrm{d}y^{i}\mathrm{d}\theta=\lim_{\rho\to+\infty}\int_{S_\rho\times S^{1}}-\frac{(d-2)^{2}}{d}\left(\sum\limits_{i=1}^{d}c_{i}\right)\rho^{1-d}\mathrm{d}S_{\rho}\mathrm{d}\theta.$$
Therefore we have
\begin{displaymath} 
\begin{aligned}	
m(M,g,\mathcal E)&=\frac{1}{2|\mathbf S^{d-1}|\vol(S^{1},g_{S^{1}})}\lim_{\rho\to+\infty}\int_{S_\rho\times S^{1}}(\partial_{j}g_{ij}-\partial_{i}g_{aa})\ast\mathrm{d}y^{i}\mathrm{d}\theta\\
&=\frac{1}{d-1}\cdot\frac{1}{2|\mathbf S^{d-1}|\vol(S^{1},g_{S^{1}})}\lim_{\rho\to+\infty}\int_{S_\rho\times S^{1}}(\partial_{j}g_{ij}-\partial_{i}g_{jj})\ast\mathrm{d}y^{i}\mathrm{d}\theta.
\end{aligned}
\end{displaymath}
Now let us integrate both sides of \eqref{Bochner0} and take the sum over $i$. This gives
\begin{displaymath} 
\begin{aligned}	
\sum\limits^{d}_{i=1}\int_{M}|\nabla_g\alpha^{i}|_g^{2}+\Ric(\alpha^{i},\alpha^{i})\mathrm{d}V_{g}&=\sum\limits^{d}_{i=1}\lim_{\rho\to+\infty}\int_{B_\rho\times S^{1}}\frac{1}{2}\Delta_g(|dy^{i}|_g^{2})\mathrm{d}V_{g}\\
&=\sum\limits^{d}_{i=1}\lim_{\rho\to+\infty}\int_{S_\rho\times S^{1}}\frac{1}{2}\partial_{j}g^{ii}\ast \mathrm{d}y^{i}\mathrm{d}\theta\\
&=\frac{d-2}{2}\left(\sum\limits^{d}_{i=1}c_{i}\right)|\mathbf S^{d-1}|\vol(S^{1},g_{S^{1}})\\
&=d\cdot|\mathbf S^{d-1}|\vol(S^{1},g_{S^{1}}) m(M,g,\mathcal E),
\end{aligned}
\end{displaymath}
and we complete the proof.
\end{proof}

\subsection{Compactification with quasi-spherical metric and Proof for Theorem \ref{Thm: main 5}} For asymptotically conical manifolds with fiber $F$, Lohkamp compactification based on the conformal deformation is no longer valid since the expansion of a harmonic function on $\mathbf R^2$ is rather difficult due to its blow-up at infinity. As an alternative, we will use the quasi-spherical metric to complete the compactification. Quasi-spherical metric was first introduced by Bartnik in \cite{Bartnik1993} and later used by the third named author and Tam \cite{ST2002} to prove the nonnegativity of the Brown-York mass. Its use in the proof of positive mass theorems (angle estimates) is a new observation in this paper.

Let $(M,g)$ be an asymptotically conical manifold with fiber $T^{n-2}$. Notice that the end $\mathcal E$ (diffeomorphic to $(\mathbf R^2-B)\times T^{n-2}$) can be compactified to be $(\mathbf S^2-B)\times T^{n-2}$ by adding a $T^{n-2}$ at infinity. So we can denote $\bar M$ to be the corresponding compactification of $M$ and $i_2:T^{n-2}\to \bar M$ to be the embedding mapping onto the infinity $T^{n-2}$ in $\bar M$. Let us start with the following
\begin{proposition}\label{Prop: PMT to PSC 2}
If the asymptotically conical manifold $(M,g,\mathcal E)$ with fiber $T^{n-2}$ has nonnegative scalar curvature and its angle at inifnity is (strictly) greater than $2\pi$, then the generalized connected sum $(T^n,i)\#_{T^{n-2}}(\bar M,i_2)$ admits a complete metric with positive scalar curvature.
\end{proposition}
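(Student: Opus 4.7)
The idea is to construct a complete metric of positive scalar curvature on $M':=(T^n,i)\#_{T^{n-2}}(\bar M,i_2)$ by exploiting the strict cone-angle defect $\beta>1$. Topologically $M'$ decomposes as $M_1\cup M_2$, where $M_1$ is $M$ with the end $\mathcal{E}$ truncated at some large radius $R_0$, and $M_2=T^{n-2}\times(T^2\setminus D^2)$ is the ``cap'' glued along the common interface $T^{n-2}\times\mathbf{S}^1$. The strategy is to retain the original metric $g$ on $M_1$ (which has $R(g)\ge 0$) and to build on $M_2$ a metric that (i) matches in induced metric on the interface and (ii) has nonnegative scalar curvature, using the cone-angle excess $\beta-1>0$ to produce a favorable mean-curvature defect at the interface.

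On $M_2$, equip the ambient $T^n$ with a flat metric in which $i(T^{n-2})$ has a geodesic tubular neighborhood of the form $T^{n-2}\times D^2$ with $D^2$ a flat disk of radius $\beta R_0$; this only requires the $T^2$-factor to be chosen with fundamental domain larger than $2\beta R_0$, which is free. The restricted flat product metric on $M_2=T^{n-2}\times(T^2\setminus D^2)$ is scalar-flat, and the induced metric on its boundary $\partial D^2\times T^{n-2}$ is $(\beta R_0)^2 d\theta^2+g_F$, matching the leading-order boundary metric of $M_1$ at $\{r=R_0\}$. The outward mean curvatures compute to $H_1=1/R_0+O(R_0^{-1-\mu})$ from the $M_1$-side (cone) and $H_2=-1/(\beta R_0)$ from the $M_2$-side (outward normal pointing into the removed disk), giving the crucial positive defect
\begin{equation*}
H_1+H_2=\frac{1}{R_0}-\frac{1}{\beta R_0}+o(R_0^{-1})=\frac{\beta-1}{\beta R_0}+o(R_0^{-1})>0.
\end{equation*}

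To smooth this piecewise metric into a $C^\infty$ metric on $M'$ with $R>0$, use the quasi-spherical ansatz near the interface. On a collar $[R_0,R_0+L]\times\mathbf{S}^1\times T^{n-2}$ (placed on the cap side of the interface), consider a metric $g_{\mathrm{qs}}=u(r)^{2}dr^{2}+\phi(r)^{2}d\theta^{2}+g_F$; with $\phi$ preselected to satisfy $\phi(R_0)=\beta R_0$ and suitable concavity, the scalar curvature prescription $R(g_{\mathrm{qs}})=\sigma\ge 0$ reduces to a first-order Bartnik-type ODE for $u$. The initial datum is read off from the mean curvature discrepancy: the quasi-spherical identity, applied through the boundary, converts $H_1+H_2>0$ into a sub-solution condition for the ODE, guaranteeing a positive solution $u$ on the collar and hence a smooth metric that is $C^1$-matched to $g$ at $\{r=R_0\}$ and, at the outer end of the collar, $C^1$-matched to the flat metric on the bulk of $M_2$. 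Because $\sigma>0$ can be chosen throughout the collar, the resulting global metric on $M'$ has $R\ge 0$ with $R>0$ on an open set; a final conformal deformation (whose solvability follows from the positivity of the conformal Laplacian on a complete manifold with $R\ge 0$ not identically zero) upgrades this to $R>0$ pointwise.

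\emph{Main obstacle.} The crux is to arrange the quasi-spherical collar so that the ODE for $u$ has a positive solution on the entire collar while realizing both the matching with $g$ at $r=R_0$ and a smooth transition to the flat bulk cap at the outer end. The strict inequality $\beta>1$ is precisely what provides the slack needed: it forces $H_1+H_2>0$, which is the sub-solution condition keeping $u$ away from $0$, and in the borderline case $\beta=1$ the quasi-spherical problem degenerates, reflecting the flat rigidity predicted by Theorem~\ref{Thm: main 5}. A secondary technicality is replacing the only approximately-cone boundary data of $g$ (with $O(r^{-\mu})$ error from~\eqref{Eq: decay 2}) with exact cone data, which is absorbed into the lower-order terms above by taking $R_0$ large and noting that the positive principal contribution $(\beta-1)/(\beta R_0)$ dominates the error.
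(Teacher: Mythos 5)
Your overall strategy --- truncate $M$ at a large radius, cap off with a flat $T^{n-2}\times(T^2-D^2)$, and exploit the mean-curvature defect $(\beta-1)/(\beta R_0)>0$ at the interface via Miao's corner framework, followed by smoothing and a conformal upgrade to $R>0$ --- is the same as the paper's, and your sign bookkeeping for the two mean curvatures is correct. The gap is in how you treat the matching of induced metrics at the interface, which you dismiss as a ``secondary technicality''. The induced metric of $g$ on $\{r=R_0\}$ is $\beta^2R_0^2\,\mathrm d\theta^2+g_F+\eta_{R_0}$ with $\eta_{R_0}=O(R_0^{-\mu})$ generically depending on $\theta$ and the fiber coordinates, whereas every slice of your quasi-spherical collar $u(r)^2\mathrm dr^2+\phi(r)^2\mathrm d\theta^2+g_F$ is exactly round. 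So your glued metric is not even continuous across $\{r=R_0\}$: there is a tangential $C^0$ jump of size $O(R_0^{-\mu})$. Miao's distributional scalar curvature and smoothing procedure require the induced metrics on the two sides of the corner to agree exactly (the glued metric must be Lipschitz with matching boundary metrics); a tangential discontinuity contributes distributional curvature with no sign, which cannot be dominated by the mean-curvature jump no matter how large $R_0$ is. This mismatch is therefore not an error term ``absorbed into the lower-order terms'' --- it is a failure of the hypotheses of the corner framework, and your rotationally symmetric ansatz with an ODE for $u(r)$ cannot repair it.

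The paper's proof is organized precisely around this point. It interpolates the slice metrics across a collar, $\bar g_r=\beta^2r^2\mathrm d\theta^2+g_{flat}+\zeta(r)\eta_r$ for $r_0\le r\le 2r_0$, so that the collar matches $g$ exactly at the inner corner and is exactly the flat product at the outer corner; the price is that the quasi-spherical unknown $u$ must depend on all variables and satisfies the parabolic equation \eqref{Eq: QS equation} on the slices rather than an ODE. One then needs the quantitative bound $u\le 1+O(r_0^{-\mu})$, obtained by comparison with an explicit ODE solution using $R(\bar g_r)=O(r^{-2-\mu})$ and $H_r=r^{-1}+O(r^{-1-\mu})$, to conclude that the outer mean curvature $\tfrac{1}{2r_0u}$ still strictly exceeds $\tfrac{1}{2r_0\beta}$; this is exactly where $\beta>1$ is consumed. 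If you wish to keep a single-corner picture, you must either prove such an interpolation with controlled scalar curvature yourself or first deform $g$ near $\{r=R_0\}$ to exactly round boundary data, which is the same work in disguise. A minor further point: the glued space may be non-compact ($M$ is allowed arbitrary other ends), so the final upgrade to strict positivity should invoke Kazdan's deformation theorem for complete manifolds rather than a closed-manifold conformal argument.
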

\begin{proof}
Let $r$ be the radical distance function on $\mathbf R^2$ and we denote $S_r$ to be the hypersurface $C_r\times T^{n-2}$ in $M$ for $r >1$, where $C_r$ is the circle in $\mathbf R^2$ with radius $r$ centered at the origin. Denote $g_{flat}$ to be the flat metric on $T^{n-2}$. We can write
$$
g_\beta|_{S_r}=\beta^2r^2\mathrm d\theta^2+g_{flat}+\eta_r,
$$
where the decay condition \eqref{Eq: decay 2} implies
\begin{equation}\label{Eq: eta r}
|\eta_r|_{g_\beta}+r|\nabla_{g_\beta}\eta_r|_{g_\beta}+r^2|\nabla_{g_\beta}^2\eta_r|_{g_\beta}=O(r^{-\mu}).
\end{equation}
Fix a large positive constant $r_0$ to be determined later and pick up a cut-off function $\zeta:[r_0,2r_0]\to [0,1]$ such that it takes value $1$ around $r_0$ and value $0$ around $2r_0$ as well that its derivative satisfies $|\zeta'|\leq 4r_0^{-1}$. For $r_0\leq r\leq 2r_0$, we define
$$
\bar g_r=\beta^2r^2\mathrm d\theta^2+g_{flat}+\zeta(r)\eta_r.
$$
With $u$ a positive smooth function to be determined later, we introduce the following metric
\begin{equation*}
\bar g=\left\{
\begin{array}{cc}
g_\beta,&r\leq r_0;\\
u^2\mathrm dr^2+\bar g_r,&r_0\leq r\leq 2r_0;\\
\beta^2g_{euc}\oplus g_{flat}, &r\geq 2r_0.
\end{array}
\right.
\end{equation*}
Actually we hope that this metric $\bar g$ is a piecewisely smooth metric on $M$ with nonnegative scalar curvature in the distribution sense of Miao \cite{Miao2002}, which has exactly two corners along the hypersurfaces $S_{r_0}$ and $S_{2r_0}$. This motivates us to solve the following quasi-spherical metric equation
\begin{equation}\label{Eq: QS equation}
H_r\frac{\partial u}{\partial r}=u^2\Delta_{\bar g_r}u+\frac{1}{2}R(\bar g_r)(u-u^3)-\frac{1}{2}R(\bar g_{ref})u,\quad r_0\leq r\leq 2r_0
\end{equation}
with boundary value
\begin{equation}\label{Eq: initial condition}
u(r_0,\cdot)=\frac{H_{r_0}}{H_{r_0,g_\beta}}\quad\text{on}\quad S_{r_0},
\end{equation}
where $\bar g_{ref}$ is the reference metric $\mathrm dr^2+\bar g_r$, $H_r$ is the mean curvature of $S_r$ with respect to $\bar g_{ref}$ and $H_{r_0,g_\beta}$ is the mean curvature of $S_{r_0}$ with respect to the metric $g_\beta$. Geometrically, the equation \eqref{Eq: QS equation} with condition \eqref{Eq: initial condition} matches the mean curvature on two sides of the corner along $S_{r_0}$ and guarantees that $\bar g$ has vanishing scalar curvature when $r_0\leq r\leq 2r_0$. In order to control the mean curvature at the second corner, we need to make a careful analysis on the equation above. From \eqref{Eq: eta r} we can compute
$$
H_r=r^{-1}+O(r^{-1-\mu}),\quad R(\bar g_r)=O(r^{-2-\mu})\quad \text{and}\quad R(\bar g_{ref})=O(r^{-2-\mu}).
$$
Notice that both $H_{r_0}$ and $H_{r_0,g_\beta}$ have the order $r^{-1}_0+O(r_0^{-1-\mu})$. So the initial value satisfies $u(r_0,\cdot)=1+O(r_0^{-\mu})$. Denote $v(r)$ to be the solution of the following ordinary differential equation
$$
\frac{\mathrm dv}{\mathrm dr}=\frac{1}{2}f(r)(v-v^3)-\frac{1}{2}g(r)v,\quad v(r_0)=\max_{S_{r_0}}u(r_0,\cdot).
$$
where
$$
f(r)=\max_{S_r}\frac{R(\bar g_r)}{H_r}\quad\text{and}\quad g(r)=\min_{S_r}\frac{R(\bar g_{ref})}{H_r}.
$$
It is not difficult to solve this equation and obtain
$$
v(r)=\left[\left(v^{-2}(r_0)+\int_{r_0}^rf(\tau)e^{-\int_{r_0}^\tau(g(s)-f(s))\mathrm ds}\mathrm d\tau\right)e^{\int_{r_0}^r(g(s)-f(s))\mathrm ds}\right]^{-2}.
$$
Based on the estimates above we conclude $v=1+O(r_0^{-\mu})$, and then it follows from the maximum principle that
$
u\leq 1+O(r^{-\mu}_{0}).
$ Now we focus on the mean curvatures on two sides of the second corner along $S_{2r_0}$. Clearly the mean curvature of $S_{2r_0}$ with respect to the product metric $\beta^2g_{euc}\oplus g_{flat}$ is
$$
H_{2r_0,prod}=\frac{1}{2r_0\beta},
$$
and the mean curvature of $S_{2r_0}$ with respect to the reference metric $\bar g_{ref}$ satisfies
$$
H_{2r_0}=\frac{1}{2r_0 u}\geq \frac{1}{2r_0}+O(r_0^{-1-\mu}).
$$
Since the angle of $(M,g,\mathcal E)$ at infinity is greater than $2\pi$, i.e. $\beta >1$, we can take $r_0$ to be large enough such that $H_{2r_0}>H_{2r_0,prod}$. So $(M,\bar g,\mathcal E)$ has nonnegative scalar curvature in the distribution sense.

Notice that $(M,\bar g,\mathcal E)$ is isometric to the Riemannian product of the standard Euclidean $2$-space and a flat $T^{n-2}$. We can glue the opposite faces of $C\times T^{n-2}$ for a large cube $C\subset \mathbf R^2$ to obtain a piecewisely smooth metric $\tilde g$ on $(T^n,i)\#_{T^{n-2}}(\bar M,i_2)$ with nonnegative scalar curvature in the distribution sense. Combining Miao's smoothing trick in \cite{Miao2002} and Kazdan's deformation in \cite{Kazdan82} we can construct a complete metric on $(T^n,i)\#_{T^{n-2}}(\bar M,i_2)$ with positive scalar curvature. In detail, it follows from \cite[Proposition 3.1]{Miao2002} and \cite[Lemma 4.1]{LM2019} that we can find a smooth perturbed metric such that the first Neumann eigenvalue of the conformal Laplace operator is positive in a large compact region. Now we can use \cite[Theorem A]{Kazdan82} to construct a complete conformal metric on $(T^n,i)\#_{T^{n-2}}(\bar M,i_2)$ with positive scalar curvature.
\end{proof}

In the following, we focus on asymptotically conical manifolds with angle $2\pi$ at infinity. In order to deal with the rigidity case, we need some more analysis.

\begin{lemma}\label{Lem: comparison function}
Given an asymptotically conical manifold $(M,g,\mathcal E)$ with $\beta=1$ and fixed a constant $0<\alpha<1$, there is a universal constant $r^*_0=r^*_0(M,g,\mathcal E,\alpha)$ such that the function 
$
w_1=\log r+\log^{\alpha} r
$ and $w_2=\log r-r^{-\mu/2}$
are superharmonic when $r\geq r^*_0$.
\end{lemma}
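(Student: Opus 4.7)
The plan is to compute the Laplacian of $w_1$ and $w_2$ with respect to the model metric $g_\beta$ (with $\beta=1$) explicitly, verify superharmonicity there, and then show that the difference $\Delta_g - \Delta_{g_\beta}$ applied to $w_i$ is a lower-order correction that cannot spoil superharmonicity for $r$ large.

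First, write $g_1 := g_\beta = \mathrm{d}r^2 + r^2\mathrm{d}\theta^2 + g_F$ on the end $\mathcal E$. Because $w_1,w_2$ depend only on the radial coordinate $r$ on $\mathbf R^2$, their $g_1$-Laplacian reduces to the standard radial Laplacian on $\mathbf R^2$, namely $\Delta_{g_1} f = f''(r) + r^{-1} f'(r)$. A direct computation gives
\begin{equation*}
\Delta_{g_1} w_1 = \alpha(\alpha-1)\, r^{-2}(\log r)^{\alpha-2},
\qquad
\Delta_{g_1} w_2 = -\frac{\mu^2}{4}\, r^{-\mu/2-2},
\end{equation*}
both of which are strictly negative for $r$ large, since $0<\alpha<1$ and $\mu>0$. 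These will be the leading order ``good'' terms.

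Next I would estimate the error $E_i := \Delta_g w_i - \Delta_{g_1} w_i$ using the decay hypothesis \eqref{Eq: decay 2}, which gives $|g-g_1|_{g_1}+r|\nabla_{g_1}(g-g_1)|_{g_1}=O(r^{-\mu})$. In any local frame compatible with the $\mathbf R^2\times F$ splitting, $\Delta_g f = g^{ij}(\partial_i\partial_j f - \Gamma^k_{ij}\partial_k f)$, so
\begin{equation*}
|E_i| \;\lesssim\; |g^{-1}-g_1^{-1}|\,|\partial^2 w_i| \;+\; |\Gamma(g)-\Gamma(g_1)|\,|\partial w_i|
\;=\; O(r^{-\mu})\cdot O(r^{-2}) + O(r^{-\mu-1})\cdot O(r^{-1}) = O(r^{-\mu-2}).
\end{equation*}
Comparing orders: for $w_1$ the ratio of $|E_1|$ to the leading term satisfies $|E_1|/|\Delta_{g_1} w_1| = O(r^{-\mu}(\log r)^{2-\alpha}) \to 0$, and for $w_2$ one has $|E_2|/|\Delta_{g_1} w_2| = O(r^{-\mu/2}) \to 0$. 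Hence $\Delta_g w_i = \Delta_{g_1} w_i + E_i$ stays strictly negative beyond some threshold $r_0^*$ depending on $(M,g,\mathcal E)$ and $\alpha,\mu$.

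The main (and only mild) obstacle is producing the Christoffel-symbol estimate $|\Gamma(g)-\Gamma(g_1)|_{g_1}=O(r^{-\mu-1})$ cleanly from \eqref{Eq: decay 2}; this is a standard consequence of $|\nabla_{g_1}(g-g_1)|_{g_1}=O(r^{-\mu-1})$ together with the uniform equivalence of $g$ and $g_1$ near infinity. Once this is in hand, the argument is a routine comparison of powers and logarithms, and the threshold $r_0^*$ is extracted from the inequality $|E_i| < |\Delta_{g_1} w_i|/2$.
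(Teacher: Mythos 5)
Your proposal is correct and follows essentially the same route as the paper: compute the radial Laplacian of $w_1$ and $w_2$ for the model metric to get the negative leading terms $\alpha(\alpha-1)r^{-2}\log^{\alpha-2}r$ and $-\tfrac{\mu^2}{4}r^{-2-\mu/2}$, then use the decay \eqref{Eq: decay 2} to show the correction from $\Delta_g-\Delta_{g_\beta}$ is $O(r^{-2-\mu})$ and hence dominated. The paper phrases the error absorption by writing $\Delta_g w=(1+O(r^{-\mu}))\partial_r^2 w+(r^{-1}+O(r^{-1-\mu}))\partial_r w$ rather than via Christoffel symbols, but this is only a cosmetic difference.
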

\begin{proof}
This comes from a direct calculation. From the decay condition \eqref{Eq: decay 2} we see
\begin{equation*}
\begin{split}
\Delta_g w_1&=\left(1+O(r^{-\mu})\right)\frac{\partial^2w_1}{\partial r^2}+\left(r^{-1}+O(r^{-1-\mu})\right)\frac{\partial w_1}{\partial r}\\
&=\alpha(\alpha-1)r^{-2}\log^{\alpha-2}r+O(r^{-2-\mu}).
\end{split}
\end{equation*}
and
$$
\Delta_g w_2=-\frac{\mu^2}{4}r^{-2-\frac{\mu}{2}}+O(r^{-2-\mu}).
$$
Clearly there is a universal constant $r^*_0$ such that $\Delta_g w_1<0$ and $\Delta_g w_2<0$ when $r\geq r^*_0$. This completes the proof.
\end{proof}

Let $\tilde g$ be a smooth metric on $M$ satisfying the asymptotical behavior \eqref{Eq: decay 2}. In the following, we consider the conformal Laplace operator $\tilde\Delta_{conf}$ with respect to the metric $\tilde g$ given by
$$
\tilde\Delta_{conf}=\Delta_{\tilde g}-\frac{n-2}{4(n-1)}R(\tilde g).
$$
For any compact region $U$, we use $\mu_1(\tilde\Delta_{conf},U)$ to denote the first Neumann eigenvalue of $\tilde\Delta_{conf}$ in $U$.

For convenience, we also extend the radical function $r$ from the end $\mathcal E$ (diffeomorphic to $(\mathbf R^2-B)\times F$) to the whole $M$ by defining it to be one outside $\mathcal E$. For any $\rho>1$ we use $B_\rho$ to denote the region $\{r<\rho\}$.

The following lemma provides nice comformal factors for later use.

\begin{lemma}\label{Lem: conformal factor}
Assume
\begin{itemize}
\item $\mu_1(\tilde\Delta_{conf},U)>0$ for some compact region $U$;
\item $R(\tilde g)\geq 0$ outside $U$.
\end{itemize}
Then we can find a sequence of smooth positive functions $u_k$ such that
\begin{itemize}
\item $-\tilde\Delta_{conf}u_k$ is quasi-positive\footnote{A function is said to be quasi-positive if it is nonnegative everywhere and positive somewhere.} in $B_{\rho_k}$ and $u_k=1$ on $\partial B_{\rho_k}$, where $\rho_k\to+\infty$ as $k\to+\infty$;
\item $u_k$ is bounded below by a positive constant;
\item the normal derivative
$$
\frac{\partial u_k}{\partial r}\geq \frac{\delta}{\rho_k\log\rho_k}
$$
for some positive universal constant independent of $k$.
\end{itemize}
\end{lemma}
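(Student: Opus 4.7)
The plan is to take $u_k$ to be the unique smooth solution of the Dirichlet problem
\[
-\tilde\Delta_{conf}u_k=\phi\quad\text{in }B_{\rho_k},\qquad u_k\bigl|_{\partial B_{\rho_k}}=1,
\]
where $\phi\geq 0$ is a fixed smooth compactly supported function with $\phi\not\equiv 0$ and $\mathrm{supp}\,\phi\subset U$. The quasi-positivity of $-\tilde\Delta_{conf}u_k$ in $B_{\rho_k}$ is then immediate, and the two remaining properties will be obtained by comparison arguments.

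First I would establish solvability, positivity, and the uniform lower bound. The assumption $\mu_1(\tilde\Delta_{conf},U)>0$ combined with Kazdan's \cite[Lemma~2.1]{Kazdan82} produces a globally defined positive smooth $\psi$ on $M$ with $-\tilde\Delta_{conf}\psi\geq 0$ everywhere, obtained by taking the positive Neumann eigenfunction on $U$ and extending it by a harmonic function on $M\setminus U$ with asymptotic value $1$ along $\mathcal E$. Using this positive supersolution one gets coercivity of $-\tilde\Delta_{conf}$ on $H^1_0(B_{\rho_k})$, hence unique solvability, and the strong maximum principle forces $u_k>0$. Outside a large compact set $K_0\supset U\supset\mathrm{supp}\,\phi$ where $R(\tilde g)\geq 0$, we have $\Delta_{\tilde g}u_k=\tfrac{n-2}{4(n-1)}R(\tilde g)\,u_k\geq 0$, so $u_k$ is subharmonic there. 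Interior elliptic estimates on $K_0$ (independent of $k$) together with a diagonal subsequence argument yield $u_k\geq c_0>0$ on $\partial K_0$; the subharmonicity then propagates this bound to the asymptotic region, giving a uniform lower bound $u_k\geq\eta:=\min(c_0,1)$.

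The crucial step is the normal derivative estimate, which I would prove by comparison on the annular region $A_k=\{\rho_k^{\gamma}\leq r\leq\rho_k\}\subset\mathcal E$ for some fixed $\gamma\in(0,1)$. Define the radial barrier
\[
\tilde\xi_k(r)=a_k+b_k\bigl(\log r+r^{-\mu/2}\bigr)
\]
with $a_k,b_k>0$ chosen so that $\tilde\xi_k(\rho_k)=1$ and $\tilde\xi_k(\rho_k^{\gamma})=\eta/2$, which forces $b_k\sim \tfrac{1-\eta/2}{(1-\gamma)\log\rho_k}$. Since $\log r$ is exactly $g_\beta$-harmonic on $\mathcal E$, the same kind of computation as in Lemma~\ref{Lem: comparison function} gives $\Delta_{\tilde g}(\log r+r^{-\mu/2})\geq \tfrac12(\mu/2)^2 r^{-\mu/2-2}$ for $r$ large, while the decay of the metric forces $|R(\tilde g)|=O(r^{-2-\mu})$. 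Because $b_k r^{-\mu/2-2}$ dominates $r^{-2-\mu}$ throughout $A_k$ (using $\rho_k^{-\gamma\mu/2}\log\rho_k\to 0$), the barrier $\tilde\xi_k$ is a subsolution of $-\tilde\Delta_{conf}$ on $A_k$ for all sufficiently large $k$. Since $\tilde\xi_k\leq u_k$ on $\partial A_k$ by construction, the maximum principle gives $\tilde\xi_k\leq u_k$ on $A_k$, and the boundary equality at $r=\rho_k$ then forces
\[
\frac{\partial u_k}{\partial r}\bigg|_{r=\rho_k}\geq \tilde\xi_k'(\rho_k)\geq \frac{\delta}{\rho_k\log\rho_k}
\]
for some $\delta>0$ independent of $k$.

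The main obstacle will be the simultaneous balancing of the two scales in the barrier argument: the inner radius $\rho_k^{\gamma}$ must be large enough for the Laplacian of $\log r+r^{-\mu/2}$ to dominate the scalar-curvature term, yet small enough that the uniform lower bound $\eta$ is already available at $r=\rho_k^{\gamma}$. Both constraints are met for any fixed $\gamma\in(0,1)$ and all sufficiently large $k$, precisely because $b_k\sim 1/\log\rho_k$ decays only logarithmically in $\rho_k$ while $r^{-\mu/2}\leq \rho_k^{-\gamma\mu/2}$ decays polynomially, so the $\log r$ term is the dominant one in the barrier.
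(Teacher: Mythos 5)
Your final comparison step has the inequality reversed, and this is fatal. You establish $\tilde\xi_k\leq u_k$ on $A_k$ with equality on the outer boundary $\{r=\rho_k\}$; but a nonnegative function $u_k-\tilde\xi_k$ that vanishes on $\{r=\rho_k\}$ has \emph{nonpositive} outward normal derivative there (this is exactly the Hopf-lemma configuration), so what you actually obtain is $\partial_r u_k\leq\tilde\xi_k'(\rho_k)$ — an upper bound, not the required lower bound. To bound $\partial_ru_k$ from below one needs an \emph{upper} barrier, i.e.\ a supersolution $\Xi_k\geq u_k$ with $\Xi_k=1$ on $\{r=\rho_k\}$; a barrier of the form $a_k+b_k\log r$ with $b_k\sim 1/\log\rho_k$ lies above $u_k$ on the inner boundary of the annulus only if $u_k$ is already known to be small (well below $1$) there.

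That smallness is exactly what your construction cannot produce, which is the deeper structural problem: the hypothesis $\mu_1(\tilde\Delta_{conf},U)>0$ enters your argument only through solvability, whereas it must be the source of the positivity of $\partial_ru_k$. Integrating your equation gives $\int_{\partial B_{\rho_k}}\partial_{\vec n}u_k\,\mathrm d\sigma=\int_{B_{\rho_k}}\bigl(cR(\tilde g)u_k-\phi\bigr)\mathrm d\mu$ with $c=\tfrac{n-2}{4(n-1)}$, and nothing in your setup makes the right-hand side positive; indeed, wherever $\phi\geq cR(\tilde g)u_k$ the solution is superharmonic, and if this holds throughout $B_{\rho_k}$ then $u_k\geq 1$ everywhere and $\partial_ru_k\leq 0$ on the boundary, contradicting the conclusion. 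The paper avoids this by coupling the source to the solution, solving $-\tilde\Delta_{conf}u_k=\tfrac12\eta c_*u_k$ with $\eta$ a cutoff adapted to the eigenvalue bound $c_*$; the heart of its proof is a dichotomy showing, via the boundary integral identities $\int_{\partial\mathcal V_1}u_k\,\partial_{\vec n}u_k\leq 0$ and Moser's theorem on bounded solutions, that the local limit $u_\infty$ must vanish identically. Only then is $u_k$ small on compact sets for large $k$, and the superharmonic upper barrier $\tfrac13+\tfrac23 w_2/\log\rho_k$ built from Lemma \ref{Lem: comparison function} delivers the derivative estimate. Note also that your claimed \emph{uniform} lower bound $u_k\geq\eta$ is inconsistent with this picture (the paper obtains positive lower bounds only after adding $k$-dependent constants $\epsilon_k$, precisely because $u_k\to 0$ locally), and that subharmonicity outside $K_0$ gives a maximum principle, not the minimum principle you invoke to "propagate" the lower bound outward.
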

\begin{proof}
Denote $\tilde U$ to be a larger compact region containing $U$. Since the first Neumann eigenvalue $\mu_1(\tilde\Delta_{conf}, U)$ is positive, there is a positive constant $c_*$ such that $\mu_1(\tilde\Delta_{conf},\tilde U)\geq c_*$. Namely, it holds
$$
\int_{\tilde U}|\tilde\nabla_{conf}\phi|^2\,\mathrm d\mu_{\tilde g}\geq c_*\int_{\tilde U}\phi^2\,\mathrm d\mu_{\tilde g}
$$
for all $\phi\in C^\infty(\tilde U)$. Here and in the sequel, let us formally write
$$
|\tilde\nabla_{conf}\phi|^2:=|\nabla_{\tilde g}\phi|^2+\frac{n-2}{4(n-1)}R(\tilde g)\phi^2
$$
for short. For later use, we fix a smooth cut-off function $\eta:M\to [0,1]$ with compact support such that $\eta\equiv 1$ in $U$ and $\eta\equiv 0$ outside $\tilde U$.

Take a sequence $\{\rho_k\}$ with $\rho_k\to+\infty$ as $k\to+\infty$ and let us construct the desired function $u_k$ from the exhaustion method. Let
$$
\mathcal E\subset \mathcal V_1\subset\mathcal V_2\subset\cdots
$$
be an exhaustion of $M$ such that the closure of $\mathcal V_j-\mathcal E$ is compact. Without loss of generality, we can assume that the region $U$ is contained in $\mathcal V_1$ and $B_{\rho_1}$. First we point out that the homogenous equation
$$
-\tilde\Delta_{conf}u=\frac{1}{2}\eta c_* u\quad\text{in}\quad \mathcal V_j\cap B_{\rho_k}
$$
with mixed boundary condition
$$
\frac{\partial u}{\partial\vec n_-}=0\quad \text{on}\quad \partial \mathcal V_j,\quad u=0\quad\text{on}\quad \partial B_{\rho_k}
$$
only has zero solution, where $\vec n_-$ is denoted to be the unit normal vector of $\partial\mathcal V_k$ in $M$ opposite to the end $\mathcal E$. To see this, we just need to integrate by parts and see
\[
\begin{split}
0&=\int_{\mathcal V_j\cap B_{\rho_k}}|\tilde\nabla_{conf}u|^2-\frac{1}{2}\eta c_* u^2\,\mathrm d\mu_{\tilde g}\geq \frac{1}{2}c_*\int_{\tilde U}u^2\,\mathrm d\mu_{\tilde g}.
\end{split}
\]
This yields $u\equiv 0$ in $\tilde U$. Since $u$ is a harmonic function outside $\tilde U$, it follows from the maximum principle that $u$ vanishes in the whole $\mathcal V_k\cap B_{\rho_k}$. From the Fredholm alternative, we see that the equation
$$
-\tilde\Delta_{conf}u_{kj}=\frac{1}{2}\eta c_* u_{kj}\quad\text{in}\quad \mathcal V_j\cap B_{\rho_k}
$$
with mixed boundary condition
$$
\frac{\partial u_{kj}}{\partial\vec n_-}=0\quad \text{on}\quad \partial \mathcal V_j,\quad u_{kj}=1\quad\text{on}\quad \partial B_{\rho_k}
$$
is solvable. 

Next let us show that up to a subsequence the functions $u_{kj}$ converge to a smooth positive function $u_k$ satisfying the equation
$$
-\tilde\Delta_{conf}u_{k}=\frac{1}{2}\eta c_* u_{k}\quad\text{in}\quad  B_{\rho_k}
$$
with $u_k=1$ on $\partial B_{\rho_k}$. Denote $v_{kj}=u_{kj}-1$. After integrating by parts and using the fact $\mu_1(\tilde\Delta_{conf},\tilde U)\geq c_*$ as well as $R(\tilde g)\geq 0$ outside $\tilde U$, we see that
\[
\begin{split}
\frac{1}{2}c_*\int_{\tilde U}v_{kj}^2\,\mathrm d\mu_{\tilde g}&\leq\int_{\mathcal V_j\cap B_{\rho_k}}|\tilde\nabla_{conf}v_{kj}|^2-\frac{1}{2}\eta c_*v_{kj}^2\,\mathrm d\mu_{\tilde g}\\
&=\frac{1}{2}c_*\int_{\mathcal V_j\cap B_{\rho_k}}\eta v_{kj}\,\mathrm d\mu_{\tilde g}.
\end{split}
\]
Combined with the H\"older inequality, we conclude
$$
\left(\int_{\tilde U}v_{kj}^2\,\mathrm d\mu_{\tilde g}\right)^{\frac{1}{2}}\leq \vol_{\tilde g}(\tilde U). 
$$
From the standard elliptic theory, the function $v_{kj}$ as well as $u_{kj}$ has a uniform $C^0$-norm in $U$ independent of $k$ and $j$. Since all functions $u_{kj}$ are positive, the Harnack inequality combined with the maximum principle yields that for any compact subset $K$ of $M$ there is a universal constant $C$ depending on $K$ such that $\|u_{kj}\|_{C^0(K)}\leq C$ for all large enough $j$. From the standard elliptic theory the functions $u_{kj}$ converge to a smooth positive function $u_k$ up to a subsequence and $u_k$ also satisfies the estimate 
\begin{equation}\label{Eq: C0 norm uk}
\|u_k\|_{C^0(K)}\leq C.
\end{equation}

At this stage, let us point out some extra useful estimates for the functions $u_k$. From a straightforward calculation we see
$$
\int_{\partial \mathcal V_1}\frac{\partial u_{kj}}{\partial \vec n_-}\,\mathrm d\sigma_{\tilde g}=\int_{\partial \mathcal V_j}\frac{\partial u_{kj}}{\partial \vec n_-}\,\mathrm d\sigma_{\tilde g}=0
$$
and
$$
\int_{\partial \mathcal V_1}u_{kj}\frac{\partial u_{kj}}{\partial \vec n_-}\,\mathrm d\sigma_{\tilde g}=\int_{\partial \mathcal V_j}u_{kj}\frac{\partial u_{kj}}{\partial \vec n_-}\,\mathrm d\sigma_{\tilde g}-\int_{\mathcal V_j-\mathcal V_1}|\nabla_{\tilde g} v_{kj}|^2\,\mathrm d\mu_{\tilde g}\leq 0
$$ 
As a result, the function $u_k$ satisfies
\begin{equation}\label{Eq: uk estimates}
\int_{\partial \mathcal V_1}\frac{\partial u_k}{\partial \vec n_-}\,\mathrm d\sigma_{\tilde g}=0\quad\text{and}\quad \int_{\partial \mathcal V_1}u_k\frac{\partial u_k}{\partial \vec n_-}\,\mathrm d\sigma_{\tilde g}\leq 0.
\end{equation}

In the following, we devote to show the lower bound estimate for normal derivative $\partial_ru_k$ on $\partial B_{\rho_k}$. From previous discussion the functions $u_k$ has a uniform $C^0$-norm in each compact subsets and so $u_k$ converges to a nonnegative function $u_\infty$ up to a subsequence. From the Harnack inequality there are two possibilities:

{\it Case 1. $u_\infty\equiv 0$.} In this case, we can obtain the desired estimate from a comparison argument. Let $r_0^*$ and $w_2$ be the same as in Lemma \ref{Lem: comparison function}. Notice that the value of $u_k$ on $\{r=r_0^*\}$ will be less than $1/3$ when $k$ is large enough. So we have
$$
u_{k}\leq \frac{1}{3}+\frac{2}{3}\frac{w_2}{\log \rho_k}\quad \text{on}\quad\partial B_{r_0^*}
$$
and
$$u_k=\frac{1}{3}+\frac{2}{3}\frac{w_2}{\log \rho_k}=1\quad \text{on}\quad \partial B_{\rho_k}.$$
Since $w_2$ is super-harmonic when $r_0^*\leq r\leq \rho_k$, we conclude
$$
u_k\leq\frac{1}{3}+\frac{2}{3}\frac{w_2}{\log \rho_k}\quad\text{when}\quad r_0^*\leq r\leq \rho_k,
$$
and
$$\frac{\partial u_k}{\partial r}\geq \left(\frac{2}{3}+o(1)\right)\frac{1}{\rho_k\log\rho_k}\geq \frac{1}{2\rho_k\log\rho_k} \quad \text{on}\quad \partial B_{\rho_k}
$$
for $k$ large enough.

{\it Case 2. $u_\infty$ is a positive function.} From \eqref{Eq: uk estimates} we have
$$
\int_{\partial \mathcal V_1}u_\infty\frac{\partial u_\infty}{\partial \vec n_-}\,\mathrm d\sigma_{\tilde g}\leq 0
$$
and so for any $\rho>1$ we have
\[
\begin{split}
\int_{\partial B_\rho}u_\infty\frac{\partial u_\infty}{\partial \vec n_+}\,\mathrm d\sigma_{\tilde g}&=\int_{\mathcal V_1\cap B_\rho}|\tilde\nabla_{conf}u_\infty|^2-\frac{1}{2}\eta c_*u_\infty^2\,\mathrm d\mu_{\tilde g}-\int_{\partial\mathcal V_1}u_\infty\frac{\partial u_\infty}{\partial \vec n_-}\,\mathrm d\sigma_{\tilde g}\\
&\geq \frac{1}{2}c_*\int_{\tilde U}u_\infty^2\,\mathrm d\mu_{\tilde g}>0,
\end{split}
\]
where $\vec n_+$ is denoted to be the unit normal of $\partial B_\rho$ in $M$ pointing to the end $\mathcal E$. From the maximum principle as well as \eqref{Eq: C0 norm uk} we know
$$
u_k\leq\max\left\{\max_{\partial B_1}u_k,1\right\} \leq C
$$
for some universal constant $C$ independent of $k$ and the same estimate holds for $u_\infty$. It then follows from \cite[Theorem 5]{Moser1961} that $u$ has a finite limit as $r\to+\infty$, which then yields
$$
u_\infty=O(1)\quad\text{and}\quad |\nabla_{\tilde g}u_\infty|=o(r^{-1}),\quad \text{as}\quad r\to+\infty.
$$
This implies
$$
\int_{\partial B_\rho}u_\infty\frac{\partial u_\infty}{\partial \vec n_+}\,\mathrm d\sigma_{\tilde g}=o(1),\quad\text{as}\quad \rho\to+\infty,
$$
which is impossible.

Finally, we do some modification for $u_k$ such that it is bounded below by a positive constant. Notice that $-\tilde\Delta_{conf}$ is quasi-positive in $B_{\rho_k}$ and bounded below by a positive constant in $U$. Since the scalar curvature $R(\tilde g)$ can only be negative in $U$, we can replaced $u_k$ by
$$
\frac{u_k+\epsilon_k}{1+\epsilon_k}
$$
with some small positive constant $\epsilon_k$ such that all desired properties are satisfied.
\end{proof}

Now we are ready to prove Theorem \ref{Thm: main 5}.
\begin{proof}[Proof for Theorem \ref{Thm: main 5}]
First we show that the angle of $(M,g,\mathcal E)$ at infinity is no greater than $2\pi$. Otherwise, Proposition \ref{Prop: PMT to PSC 2} yields that the generalized connected sum $(T^n,i)\#_{T^{n-2}}(\bar M,i_2)$ admits a complete metric with positive scalar curvature. In the following, we will investigate the topological restrictions of $(T^n,i)\#_{T^{n-2}}(\bar M,i_2)$ from our assumption on $M$. Recall that the gluing procedure is illustrated by the following Figure \ref{Fig: 7}.
\begin{figure}[htbp]
\centering
\includegraphics[width=12cm]{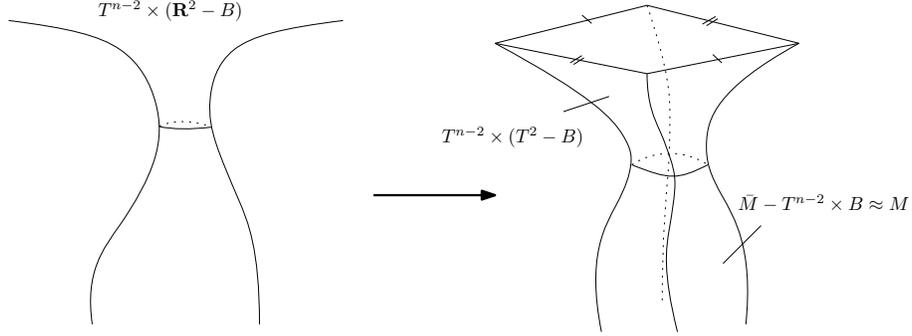}
\caption{The gluing procedure}
\label{Fig: 7}
\end{figure}

With the help of the coordinate system on the end $\mathcal E$, the manifold $T^{n-2}\times (T^2-B)$ is foliated by $T^{n-2}$-slices and this induces a canonical perturbation $s:T^{n-2}\to \partial(T^{n-2}\times B)$ of the embedding $i:T^{n-2}\to T^n$, which maps $T^{n-2}$ to some $T^{n-2}$-slice in $\partial(T^{n-2}\times B)$. Denote 
$$\Phi:\partial\left(T^{n-2}\times(T^2-B)\right)\to \partial\left(\bar M-T^{n-2}\times B\right) $$
to be the gluing map. From the assumption on $M$, we see that
\begin{itemize}
\item the map $\Phi\circ s:T^{n-2}\to \bar M-T^{n-2}\times B$ is incompressible for $n\geq 3$;
\item or $n=3$ and the map $\Phi\circ s:\mathbf S^1\to \bar M-\mathbf S^1\times B$ is homotopically non-trivial.
\end{itemize}
With the exactly same argument in the proof of Proposition \ref{Prop: main 2} and \ref{Prop: main 3}, this condition\footnote{This is enough since in the proof of Proposition \ref{Prop: main 2} and Proposition \ref{Prop: main 3} the contradiction is deduced by showing non-contractibility of certain curves in $M_2-U_2$.} can be used to rule out the existence of complete metrics with positive scalar curvature on
$(T^n,i)\#_{T^{n-2}}(\bar M,i_2)$, but this contradicts to the implication from Proposition \ref{Prop: PMT to PSC 2}.

Next let us show the flatness of $(M,g)$ under the assumption $\beta=1$. The proof will be divided into three steps.

{\it Step 1. $(M,g)$ is scalar flat.}
Otherwise, the scalar curvature $R(g_\beta)$ must be positive at some point $p$ in $M$. Let $\tilde g=g_\beta$ and $U$ be a small geodesic centered at the point $p$. Then it is easy to verify that $\mu_1(\tilde\Delta_{conf},U)>0$ and that $R(\tilde g)$ is nonnegative outside $U$. From Lemma \ref{Lem: conformal factor} we can find a sequence of positive smooth solutions $u_k$ such that 
\begin{itemize}
\item $-\tilde\Delta_{conf}u_k$ is quasi-positive in $B_{\rho_k}$ and $u_k=1$ on $\partial B_{\rho_k}$, where $\rho_k\to+\infty$ as $k\to+\infty$;
\item $u_k$ is bounded below by a positive constant;
\item the normal derivative
$$
\frac{\partial u_k}{\partial r}\geq \frac{\delta}{\rho_k\log\rho_k}
$$
for some positive universal constant independent of $k$.
\end{itemize}
Now we would like to follow along the proof of Proposition \ref{Prop: PMT to PSC 2} with some modifications. In the definition of the piecewisely smooth metric $\bar g$, we add a conformal deformation. Namely, we let
\begin{equation*}
\bar g=\left\{
\begin{array}{cc}
\bar g_\beta:=u_k^{\frac{4}{n-2}} \tilde g,&r\leq \rho_k;\\
u^2\mathrm dr^2+\bar g_r,&\rho_k\leq r\leq 2\rho_k;\\
\beta^2g_{euc}\oplus g_{flat}, &r\geq 2\rho_k.
\end{array}
\right.
\end{equation*}
We point out that the metric $\bar g$ is complete due to the positive lower bound for $u_k$. In the following, the analysis is similar as before. The mean curvature of $S_{\rho_k}$ with respect to $\bar g_\beta$ satisfies
$$
H_{\rho_k,\bar g_\beta}\geq\frac{1}{\rho_k}+\frac{\delta'}{\rho_k\log\rho_k}
$$
for some positive constant $\delta'$ independent of $\rho_k$. As a result, the initial value of $u$ can be chosen to satisfy 
$$u(\rho_k,\cdot)\leq 1-\delta'\log^{-1}\rho_k+O(\rho_k^{-\epsilon}).$$ Doing the same analysis as in the proof of Proposition \ref{Prop: PMT to PSC 2}, we have 
$$u(2\rho_k,\cdot)\leq 1-\delta'\log^{-1}\rho_k +O(\rho_k^{-\epsilon}).$$ This yields
$$
H_{2\rho_k}=\frac{1}{2\rho_k u}\geq \frac{1}{2\rho_k}+\frac{\delta'}{2\rho_k\log\rho_k}+O(\rho_k^{-\epsilon}).
$$
On the other hand, we have
$$
H_{2\rho_k,prod}=\frac{1}{2\rho_k}.
$$
By taking $\rho_k$ to be large enough, we still have the strict inequality $H_{2\rho_k}>H_{2\rho_k,prod}$ and this further implies the existence of a complete metric on the generalized connected sum $(T^n,i)\#_{T^{n-2}}(\bar M,i_2)$ with positive scalar curvature, which is impossible from the previous discussion. 

{\it Step 2. $(M,g)$ is Ricci flat.}
This comes from a standard deformation argument. Assume otherwise that the Ricci curvature of the metric $g$ does not vanish everywhere. From \cite[Lemma 3.3]{Kazdan82} we can pertube the metric $g$ inside a small geodesic ball $U$ such that the purtubed metric (denoted by $\tilde g$) satisfies
$
\mu_1(\tilde\Delta_{conf},U)>0.
$
Since the metric $g$ is unchanged outside $U$, we see $R(\tilde g)\geq 0$ outside $U$. With the same argument as in Step 1, we can deduce the same contradiction. As a result, $(M,g)$ has to be Ricci flat.

{\it Step 3. $(M,g)$ is flat.}
In dimension three, flatness is equivalent to Ricci-flatness and we are already done. So we just need to deal with the case when the dimension of $M$ is no less than four. We try to use the volume comparison theorem for Ricci nonnegative Riemannian manifolds. Let us denote $\tilde {\mathcal E}$ to be the universal covering of $\mathcal E$. Clearly, $\tilde {\mathcal E}$ is diffeomorphic to $[1,+\infty)\times\mathbf R\times \mathbf R^{n-2}$ and the covering map $\pi:\tilde{\mathcal E}\to \mathcal E\approx (\mathbf R^2-B)\times T^{n-2}$ is given by
$$
(r,\theta,x)\mapsto (r,\theta,\bar x)
$$
with the help of the polar coordinate system at infinity, where $x\mapsto \bar x$ is the quotient map from $\mathbf R^{n-2}$ to $T^{n-2}$. Denote $\pi:(\tilde M,\tilde g)\to (M,g)$ to be the universal covering of $(M,g)$ and $e:\mathcal E\to M$ to be the canonical embedding of the end $\mathcal E$. We can lift $e$ to a map $\tilde e:\tilde{\mathcal E}\to \tilde M$ such that we have the following commutative diagram
\begin{equation}\label{Eq: lift diagram}
\xymatrix{\tilde{\mathcal E}\ar[r]^{\tilde e}\ar[d]^{\pi}&\tilde M\ar[d]^{\pi}\\
\mathcal E\ar[r]^{e}&M.}
\end{equation}

Denote
$$
\Omega=\{(r,\theta,x):r\geq 1,\,0<\theta<2\pi\}\subset \tilde{\mathcal E}.
$$
Let us show that $\tilde e|_{\Omega}:\Omega\to \tilde M$ is an embedding. The only thing needs to prove is that the map $\tilde e|_{\Omega}$ is injective. To see this, we pick up two different points $p$ and $q$ in $\tilde{\mathcal E}$ and try to show $\tilde e(p)\neq \tilde e(q)$. Otherwise, a path $\gamma:[0,1]\to \Omega$ with $\gamma(0)=p$ and $\gamma(1)=q$ maps to a closed curve $\pi(\gamma)$ in $\mathcal E$, whose image $(e\circ \pi)(\gamma)$ in $M$ is homotopically trivial due to the commutative diagram \ref{Eq: lift diagram}. From the definition of $\Omega$, the closed curve $(e\circ\pi)(\gamma)$ lies in $(i_{T^{n-2}})_*(\pi_1(T^{n-2}))$. From the assumption we know that
$$
(i_{T^{n-2}})_*:\pi_1(T^{n-2})\to \pi_1(M)
$$
is injective and so $\pi(\gamma)$ can be homotopic to a point in $\mathcal E$. This is impossible since $\mathcal E$ is the universal covering and the path $\gamma$ has different end-points.

Fix a point $\tilde p$ in $\tilde M$. From the comparison theorem the flatness of $(M,g)$ will follow from the estimate
$$
\liminf_{\rho\to+\infty}\frac{\vol_{\tilde g}(B_{\tilde g}(\tilde p,\rho))}{\omega_n\rho^n}\geq 1.
$$
For convenience, let us denote $\Omega_{r_0}=\Omega\cap \{r\geq r_0\}$ for any $r_0>1$. It is clear that for any $\epsilon>0$ there is a $r_0>1$ such that
$$
(1-\epsilon)\tilde g_{euc}\leq \tilde g\leq (1+\epsilon)\tilde g_{euc}\quad\text{on}\quad \Omega_{r_0},
$$
where $\tilde g_{euc}$ is the metric on $\Omega$ given by $\tilde g_{euc}=\mathrm dr^2+r^2\mathrm d\theta^2+\mathrm dx^2$. The figure \ref{Fig: 9} below illustrates how $\Omega_{r_0}$ looks like $\mathbf R^n$ with certain parts removed.
\begin{figure}[htbp]
\centering
\includegraphics[width=9cm]{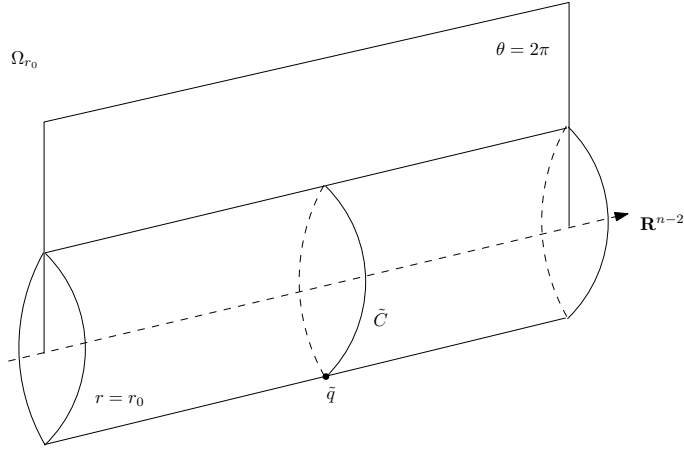}
\caption{The region $\Omega_{r_0}$}
\label{Fig: 9}
\end{figure}

In the following, we will calculate the volume of the intersection $B_{\tilde g}(\tilde p,\rho)\cap\Omega$. For convenience, we denote $\tilde q=(r_0,\pi,0)$ and $\tilde C=\{(r_0,\theta,0):0<\theta<2\pi\}$. Clearly we have $\diam_{\tilde g}\tilde C\leq (1+\epsilon)2\pi r_0$. We also let $\tilde r=\sqrt{r^2+|x|^2}$ be the radical distance function on $\Omega_{r_0}$. For any point $\tilde q'=(r,\theta,x)$ with $\tilde r(\tilde q')=\rho$, we can find the path
$$
\tilde \gamma(t)=((1-t)r_0+tr,\theta,tx),\quad t\in[0,1],
$$
connecting $\tilde C$ and $\tilde q'$. A direct computation shows that the $\tilde g$-length of the curve $\gamma$ does not exceed $(1+\epsilon)\rho$. Denote $\Lambda=\dist(\tilde p,\tilde q)+(1+\epsilon)2\pi r_0$. Now we see
\[
\begin{split}
\vol_{\tilde g}(B_{\tilde g}(\tilde p,\rho+\Lambda))&\geq\vol_{\tilde g}(B_{\tilde g}(\tilde C,\rho)\subset \Omega)\\
&\geq (1-\epsilon)^n\vol_{\tilde g_{euc}}\left(\{\tilde r\leq (1+\epsilon)^{-1}\rho\}\subset \Omega\right)\\
&=\left(\frac{1-\epsilon}{1+\epsilon}\right)^n\omega_n\rho^n+O(\rho^{n-2}),\quad \text{as}\quad \rho\to+\infty.
\end{split}
\]
This implies $$
\liminf_{\rho\to+\infty}\frac{\vol_{\tilde g}(B_{\tilde g}(\tilde p,\rho))}{\omega_n\rho^n}\geq \left(\frac{1-\epsilon}{1+\epsilon}\right)^n,
$$
and the proof is completed by letting $\epsilon\to 0$.
\end{proof}

\section{Proof for Proposition \ref{Prop: main 7}}
This section is devoted to a detailed proof for Proposition \ref{Prop: main 7}. Instead of a direct proof, we are going to show the following stronger result first.

\begin{proposition}\label{Prop: main 6}
Let $(\Sigma,g_\Sigma)$, $\dim\Sigma \geq 2$, be a convex hypersurface or closed curve in Euclidean space and $(F,g_F)$ be a circle $\mathbf S^1$ or a flat $k$-torus $(T^k,g_{flat})$. If $(\Omega,g)$, $\dim \Omega \leq 7$, is a compact manifold with $R(g)\geq 0$ and mean convex boundary such that
\begin{itemize}
\item $\partial\Omega$ with the induced metric is isometric to the Riemannian product $\Sigma\times F$ through a diffeomorphism $\phi=(\phi_1,\phi_2): \partial \Omega\to\Sigma\times F$;
\item the induced map $(\phi_{2}^{-1})_*:\pi_1( F) \to \pi_1(\Omega)$ is injective when $F=T^k$
or $(\phi_{2}^{-1})_*:\pi_1(F) \to \pi_1(\Omega)$ is non-zero when $F=\mathbf S^1$,
\end{itemize}
then 
\begin{equation}\label{Eq: total curvature estimates}
\int_{\partial\Omega} H\,\mathrm d\sigma_g\leq \Lambda(\Sigma,g_{\Sigma})\cdot \vol(F,g_F),
\end{equation}
where $H$ is the mean curvature of $\partial\Omega$ in $\Omega$ with respect to the unit outer normal and $\Lambda(\Sigma,g_\Sigma)$ is denoted to be the total mean curvature of $\Sigma$ in the Euclidean space. Moreover, if we have the equality in \eqref{Eq: total curvature estimates}, then the metric $g$ is flat.
\end{proposition}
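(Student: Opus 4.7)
The plan is to extend $(\Omega,g)$ by an explicit $R=0$ exterior of quasi-spherical type, glue along the matching corner, and then invoke the positive mass theorems established earlier. When $\dim\Sigma\geq 2$ the glued manifold will be asymptotically flat with fiber $F$ in the sense of Definition \ref{Defn: AF with fiber F}, so we apply Theorem \ref{Thm: main 4}; when $\dim\Sigma=1$ it will be asymptotically conical with fiber $F$, so we apply Theorem \ref{Thm: main 5}. The inequality \eqref{Eq: total curvature estimates} then pops out of a Shi--Tam type monotonicity formula relating the boundary contribution to the total mass (respectively the angle at infinity).

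Concretely, let $K\subset\mathbf R^d$ be the convex region bounded by $\Sigma$, and foliate $\overline{\mathbf R^d\setminus K}$ by smooth strictly convex hypersurfaces $\{\Sigma_r\}_{r\geq 0}$ (for instance outward equidistant surfaces), identifying the exterior with $[0,\infty)\times\Sigma$. Writing $g_r^\Sigma$ for the induced metric on $\Sigma_r$ and $H_r^0$ for its Euclidean mean curvature, I look on $E:=[0,\infty)\times\Sigma\times F$ for a quasi-spherical metric $g_E=u^2\,\mathrm dr^2+g_r^\Sigma+g_F$ with $R(g_E)\equiv 0$; following Bartnik \cite{Bartnik1993} and Shi--Tam \cite{ST2002}, this reduces to a first-order parabolic equation on $\Sigma\times F$ for the positive factor $u$. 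Imposing $u(0,\cdot)=H_0/H$ (where $H_0$ is $H_r^0|_{r=0}$ pulled back to $\partial\Omega$ via $\phi_1$) matches the mean curvatures of the two pieces, so the gluing produces a Lipschitz metric on $M:=\Omega\cup_\phi E$ with distributionally nonnegative scalar curvature in the sense of Miao \cite{Miao2002}; Miao's approximation then yields a genuine smooth complete metric $\tilde g$ with nonnegative scalar curvature and the correct asymptotic structure, up to arbitrarily small perturbations of the total mass (resp.\ angle).

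Since $F$ enters only as a passive factor in the quasi-spherical equation, the analysis in \cite{ST2002} adapts directly -- by averaging over $F$, in the spirit of the argument used in Proposition \ref{Prop: Sobolev} -- to yield global existence of $u$, the expected asymptotic expansion at infinity, and a Shi--Tam type monotonicity formula
\begin{equation*}
c(d,F)\bigl(\Lambda(\Sigma,g_\Sigma)\vol(F,g_F)-\textstyle\int_{\partial\Omega}H\,\mathrm d\sigma_g\bigr)\geq m(M,\tilde g,\mathcal E)
\end{equation*}
when $d\geq 3$, with an analogous inequality featuring the angle deficit $2\pi-2\pi\beta$ on the right-hand side when $d=2$. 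On the topological side, $\Sigma$ is simply connected whenever $\dim\Sigma\geq 2$ (convex closed hypersurfaces in Euclidean space are spheres), so $\pi_1(\mathcal E)=\pi_1(F)$ and the condition on $(\phi_2^{-1})_*$ transfers from $\Omega$ to $M$ via a Van Kampen pushout; when $\dim\Sigma=1$, Theorem \ref{Thm: main 5} only asks for the fiber $F$ to be incompressible, which is precisely the hypothesis. Theorem \ref{Thm: main 4}/\ref{Thm: main 5} therefore forces $m(M,\tilde g,\mathcal E)\geq 0$ (resp.\ $\beta\leq 1$), so the right-hand side above is nonnegative and we obtain \eqref{Eq: total curvature estimates}. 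Equality forces the mass (resp.\ the angle deficit) to vanish, and the rigidity halves of Theorem \ref{Thm: main 4}/\ref{Thm: main 5} then render $(M,\tilde g)$ flat; in particular, the isometrically embedded $(\Omega,g)$ is flat.

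The principal analytic obstacle is to rigorously extend the long-time existence and sharp asymptotic analysis of \cite{ST2002} to the product setting $[0,\infty)\times\Sigma\times F$ in a form compatible with the weighted Sobolev inequalities from Subsection~\ref{Sec: Sobolev} and with Miao's corner smoothing. The topological transfer is not automatic when $\dim\Sigma=1$, and some care is required to verify that the non-triviality of $(\phi_2^{-1})_*$ really produces a non-trivial image of $\pi_1(F)$ in $\pi_1(M)$ rather than being killed by the $\Sigma$-circle coming from the exterior filling.
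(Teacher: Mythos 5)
Your route and the paper's agree on the first move (a scalar--flat quasi-spherical extension of the exterior $\Omega_{ext}\times F$ with initial value $u_0=\bar H_0/H$, so that mean curvatures match along the corner), but they diverge immediately afterwards. You want to verify that the glued manifold is asymptotically flat (resp.\ conical) with fiber $F$ in the sense of Definition \ref{Defn: AF with fiber F} and then quote Theorem \ref{Thm: main 4} or \ref{Thm: main 5}; the paper deliberately avoids this. It only proves the soft statement that $v=t^{m-2}(u-1)$ converges to a constant $\beta_0$ (via the parabolic scaling $\Phi_l$ and a maximum-principle argument), observes that $\int_{\partial\Omega}H>\Lambda(\Sigma,g_\Sigma)\vol(F,g_F)$ together with the Shi--Tam monotonicity forces $\beta_0<0$ (resp.\ $\beta_0<1$ when $\dim\Sigma=1$), hence $H_{qs}>H_{prod}$ on a far slice, and then truncates there, glues in the exact flat product and compactifies into a torus exactly as in Proposition \ref{Prop: PMT to PSC 2}, contradicting Propositions \ref{Prop: main 2} and \ref{Prop: main 3}. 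This sidesteps precisely what you flag as the ``principal analytic obstacle'': for your route you must establish the full $C^2$ decay $|g_{qs}-g_0|+r|\partial(g_{qs}-g_0)|+r^2|\partial^2(g_{qs}-g_0)|=O(r^{-\mu})$ with $\mu>\frac{d-2}{2}$ (and, when $\dim\Sigma=1$, the rate of convergence $|u-\beta_0|=O(t^{-\mu})$ to the cone, which in the paper requires the Harnack/oscillation-decay argument that it only carries out inside the rigidity discussion), plus a Miao-type smoothing that preserves both the asymptotics and the mass up to $\epsilon$. Your topological transfer via Van Kampen is fine in both cases ($\pi_1(M)\cong\pi_1(\Omega)$ since $\partial\Omega\hookrightarrow E$ is a $\pi_1$-isomorphism), so the inequality part of your argument is a workable, if analytically heavier, alternative.

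The genuine gap is the equality case. You propose to conclude that the mass (resp.\ angle deficit) vanishes and then invoke the rigidity halves of Theorems \ref{Thm: main 4}/\ref{Thm: main 5}; but those rigidity statements apply to \emph{smooth} complete metrics, and your glued metric has a corner along $\partial\Omega$. Miao's approximation only gives a sequence of smooth metrics with masses tending to $0$, and rigidity does not pass to such a limit without substantial further work --- this is exactly where the paper spends most of its effort. Its actual argument first uses constancy of the monotone quantity to force $u\equiv 1$, then a first-Neumann-eigenvalue variation of the conformal Laplacian (in the spirit of \cite[Lemma 2.1]{BC2019}) to show that the second fundamental forms agree from both sides of the corner and that the smooth part is Ricci flat, and finally runs a Ricci--DeTurk $h$-flow from the $C^{1,1}$ glued metric, checking that nonnegative scalar curvature and the asymptotic structure survive, so that the smooth rigidity theorem can be applied to the flowed metrics. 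Without some version of this corner-rigidity analysis, your proof establishes \eqref{Eq: total curvature estimates} but not the flatness conclusion in the equality case.
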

\begin{proof}
The proof is based on the quasi-spherical metric. In \cite{ST2002}, it was shown that the quasi-spherical metric over the exterior region of Euclidean space is always asymptotically flat at infinity such that the positive mass theorem comes into play. Here we take a different way combining the quasi-spherical metric with the compactification trick such that Proposition \ref{Prop: main 2} and \ref{Prop: main 3} can be applied. 
%First notice that we can assume $F$ to be a flat $k$-torus $(T^k,g_{flat})$ from a lifting argument.
Let us divide our discussion into two cases.

{\it Case 1. $\dim \Sigma\geq 2$.} Denote $\Omega_{ext}$ to be the exterior region outside of $\Sigma$ in the Euclidean space. Since $\Sigma$ is convex, we can find an equidistant foliation
$$
\Psi:\Sigma\times [0,+\infty)\to \Omega_{ext}
$$
and the Euclidean metric on $\Omega_{ext}$ can be written as
$
g_{euc}=\mathrm dt^2+g_t
$, 
where $g_t$ is the induced metric of the hypersurface $\Sigma_t=\Psi(\Sigma\times\{t\})$. After taking the $T^k$-component into consideration, let us denote $\bar\Omega_{ext}=\Omega_{ext}\times T^k$ and $\bar\Sigma_t=\Sigma_t\times T^k$. We also write $\bar g_t=g_t+g_{flat}$ and $\bar g=\mathrm dt^2+\bar g_t$. As before, we consider the following quasi-spherical metric equation
\begin{equation}\label{Eq: QS}
\bar H_t\frac{\partial u}{\partial t}=u^2\Delta_{\bar g_t}u+\frac{1}{2}R(\bar g_t)(u-u^3),\quad u(0,\cdot)=u_0>0,
\end{equation}
which guarantees the quasi-spherical metric $\bar g_{qs}=u^2\mathrm dt^2+\bar g_t$ has vanishing scalar curvature. Here $\bar H_t$ and $R(\bar g_t)$ are denoted to be the mean curvature and the scalar curvature of the hypersurface $\bar\Sigma_t$ and $u_0:\Sigma\to \mathbf R$ is a positive function to be determined later.

First we point out that the equation \eqref{Eq: QS} is solvable no matter what initial value $u_0$ is prescribed. To see this, notice that $\Sigma_t$ is convex all the time and so the scalar curvature $R(\bar g_t)$ is nonnegative. Then the parabolic comparison principle yields the following a priori $C^0$-estimate
$$
\min\left\{1,\min_\Sigma u_0\right\}\leq u\leq  \max\left\{1,\max_\Sigma u_0\right\}.
$$
This means that $u$ cannot blow up in finite time and so the solution $u$ exists on $\bar\Omega_{ext}$. 

Now we analyze the behavior of the solution $u$ at infinity. From the same comparison argument as in \cite[Lemma 2.2]{ST2002}, it is not difficult to deduce the basic estimate
\begin{equation}\label{Eq: estimate u}
|u-1|\leq Ct^{2-m},
\end{equation}
where $C$ is a universal constant independent of $t$ and $m=\dim\Sigma+1$. Next let us investigate the function $v=t^{m-2}(u-1)$. From above estimate we see that $v$ is uniformly bounded. From the equation \eqref{Eq: QS} the function $v$ satisfies
\begin{equation}\label{Eq: v}
\bar H_t\frac{\partial v}{\partial t}=u^2\Delta_{\bar g_t}v+\left((m-2)\frac{\bar H_t}{t}-\frac{1}{2}R(\bar g_t)u(u+1)\right)v.
\end{equation}
A flat $k$-torus $(T^k,g_{flat})$ can be viewed as the quotient space $(\mathbf R^k,g_{euc})/\Gamma$ for some lattice $\Gamma$ in $\mathbf R^k$. This induces a covering map
$$
\pi:\Omega_{ext}\times \mathbf R^k\to \bar\Omega_{ext}.
$$
For convenience, we will lift the functions $u$ and $v$ on $\Omega_{ext}\times \mathbf R^k$. Namely, we define $\tilde u=u\circ \pi$ and $\tilde v=v\circ \pi$. Clearly both $\tilde u$ and $\tilde v$ are $\Gamma$-invariant. For any positive integer $l$, we denote the scaling
\begin{equation}\label{Eq: parabolic scaling}
\Phi_l:\Sigma\times \mathbf R^k\times [0,+\infty)\to\Sigma\times \mathbf R^k\times[0,+\infty),\quad (\sigma,x,t)\mapsto (\sigma,lx,lt).
\end{equation}
Now let us consider the function
$
\tilde v_l=\tilde v\circ \Phi_l.
$
Denote $\tilde u_l=\tilde u\circ \Phi_l$. Let
$$\tilde H_{t,l}=l\cdot(\bar H_t\circ \pi\circ \Phi_l)$$
and 
$$
\tilde g_{t,l}=l^{-2}\cdot(\pi\circ\Phi_l)^*(\bar g_{lt}).
$$
It is not difficult to check that $\tilde v_l$ satisfies the equation
\begin{equation}
\tilde H_{t,l}\frac{\partial \tilde v_l}{\partial t}=\tilde u_l^2\Delta_{\tilde g_{t,l}}\tilde v_l+\left((m-2)\frac{\tilde H_{t,l}}{t}-\frac{1}{2}R(\tilde g_{t,l})\tilde u_l(\tilde u_l+1)\right)\tilde v_l.
\end{equation}
Finally we point out that all $\tilde v_l$ are $\Gamma$-invariant.

Now we would like to investigate the limit of functions $\tilde v_l$. First we need to understand the behavior of $\tilde v$ as $t\to+\infty$. Define
$$
\beta(t)=\min_{\Sigma\times \mathbf R^k} \tilde v(t,\cdot).
$$
Let us show that $\beta(t)$ has a limit as $t\to+\infty$. Notice that
$$
\beta(t)=\min_{\Sigma\times T^k}  v(t,\cdot).
$$
From the equation \eqref{Eq: v} and the estimate \eqref{Eq: estimate u} as well as the estimates for $\bar H_t$ and $R(\bar g_t)$ from \cite[Lemma 2.1]{ST2002}, the parabolic comparison principle yields 
$$
\beta(t_2)\geq \beta(t_1)-Ct_1^{-1},\quad \text{for all}\quad 1\leq t_1\leq t_2, 
$$
where $C$ is a universal constant independent of $t_1$ and $t_2$. This implies that $\beta(t)$ has a limit as $t\to+\infty$, denoted by $\beta_0$. Based on this we can show that the functions $\tilde v_l$ converge smoothly to the constant function $\tilde v_\infty\equiv \beta_0$ in compact subsets of $\Sigma\times \mathbf R^k\times (0,+\infty)$. To see this, first notice that all $\tilde v_l$ is uniformly bounded by the bound for $v$. It follows from \cite[Lemma 2.1]{ST2002} that 
$
\tilde H_{t,l}
$
converges to $(m-1)t^{-1}$ smoothly and that $\tilde g_{t,l}$ converges to $\tilde g_{t,\infty}=t^2g_{round}+g_{euc}$ smoothly as $l\to +\infty$, where $g_{round}$ is a smooth metric on $\Sigma$ with constant curvature $1$. As a result, $\tilde v_{l}$ has uniformly bounded $C^k$-estimate for any $k$. Up to a subsequence, $\tilde v_l$ converges to a limit smooth function $\tilde v_\infty$ satisfying
$$
\frac{m-1}{t}\frac{\partial \tilde v_\infty}{\partial t}=\Delta_{\tilde g_{t,\infty}}\tilde v_\infty.
$$
Notice that $\beta(t)$ is always attained by $\tilde v(t,\cdot)$ in a fixed bounded compact region in $\Sigma\times \mathbf R^k$. From previous discussion, $\tilde v_\infty$ attains its minimum $\beta_0$ in the interior of $\Sigma\times \mathbf R^k\times(0,+\infty)$. The strong maximum principle yields that $\tilde v_\infty\equiv \beta_0$ and so
\begin{equation}
\lim_{t\to+\infty}v(t,\cdot)=\beta_0.
\end{equation}

Now let us deduce a contradiction under the assumption
\begin{equation}\label{Eq: contradiction}
\int_{\partial\Omega} H\,\mathrm d\sigma_g> \Lambda(\Sigma,g_{\Sigma})\cdot \vol(F,g_F).
\end{equation}
Correspondingly let us set 
$$
u_0=\frac{\bar H_0}{H}.
$$
From a similar calculation as in \cite[Lemma 4.2]{ST2002}, it follows that the integral
\begin{equation}\label{Eq: total mean curvature slice}
\int_{\bar\Sigma_t}\bar H_t\left(u^{-1}(t,\cdot)-1\right)\mathrm d\sigma_{\bar g_t}
\end{equation}
is monotone increasing as $t$ increases. From the inequality \eqref{Eq: contradiction} we conclude that the constant $\beta_0$ is negative and so for $t$ large enough the mean curvature of $\Sigma_t$ with respect to the quasi-spherical metric $\bar g_{qs}$ is greater than that with respect to $\bar g$. From this we can obtain a contradiction with the same gluing argument as in the proof of Proposition \ref{Prop: PMT to PSC 2}.

For the equality case, we observe that the integral \eqref{Eq: total mean curvature slice} has to vanish all the time. After taking the derivative with respect to $t$ and using the equation of $u$, we see
$$
\int_{\bar \Sigma_t}R(\bar g_t)u^{-1}(t,\cdot)\left(u(t,\cdot)-1\right)^2\mathrm d\sigma_{\bar g_t}\equiv 0
$$
and so $u$ must be one identically. This yields that we can glue $(\Omega,g)$ with $(\bar\Omega_{ext},\bar g)$ along their boundaries with the same mean curvatures on both sides. Now the gluing argument is still valid except that we cannot construct a smooth metric with positive scalar curvature simply from smoothing and conformal tricks. In this case, we can use the Ricci-DeTurk flow (see \cite{ST2018} for instance) to construct a family of smooth metrics with nonnegative scalar curvature (Notice that the underlying space is closed now). From Proposition \ref{Prop: main 2} and \ref{Prop: main 3} it follows that these metrics are flat and so the metric $g$ has to be flat.

{\it Case 2. $\dim\Sigma=1$, i.e. $\Sigma$ is a closed curve.} The proof follows from the same idea as above but some analysis are different. Since all closed curves with the same length are isometric, we can assume $\Sigma$ to be some round curve $C_{r_0}$ without loss of generality. 
Let us adopt the same notations as above and consider the following quasi-spherical metric equation
\begin{equation}\label{Eq: QS 2}
\frac{1}{r_0+t}\frac{\partial u}{\partial t}=u^2\Delta_{\bar g_t}u,\quad u(0,\cdot)=u_0>0.
\end{equation}

First it follows from parabolic comparison principle that
$$
\min_{\Sigma\times T^k} u\leq u\leq \max_{\Sigma\times T^k} u.
$$
From this we conclude that $u$ exists on $\bar\Omega_{ext}$. We are going to show that $u(t,\cdot)$ converges to a constant as $t\to+\infty$. As before, we lift $u$ to a function $\tilde u$ on $\Omega_{ext}\times \mathbf R^k$ and denote $\tilde u_l=\tilde u\circ \Phi_l$ with $\Phi_l$ the scaling given by \eqref{Eq: parabolic scaling}. Denote 
$$
\beta(t)=\min_{\Sigma\times \mathbf R^k} \tilde u(t,\cdot).
$$
Based on the parabolic maximum principle, we know that $\beta(t)$ must be monotone when $t$ is large enough and so it has a limit $\beta_0$ as $t\to+\infty$. Similar as before, we can show that the function $\tilde u_l$ converges to $\beta_0$ as $l\to+\infty$ and so we have
$$
\lim_{t\to+\infty}u(t,\cdot)=\beta_0.
$$
From the equation \eqref{Eq: QS 2} we have
$$
\frac{\mathrm d}{\mathrm dt}\int_{\bar\Sigma_t}u^{-1}(t,\cdot)\,\mathrm d\sigma_{\bar g_t}=\frac{1}{r_0+t}\int_{\bar\Sigma_t}u^{-1}(t,\cdot)\,\mathrm d\sigma_{\bar g_t}.
$$
It then follows
$$
\beta_0=2\pi r_0\vol(T^k,g_{flat})\left(\int_{\bar\Sigma_0}u_0^{-1}\,\mathrm d\sigma_{\bar g_0}\right)^{-1}.
$$

Now let us deduce the contradiction under the assumption
$$
\int_{\partial\Omega}H\,\mathrm d\sigma_g>2\pi\vol(T^k,g_{flat}).
$$
Correspondingly we set $u_0=(r_0 H)^{-1}$ and so $\beta_0<1$. As a result, for $t$ large enough the mean curvature of $\Sigma_t$ with respect to the quasi-spherical metric $u^2\mathrm dt^2+\bar g_t$ is greater than that with respect to $\bar g$. Again we can obtain a contradiction with the same gluing argument as in the proof of Proposition \ref{Prop: PMT to PSC 2}. 

For the equality case, the scalar flatness and the Ricci flatness of the metric $g$ come from the deformation arguments in \cite[Theorem 4.2]{SWY2019} and \cite[Corollary 2.1]{MST2010} respectively. In the following, we would like to show that the metric $g$ is flat. From the proof of Theorem \ref{Thm: main 5} this is true if the regions $(\Omega,g)$ and $(\bar\Omega_{ext},\bar g_{qs})$ are glued in a smooth manner. So the main difficuly lies in the potential singularity along the corner and we plan to deal with this issue by Ricci-DeTurk flow. We hope that the desired flatness can be preserved during the Ricci flow and there are several things to be done. 

First let us check that the quasi-spherical metric $\bar g_{qs}$ is actually asymptotically conical with angle $2\pi$ at infinity. Let $v=u-1$. This is equivalent to the following estimate
\begin{equation}\label{Eq: decay qs metric}
|v|+r|\nabla_{\bar g}v|+r^2|\nabla^2_{\bar g}v|=O(r^{-\mu}),\quad \text{as}\quad r\to +\infty,
\end{equation}
for some $\mu>0$, where $r=r_0+t$ is the radical distance of $\mathbf R^2$. To analyze the behavior of the function $v$, we lift it to a function $\tilde v$ on $\Omega_{ext}\times \mathbf R^k$ and investigate its scaling $\tilde v_l=\tilde v\circ\Phi_l$ as before. From a direct computation, $\tilde v_l$ satisfies the following equation
\begin{equation}\label{Eq: uniform parabolic}
\frac{l}{r_0+l t}\frac{\partial \tilde v_l}{\partial t}=(\tilde v_l+1)^2\Delta_{\tilde g_{t,l}}\tilde v_l\quad\text{in}\quad   \Sigma\times \mathbf R^k\times[0,+\infty),
\end{equation}
where $\tilde g_{t,l}$ is the smooth metric on $\Sigma\times \mathbf R^k$ given by 
$$
\tilde g_{t,l}=\frac{(r_0+l t)^2}{l^2}\mathrm d\theta^2+g_{euc}.
$$
We point out that the equation $\eqref{Eq: uniform parabolic}$ gives a sequence of locally uniform parabolic equations. Denote
$$
M_l(t)=\max_{\Sigma\times \mathbf R^k}\tilde v_l(t,\cdot)\quad\text{and}\quad m_l(t)=\min_{\Sigma\times \mathbf R^k}\tilde v_l(t,\cdot).
$$
From parabolic maximum principle, the function $M_l(t)$ is decreasing and the function $m_l(t)$ is increasing as $t$ increases.
Recall that $\tilde v_l$ is $\Gamma$-invariant, where $\Gamma$ is the descrete group such that $(T^k,g_{flat})$ is the quotient $(\mathbf R^k,g_{euc})/\Gamma$.
From \cite[Theorem 1.1]{GS2021} and a covering argument we can apply the Harnack inequality to the functions $M_l(1)-\tilde v_l$ and $\tilde v_l-m_l(1)$ in $\Sigma\times \mathbf R^k\times[1,4]$. As a result, we obtain
\[
\begin{split}
M_l(1)-m_l(2)&=\max_{2\leq t\leq 3}(M_l(1)-\tilde v_l)\\
&\leq C\min_{2\leq t\leq 3}(M_l(1)-\tilde v_l)=C(M_l(1)-M_l(2))
\end{split}
\]
and
\[
\begin{split}
M_l(2)-m_l(1)&=\max_{2\leq t\leq 3}(\tilde v_l-m_l(1))\\
&\leq C\min_{2\leq t\leq 3}(\tilde v_l-m_l(1))=C(M_l(2)-m_l(1)),
\end{split}
\]
for some universal constant $C$ independent of $\tilde v_l$. Let us denote
$$\Osc_l(t)=M_l(t)-m_l(t)$$ and then we have $\Osc_l(2)\leq c\Osc_l(1)$ for some universal constant $0<c<1$ independent of $l$. From the construction of $\tilde v_l$, this further implies 
$$\Osc(2l)\leq c\Osc(l),\quad \forall\,l\geq 1,$$
where $\Osc(t)$ is denoted to be the oscillation of $v(t,\cdot)$ on $\Sigma\times T^k$. Now it is standard to deduce $\Osc(t)=O(t^{-\mu})$ for some $\mu>0$ as $t\to +\infty$. Since we have
$$
\int_{\bar\Sigma_t}u^{-1}(t,\cdot)\,\mathrm d\sigma_{\bar g_t}\equiv 1,
$$ 
the function $v$ either is identical to zero or changes sign on $\bar\Sigma_t$. As a result, its $C^0$-norm on $\bar\Sigma_t$ is well controlled by its oscillation and so we have the estimate $|v|=O(r^{-\mu})$ with $r=r_0+t$. As a consequence, we see that the function $\tilde v_l$ satisfies $|\tilde v_l(t,\cdot)|=O(l^{-\mu})$ when $1\leq t\leq 5$. With a similar argument as in the proof of \cite[Lemma 2.5]{ST2002}, we conclude that
$$
|\tilde v_l(t,\cdot)|+|\partial \tilde v_l(t,\cdot)|+|\partial^2\tilde v_l(t,\cdot)|=O(l^{-\mu}),\quad 2\leq t\leq 4.
$$
From the definition of the map $\Phi_l$ as well as the fact that the length of $\partial_\theta$ is comparable to $l$ when $2l\leq t\leq 4l$, it is easy to deduce
$$
|v|+t|\nabla_{\bar g}v|+t^2|\nabla^2_{\bar g}v|=O(l^{-\mu}),\quad 2l\leq t\leq 4l.
$$ 
This yields the desired estimate \eqref{Eq: decay qs metric}.

Denote $(\tilde M_{glue},\tilde g_{glue})$ to be the Riemannian manifold from the gluing of $(\Omega,g)$ and $(\bar\Omega_{ext},\bar g_{qs})$ along their boundaries. Next we would like to use a deformation argument to verify the Ricci flatness of $(\tilde M_{glue},\tilde g_{glue})$ in its smooth part as well as the coincidence of second fundamental forms of $\partial\Omega$ in $(\tilde M_{glue},\tilde g_{glue})$ from both sides with respect to the outer unit normal pointing to $(\bar \Omega_{ext},\bar g_{qs})$. Let $U$ be a region in $\tilde M_{glue}$ containing $\Omega$ with compact closure and $h$ be a smooth $(0,2)$-tensor in $U$ with compact support such that $h$ has the form of $h_{\alpha\beta}\mathrm dx^\alpha\otimes \mathrm dx^\beta$ in some Fermi coordinate $(s,x^\alpha)$ around $\partial\Omega$. In the following, we
consider the metrics $\tilde g_{\tau}=\tilde g_{glue}+\tau h$ for $\tau$ small. From a similar discussion as in \cite[P. 423-426]{Kato1995}, the quadratic form
$$
Q_\tau(u)=\int_U|\nabla_{\tilde g_\tau}\phi|^2+cR_{\tilde g_\tau}\phi^2\,\mathrm d\mu_{\tilde g_\tau}, \quad \phi\in C^\infty(U),\quad c=\frac{\dim\Omega-2}{4(\dim\Omega-1)},
$$
is analytic with respect to $\tau$ and so its first Neumann eigenvalue $\tilde \lambda_\tau$ and its first eigenfunction $\tilde u_\tau$ with $\|\tilde u_\tau\|_{L^2(U,\tilde g_\tau)}=\vol(U,\tilde g_{glue})$ are analytic with respect to $\tau$ as well. Since the scalar curvature $R(\tilde g_{glue})$ vanishes everywhere, we see $\tilde\lambda_0=0$ and $\tilde u_0\equiv 1$. From the choise of $h$, we also know that the mean curvatures of $\partial\Omega$ in $(\tilde M,\tilde g_\tau)$ are the same on two sides with respect to the outer unit normal, denoted by $\tilde H_\tau$. From a similar computation as in \cite[Lemma 2.1]{BC2019} (see also \cite[Lemma 2.3]{GZ2021}), we have
\[
\begin{split}
&\vol(U,\tilde g_{glue})\left.\frac{\mathrm d}{\mathrm d\tau}\right|_{\tau =0}\tilde \lambda_\tau\\
=&c\int_{U}\left.\frac{\partial}{\partial \tau}\right|_{\tau=0}R(\tilde g_\tau)\,\mathrm d\mu_{\tilde g_{glue}}\\
=&c\int_{\Omega}\left.\frac{\partial}{\partial \tau}\right|_{\tau=0}R(\tilde g_\tau)\,\mathrm d\mu_{\tilde g_{glue}}+2c\int_{\partial\Omega}\left.\frac{\partial}{\partial \tau}\right|_{\tau=0}\tilde H_\tau\,\mathrm d\sigma_{\tilde g_{glue}}\\
&+c\int_{U-\Omega}\left.\frac{\partial}{\partial \tau}\right|_{\tau=0}R(\tilde g_\tau)\,\mathrm d\mu_{\tilde g_{glue}}+2c\int_{\partial\Omega}\left.\frac{\partial}{\partial \tau}\right|_{\tau=0}(-\tilde H_\tau)\,\mathrm d\sigma_{\tilde g_{glue}}\\
=&2c\int_{U}\langle h,\Ric(\tilde g_{glue})\rangle_{\tilde g_{glue}}\,\mathrm d\mu_{\tilde g_{glue}}+2c\int_{\partial\Omega}\langle A_--A_+,h\rangle_{\tilde g_{glue}}\,\mathrm d\sigma_{\tilde g_{glue}},
\end{split}
\]
where $A_-$ and $A_+$ are second fundamental forms of $\partial\Omega$ in $\Omega$ and $\tilde M-\Omega$ with respect to the outer unit normal respectively. If the smooth part of $\tilde g_{glue}$ is not Ricci-flat or the second fundamental forms $A_-$ and $A_+$ are different, we can pick up a suitable $(0,2)$-tensor $h$ such that the first Neumann eigenvalue $\tilde \lambda_\tau$ becomes positive for some $\tau$ and then a contradiction can be derived from Lemma \ref{Lem: conformal factor} as well as the conformal deformation and compactification arguments in the proof of Theorem \ref{Thm: main 5}.

Now we are ready to investigate the Ricci-DeTurk flow from $(M,\tilde g_{glue})$. The argument here is very similar to that in \cite{MS2012}. Fix a background metric $h$ on $M$ such that $h=g_{euc}\oplus g_{flat}$ around infinity and
$$
(1+\epsilon)^{-1}h\leq \tilde g_{glue}\leq (1+\epsilon)h
$$
for sufficiently small $\epsilon$. M. Simon \cite{Simon2002} proved that there is a Ricci-DeTurk $h$-flow $\{\tilde g(s)\}_{0\leq s\leq T}$ from the metric $\tilde g_{glue}$ satisfying
$$
(1+2\epsilon)^{-1}h\leq \tilde g(s)\leq (1+2\epsilon)h,\quad \forall\,s\in[0,T].
$$
The above $h$-flow $\tilde g_h(s)$ is constructed by first taking a sequence of smoothings $\tilde g^\delta$ converging to $\tilde g_{glue}$ as $\delta\to 0$, then running $h$-flows $\{\tilde g_h^\delta(s)\}_{0\leq s\leq T}$ from $\tilde g^\delta$ on a uniform interval $[0,T]$ and finally taking the limit of $h$-flows $\tilde g_h^\delta(s)$ as $\delta\to 0$. We will have a careful analysis on this procedure. Since the metric $\tilde g_{glue}$ is now $C^{1,1}$, as in \cite{Miao2002} we can construct a family of smooth metrics $\tilde g^\delta$ on $M$ converging to $\tilde g_{glue}$ as $\delta\to 0$ such that
\begin{itemize}
\item $\tilde g^\delta$ differs from $\tilde g_{glue}$ in a fixed compact subset of $M$ containing $\partial\Omega$;
\item $\tilde g^\delta$ satisfies the decay estimate
\begin{equation}\label{Eq: C2 epsilon decay}
|\tilde g^\delta-h|_h+r|\nabla_h(\tilde g^\delta-h)|_h+r^2|\nabla_h^2(\tilde g^\delta-h)|\leq Cr^{-\mu}
\end{equation}
for a universal constant $C$ independent of $\delta$;
\item there are universal positive constants $C_1$, $C_2$ and $C_3$ independent of $\delta$ such that
$
R(\tilde g^\delta)\geq -C_1
$
and
$$
\int_M|R(\tilde g^\delta)|\,\mathrm d\mu_{\tilde g^\delta}\leq C_2,\quad \int_{M}R(\tilde g^\delta)_-\,\mathrm d\mu_{\tilde g^\delta}\leq C_3\delta,
$$
where $R(\tilde g^\delta)_-$ is denoted to be the negative part of $R(\tilde g^\delta)$.
\end{itemize}
In the following, a metric is said to be asymptotically flat in $C^2_\mu$ if it satisfies \eqref{Eq: C2 epsilon decay}. From the work of Shi in \cite{Shi1989}, we can run Ricci flows $\{\tilde g^\delta(s)\}_{0\leq s\leq T}$ from $\tilde g^\delta$ on a unifrom interval $[0,T]$. Based on weighted spaces or the maximum principle, it is not difficult to show that $\tilde g^\delta(s)$ is still asymptotically flat in $C^2_\mu$. Now arguing as in \cite[Section 3]{MS2012} we can show that 
\begin{equation}\label{Eq: almost nonnegative scalar curvature}
\int_{M}R(\tilde g^\delta(s))_-\,\mathrm d\mu_{\tilde g^\delta(s)}\leq C_3'\delta,
\end{equation}
where $C_3'$ is a universal constant depending on $s$ but independent of $\delta$. Now let us turn to the $h$-flow $\{\tilde g^\delta_h(s)\}_{0\leq s\leq T}$ from the smoothing metric $\tilde g^\delta$. The $h$-flow $\{\tilde g^\delta_h(s)\}_{0\leq s\leq T}$ differs from the Ricci flow $\{\tilde g^\delta(s)\}_{0\leq s\leq T}$ by a family of diffeomorphisms, but it has the advantage that we can take the limit $h$-flow $\tilde g_h(s)$ by letting $\delta\to 0$. From the maximum principle we argue as in \cite[Appendix]{MS2012} to conclude that $\tilde g^\delta_h(s)$ is asymptotically flat in $C^2_\mu$ with a uniform constant in the decay estimate \ref{Eq: C2 epsilon decay} (Notice that our starting metric is in $C^2_\mu$ which is better than the original situation in \cite[Appendix]{MS2012}). Notice that the estimate \eqref{Eq: almost nonnegative scalar curvature} also holds for $\tilde g^\delta_h$ and we conclude that the limit $h$-flow $\tilde g_h(s)$ has nonnegative scalar curvature. It follows from Theorem \ref{Thm: main 5} that the metric $\tilde g_h(s)$ has to be flat. Away from the corner $\partial\Omega$, the limit $h$-flow $\tilde g_h(s)$ converges smoothly to $\tilde g_{glue}$ and so $(\Omega,g)$ is flat.
\end{proof}

\begin{proof}[Proof for Proposition \ref{Prop: main 7}]
The proof will be divided into two cases:

{\it Case 1. $(T^2,g_{flat})$ is isometric to $\mathbf S^1(a)\times \mathbf S^1(b)$.} From Proposition \ref{Prop: main 6}, all we need to show is that if $\Omega$ is a compact $3$-manifold with boundary diffeomorphic to $\mathbf S^1(a)\times \mathbf S^1(b)$, then one of $\mathbf S^1(a)$ and $\mathbf S^1(b)$ is homotopically non-trivial in $\Omega$. This is actually a simple fact in topology and the reasoning is as follows. If $\mathbf S^1(a)$ is homotopically non-trivial, then we are done. Now let us assume that $\mathbf S^1(a)$ is homotopic to a point in $\Omega$. From Dehn's lemma we can find an embedded disk $D$ in $\Omega$ with boundary to be $\mathbf S^1(a)$. Notice that $\partial\Omega\cup D$ is homeomorphic to a solid torus minus a ball. So $\Omega$ is homeomorphic to a compact $3$-manifold in the form of $B^2\times \mathbf S^1(b)\#N$, where $N$ is a closed $3$-manifold. Now the circle $\mathbf S^1(b)$ must be homotopically non-trivial in $\Omega$. 
From above discussion we can apply Proposition \ref{Prop: main 6} to conclude
\[
\begin{split}
\Lambda(T^2,g_{flat})&\leq \max\left\{2\pi b\cdot\Lambda(\mathbf S^1(a)),2\pi a\cdot\Lambda(\mathbf S^1(b))\right\}\\
&=4\pi^2\max\{a,b\}.
\end{split}
\]

{\it General case.} The proof is the same as in \cite{SWWZ2021} which is based on the quasi-spherical metric and a gluing procedure. First observe that all flat metrics on $T^2$ actually form a connected space. To see this, we start with an arbitrary flat metric $g_{flat}$ on $T^2$. It is well-known that $( T^2,g_{flat})$ can be viewed as a quotient space $\mathbf R^2/\Gamma$ for some lattice  $\Gamma$ in $\mathbf R^2$. We can find an orientation-preserving affine transformation $\Phi:\mathbf R^2\to \mathbf R^2$ such that $\Phi(\mathbf Z^2)=\Gamma$. From \cite[Corollary 3.6]{GM2018} we can pick up a smooth family of orientation-preserving affine transformations
$\{\Phi_t\}_{0\leq t\leq 1}$ with $\Phi_0=\Phi$ and $\Phi_1=\id$. Then the pullback metric $\Phi_t^*(g_{euc})$ induces a smooth family of flat metrics $\{g_t\}_{0\leq t\leq 1}$ on $T^2$ with $g_0=g_{flat}$ and $g_1$ to be the product metric $g_{prod}=\mathrm d\theta_1^2+\mathrm d\theta_2^2$. 

Now the basic idea is to extend any admissible fill-in of $(T^2,g_{flat})$ to an admissible fill-in of $(T^2,\lambda^2 g_{prod})$ for some $\lambda>0$ and track the change of mean curvature. The key turns out to be the construction of the neck region illustrated in Figure \ref{Fig: 8}. 
\begin{figure}[htbp]
\centering
\includegraphics[width=7cm]{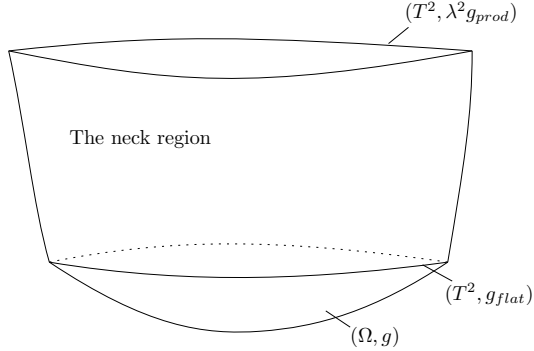}
\caption{The extension of admissible fill-in}
\label{Fig: 8}
\end{figure}

Notice that there is a positive constant $k$ depending only on $\{\Phi_t\}_{0\leq t\leq 1}$ such that the metric $\bar g=\mathrm ds^2+s^2g_{\frac{s-1}{k}}$ on $[1,k+1]\times T^2$ satisfying the estimate
\begin{equation}\label{Eq: second fundamental form}
\left|\bar A_s-\frac{1}{s}\bar g_{\frac{s-1}{k}}\right|_{g_{\frac{s-1}{k}}}\leq \frac{1}{2s},\quad \text{where}\quad \bar g_{\frac{s-1}{k}}=s^2g_{\frac{s-1}{k}},
\end{equation}
where $\bar A_s$ is the second fundamental form of $\{s\}\times T^2$ with respect to $\partial_s$. We now investigate the following  quasi-spherical equation
$$
\bar H_s\frac{\partial u}{\partial s}=u^2\Delta_{\bar g_{\frac{s-1}{k}}}u-\frac{1}{2}R(\bar g)u,\quad u(0,\cdot)=u_0>0.
$$
The solution $u$ exists on $[1,k+1]\times T^2$ for any positive initial value $u_0$ due to the same argument as in the proof of Proposition \ref{Prop: main 6}.
A direct computation combined with \eqref{Eq: second fundamental form} yields
\begin{equation}\label{Eq: change of mean curvature}
\begin{split}
\frac{\mathrm d}{\mathrm ds}\int_{\{s\}\times  T^2}\bar H_s u^{-1}\,\mathrm d\sigma_s &=\frac{1}{2}\int_{\{s\}\times  T^2}\left(\bar H_s^2-|\bar A_s|^2\right)u^{-1}\,\mathrm d\sigma_s\\
&\geq \frac{1}{6s}\int_{\{s\}\times  T^2}\bar H_s u^{-1}\,\mathrm d\sigma_s.
\end{split}
\end{equation}

Let us start with an admissible fill-in $(\Omega,g)$ of $(T^2,g_{flat})$, whose boundary has total mean curvature $T_0$. After setting suitable initial value $u_0$, we can construct a quasi-spherical metric $\tilde g=u^2\mathrm ds^2+s^2\gamma_{\frac{s-1}{k}}$ on the neck region such that the mean curvature on two sides of the corner coincide. With a handle of corners as in \cite{Miao2002}, we can construct an admissible fill-in $(\Omega',g')$ of $\left(T^2,(k+1)^2g_{prod}\right)$.  From \eqref{Eq: change of mean curvature} it follows that the total mean curvature of $\partial\Omega'$ is no less than $C(k)T_0$.  From the discussion of Case 1, we conclude
$$\Lambda(T^2,g_{flat})\leq 4\pi^2(k+1)C(k)^{-1}<+\infty.$$
This completes the proof.
\end{proof}

\newpage
\appendix
\section{Topological preparation}
In this appendix, we collect some basic topological results used in previous sections. First let us start with the following
\begin{lemma}\label{Lem: realize}
Let $M^n$ be an orientable differentable $n$-manifold (compact or non-compact) without boundary. Then any homology class in $H_{n-1}(M,\mathbf Z)$ can be represented by an embedded hypersurface.
\end{lemma}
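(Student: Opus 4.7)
The strategy is to reduce to a compact submanifold-with-boundary via Poincar\'e--Lefschetz duality, and then realize the class as the preimage of a regular value under a suitable map to $S^1$.

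First, any singular cycle $c$ representing a given class $\alpha\in H_{n-1}(M,\mathbf Z)$ has compact support by definition. So I would pick a compact codimension-zero submanifold $N\subset M$ with smooth boundary whose interior contains the support of $c$ (this is standard, e.g.\ via a regular value of a proper smooth exhaustion function). Denoting the inclusion by $\iota:N\hookrightarrow M$, there is a class $\alpha_N\in H_{n-1}(N,\mathbf Z)$ with $\iota_\ast\alpha_N=\alpha$, and it suffices to realize $\alpha_N$ by an embedded hypersurface lying in the interior of $N$.

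Second, since $N$ is an oriented compact $n$-manifold with boundary, Poincar\'e--Lefschetz duality supplies an isomorphism
$$\mathrm{PD}:H^1(N,\partial N;\mathbf Z)\xrightarrow{\;\sim\;} H_{n-1}(N;\mathbf Z),$$
so there is a class $\omega\in H^1(N,\partial N;\mathbf Z)$ with $\mathrm{PD}(\omega)=\alpha_N$. Because $S^1=K(\mathbf Z,1)$, the class $\omega$ is represented by a continuous map $f:(N,\partial N)\to(S^1,\ast)$; using a collar neighbourhood of $\partial N$, we can smooth $f$ on $N$ while keeping $f\equiv\ast$ on $\partial N$, without changing its homotopy class rel $\partial N$.

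Third, by Sard's theorem pick a regular value $q\in S^1\setminus\{\ast\}$. Then $\Sigma:=f^{-1}(q)$ is a smooth properly embedded hypersurface of $N$ which is automatically disjoint from $\partial N$, hence embedded in $M$. The orientation of $N$ together with the standard orientation of $S^1$ co-orients $\Sigma$, and the standard differential-topological identification of $\mathrm{PD}$ (see e.g.\ Bott--Tu) shows that the fundamental class of $\Sigma$ in $H_{n-1}(N;\mathbf Z)$ equals $\mathrm{PD}(\omega)=\alpha_N$; pushing forward under $\iota$ yields $\alpha$, as required.

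The only mildly delicate step is verifying the existence of a compact codimension-zero submanifold with smooth boundary containing the support of $c$ (trivial for compact $M$, and for non-compact $M$ it follows from a proper Morse or distance-type function on $M$); everything else is standard transversality together with the identification of Poincar\'e duality with preimages of regular values of maps to $S^1$.
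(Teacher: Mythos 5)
Your proof is correct, and it follows a genuinely different route from the paper's. Both arguments reduce the non-compact case to a compact codimension-zero submanifold with smooth boundary and then realize the class as a preimage of a regular value of a map to $S^1$, but the duality used and the subsequent work differ. The paper takes a cycle $C$ supported in a compact $K$, considers its image in $H_{n-1}(K,\partial K;\mathbf Z)\cong H^1(K;\mathbf Z)$, and represents the dual by a map $K\to S^1$ with no boundary constraint. The resulting preimage of a regular value is only a \emph{properly} embedded hypersurface whose boundary sits on $\partial K$, so the paper must then close it up: it splits into a separate branch when $[C]$ dies in $H_{n-1}(K,\partial K)$, and otherwise invokes a chain-level bookkeeping argument together with the decomposition theorem from geometric measure theory and a normal-direction perturbation to graft pieces of $\partial K$ onto the properly embedded piece and produce an embedded closed representative. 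You instead dualize the class of interest in $H_{n-1}(N;\mathbf Z)$ itself, using the \emph{other} Lefschetz duality $H^1(N,\partial N;\mathbf Z)\cong H_{n-1}(N;\mathbf Z)$, so the representing class lives in relative $H^1$ and is represented by a map of pairs $(N,\partial N)\to (S^1,*)$. After smoothing rel a collar, the preimage of a regular value $q\neq *$ is automatically a closed hypersurface contained in the interior, and the identification of $[f^{-1}(q)]$ with the Lefschetz dual of $f^*\eta$ is the standard Thom-class/Pontryagin--Thom fact. This unifies the compact and non-compact cases, removes the case split entirely, and avoids the GMT decomposition and perturbation step; it is arguably the more natural duality to use here, since the boundary condition on the $S^1$-valued map is exactly what guarantees the level set stays in the interior. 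The one place you should be slightly more explicit is that $f$ should be made constant on a full collar neighbourhood of $\partial N$ (not just on $\partial N$), so that the smoothing can be done rel the collar without disturbing the boundary values --- you gesture at this but it is worth stating.
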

\begin{proof}
We divide the discussion into two cases.

{\it Case 1. $M$ is compact.} Let $\beta$ be a homology class in $H_{n-1}(M,\mathbf Z)$. If $\beta$ is zero, then we take a small geodesic sphere around a point. Otherwise, its Poincar\'e dual $\beta^*$ is a non-trivial element in $H^1(M,\mathbf Z)$. From algebraic topology (see \cite[Theorem 4.57]{Hatcher2002}), the cohomology group $H^1(M,\mathbf Z)$ is isomorphic to $[M,\mathbf S^1]$, where $[M,\mathbf S^1]$ is the set of all homotop classes of continuous maps from $M$ to $\mathbf S^1$. Take $f$ to be a smooth map whose homotopy class corresponds to $\beta^*$. Then $f$ is surjective since $\beta^*$ is non-trivial. Pick up a regular value $q$ of $f$. The homology class $\beta$ is now represented by the embedded hypersurface $f^{-1}(q)$.

{\it Case 2. $M$ is non-compact.} Let $\beta$ be a homology class in $H_{n-1}(M,\mathbf Z)$. As before, we just need to deal with the case when $\beta$ is non-trivial. Take a cycle $C$ representing the homology class $\beta$ and fix a smooth compact subset $K$ of $M$ containing the support of $C$. If $C$ represents a trivial homology class in $H_{n-1}(K,\partial K,\mathbf Z)$, then $C$ is homologous to a linear combination of boundary components of $\partial K$. It is clear that the latter provides an embedded hypersurface representing the class $\beta$ since each component with multiplicity can be perturbed to disjoint copies nearby. Otherwise, the Poincar\'e dual of $C$ is a non-trivial cohomology class in $H^1(K,\mathbf Z)$. The same argument as in Case 1 yields the existence of an embedded hypersurface $\Sigma$ homologous to $C$ in $H_{n-1}(K,\partial K,\mathbf Z)$. Without loss of generality, we can assume that $\Sigma$ intersects with $\partial K$ transversely. By definition, we can find $n$-chain $\Omega$ in $K$ and $(n-1)$-chain $C'$ in $\partial K$ such that $C=\partial\Omega+\Sigma+C'$. Since $C$ is a cycle, we see $\partial C'=\partial\Sigma$ and its support is $\Sigma\cap\partial K$, which consists of disjoint hypersurfacs in $\partial K$. From the decomposition theorem from geometric measure theory (see \cite[Theorem 27.6]{Leon1983}), we can find a sequence of decreasing open subsets $\{U_i\}_{i\in \mathbf Z}$ in $\partial K$ such that
$$
C'=\sum_{i>0}U_i-\sum_{i\leq 0}(\partial K-U_i)
$$
and
$$
\partial C'=\sum_{i\in \mathbf Z}\partial U_i.
$$
Since the intersection $\Sigma\cap \partial K$ has multiplicity one, we see that the boundaries $\partial U_i$ are pairwisely disjoint and so there are only finitely many open sets $U_i$, where $i=-p,1-p,\ldots,0,\ldots, q$. As shown in the Figure \ref{Fig: 2}, we can perturbe $U_i$ along the normal direction a little bit such that the union
$$
\Sigma\cup\left(\bigcup_{i=1}^q U_i\right)\cup\left(\bigcup_{i=-p}^0(\partial K-U_i)\right)
$$
is perturbed to be an embedded hypersurface homologous to $C$. This just completes the proof when $M$ is non-compact.
\end{proof}
\begin{figure}[htbp]
\centering
\includegraphics[width=5cm]{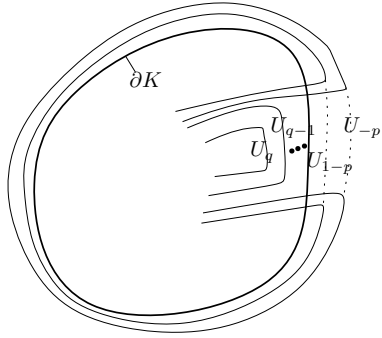}
\caption{Perturbation of open subsets $U_i$}
\label{Fig: 2}
\end{figure}

\begin{definition}
Let $M$ be a differential manifold with non-empty closed connected boundary $\partial M$ and a marked point $p$ in $\partial M$. We say that $M$ has the lifting property if there is a positive integer $k$ such that for any subgroup $H$ of $\pi_1(\partial M,p)$ we can find a covering $\tilde p:\tilde M\to M$ satisfying
\begin{itemize}
\item $\partial\tilde M$ has $k$ components;
\item
 each component $\tilde C$ of $\partial\tilde M$ satisfies
$
\tilde p_*(\pi_1(\tilde C,\tilde p))=H
$
for some point $\tilde p$ in $\tilde C$.
\end{itemize}
\end{definition}

\begin{lemma}\label{Lem: lifting property example}
The manifold $T^k\times (T^{n-k}-B)$ with any marked point $p$ satisfies the lifting property.
\end{lemma}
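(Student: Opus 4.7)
The plan is to exploit the product structure $M = T^k \times (T^{n-k} - B)$ and construct the required covering from coverings of the two factors. First I would identify $\pi_1(M) = \mathbf Z^k \times \pi_1(T^{n-k} - B)$ and $\pi_1(\partial M) = \mathbf Z^k \times \pi_1(\mathbf S^{n-k-1})$, noting that the inclusion $i_\ast\colon \pi_1(\partial M) \to \pi_1(M)$ is the product of the identity on $\mathbf Z^k$ with the boundary inclusion into $T^{n-k}-B$.

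When $n - k \geq 3$ the sphere $\mathbf S^{n-k-1}$ is simply connected, so $\pi_1(\partial M) = \mathbf Z^k$. For any subgroup $H \leq \mathbf Z^k$ I would take the covering $\tilde T^k \to T^k$ corresponding to $H$ and form the product cover $\tilde M = \tilde T^k \times (T^{n-k} - B) \to M$. Since both $\tilde T^k$ and $\mathbf S^{n-k-1}$ are connected, the boundary $\partial \tilde M = \tilde T^k \times \mathbf S^{n-k-1}$ is a single component whose image under $\tilde p_\ast$ in $\pi_1(\partial M)$ is exactly $H$, giving the lifting property with one boundary component.

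When $n - k = 2$ we have $\pi_1(\partial M) = \mathbf Z^k \times \mathbf Z$ and $\pi_1(T^2 - B) = F_2 = \langle a, b\rangle$ with the boundary circle representing the commutator $[a,b]$. Given $H \leq \mathbf Z^k \times \mathbf Z$, I would let $H_1 \leq \mathbf Z^k$ be its projection to the first factor and write $H \cap (\{0\} \times \mathbf Z) = m\mathbf Z$. In the product case $H = H_1 \times m\mathbf Z$ with $m \geq 1$, I would take the $T^k$-cover corresponding to $H_1$ together with the cover of $T^2 - B$ corresponding to the normal closure $N$ of $[a,b]^m$ in $F_2$. In the quotient $F_2/N = \langle a, b \mid [a,b]^m\rangle$ the commutator $[a,b]$ has exact order $m$, so $N \cap \langle [a,b]\rangle = \langle [a,b]^m\rangle$; by normality the same holds for every conjugate of $\langle [a,b]\rangle$, forcing every boundary component of the product cover to have fundamental group $H$.

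The main obstacle is the case $n - k = 2$ for $H$ either non-product (e.g.\ the diagonal in $\mathbf Z^{k+1}$) or with $m = 0$. For non-product $H$ I would first produce a surjection $\pi_1(\partial M) \to \pi_1(\partial M)/H$ and extend it to a surjection $\phi\colon \mathbf Z^k \times F_2 \to G$ onto a possibly non-abelian target $G$ (for instance a Heisenberg-type extension in which $[a,b]$ maps to a central element matching the constraint imposed by $H$); the kernel $K = \ker \phi$ is normal, so by the same conjugation argument as before every boundary component of the corresponding cover has fundamental group $H$. The delicate point will be controlling the number of boundary components of this cover, since covers of $T^2 - B$ that fail to close up the boundary circle (as is forced when $m = 0$) necessarily produce non-compact boundary components.
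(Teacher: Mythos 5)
Your treatment of $n-k\geq 3$ is fine and matches the paper. The gap is in the case $n-k=2$, and it already occurs in the sub-case you treat as settled. Passing to the cover of $T^2-B$ associated to the normal closure $N=\langle\langle[a,b]^m\rangle\rangle$ does give boundary components whose holonomy in $\pi_1(\partial M)$ is $\langle\gamma^m\rangle$, but it gives \emph{infinitely many} of them: for a connected normal cover with deck group $G$, the boundary circle lifts to $[G:\langle\bar\gamma\rangle]$ components, where $\bar\gamma$ is the image of $[a,b]$; here $G=F_2/N=\langle a,b\mid[a,b]^m\rangle$ is infinite (it surjects onto $\mathbf Z^2$) while $\langle\bar\gamma\rangle\cong\mathbf Z/m$ is finite, so the index is infinite. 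The lifting property requires a \emph{fixed} finite number $k$ of boundary components, uniform over all $H$, so this cover is inadmissible. The same obstruction kills the Heisenberg-type kernel you propose in the last paragraph: whenever $H$ has infinite index in $\pi_1(\partial M)$, any surjection $\psi\colon\pi_1(M)\to G$ with $\ker\psi\cap i_*(\pi_1(\partial M))=i_*(H)$ forces $[G:\psi(i_*(\pi_1(\partial M)))]=\infty$. So normal covers are structurally the wrong tool throughout, not only when $m=0$; and, incidentally, non-compactness of a boundary component is not itself a defect --- the definition only constrains the count and the $\pi_1$-images.

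The paper's proof avoids this by building \emph{non-normal} covers of $T^2-B$ explicitly. It represents $T^2-B$ as a punctured square with opposite edges identified and assembles copies of this polygon along a slit so that the resulting cover has exactly two boundary components, each tracing $\gamma^k$ when $H=\langle\gamma^k\rangle$ (Figure~5), and an infinite strip-like assembly with two non-compact boundary components covering $\gamma$ by $\mathbf R$ when $H$ is trivial (Figure~6). This establishes the lifting property for $T^2-B$ with constant $2$, and the $T^k$ factor is then absorbed by taking a connected cover of $T^k$ in product. To repair your argument one would need, for each $H$, a \emph{non-normal} subgroup of $\pi_1(M)$ whose double-coset count $|K\backslash\pi_1(M)/i_*(\pi_1(\partial M))|$ is uniformly bounded while every conjugate-intersection $i_*^{-1}(gKg^{-1})$ equals $H$ --- precisely the bookkeeping the paper's cut-and-paste picture sidesteps.
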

\begin{proof}
Since $\pi_1(T^k\times N)=\pi_1(T^k)\oplus \pi_1(N)$, all we need to show is that $T^n-B$ satisfies the lifting property. For $n\geq 3$, since the fundamental group $\pi_1(\partial(T^n-B))$ is trivial, the lifting property holds for $T^n-B$ without lifting.

Next we focus on the case when $n=2$ and the proof is a direct construction. First notice that $T^2-B$ can be viewed as the quotient space of a `+' shaped stripe by identifying the opposite edges (see Figure \ref{Fig: 4}). Fix a point $x_0$ in $\partial(T^2-B)$, then the fundamental group $\pi_1(\partial(T^2-B),x_0)$ is generated by the boundary curve $\gamma$.
\begin{figure}[htbp]
\centering
\includegraphics[width=5cm]{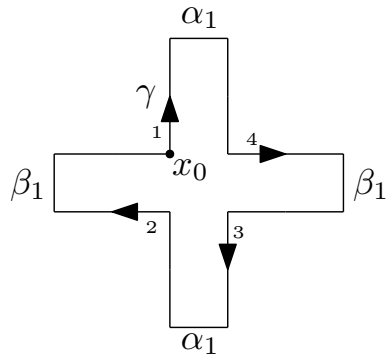}
\caption{$T^2-B$ comes from a `+' shaped stipe by identifying opposite edges $\alpha_1$ and $\beta_1$. The fundamental group $\pi_1(\partial(T^2-B),x_0)$ is generated by above closed curve $\gamma$.}
\label{Fig: 4}
\end{figure}

With the help of this polygon representation, given any subgroup $H$ of $\pi_1(\partial(T^2-B),x_0)$ it is not difficult to construct the desired covering $\tilde M$ of $T^2-B$ with two boundary components. The construction is divided into the following two cases:

{\it Case 1. The subgroup $H$ is generalized by some $\gamma^k$.} The desired covering is given by the polygon representation in Figure \ref{Fig: 5}.
\begin{figure}[htbp]
\centering
\includegraphics[width=8cm]{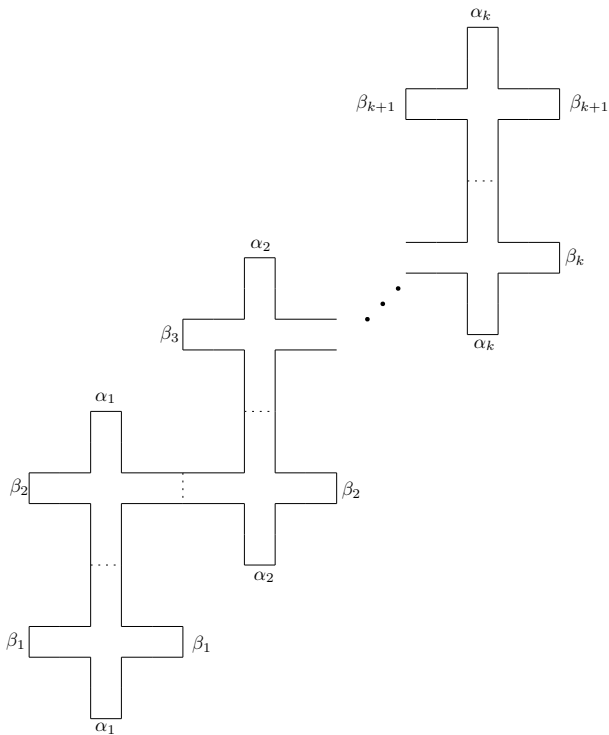}
\caption{The covering of $T^2-B$ with two boundary components corresponding to the subgroup $H=\langle \gamma^k\rangle$.}
\label{Fig: 5}
\end{figure}

{\it Case 2. The subgroup $H$ is the trivial group.} Now it is clear that the desired covering must be non-compact and so we need to find the desired covering with an infinite polygon representation. The construction is illustrated in the following Figure \ref{Fig: 6}.
\begin{figure}[htbp]
\centering
\includegraphics[width=8cm]{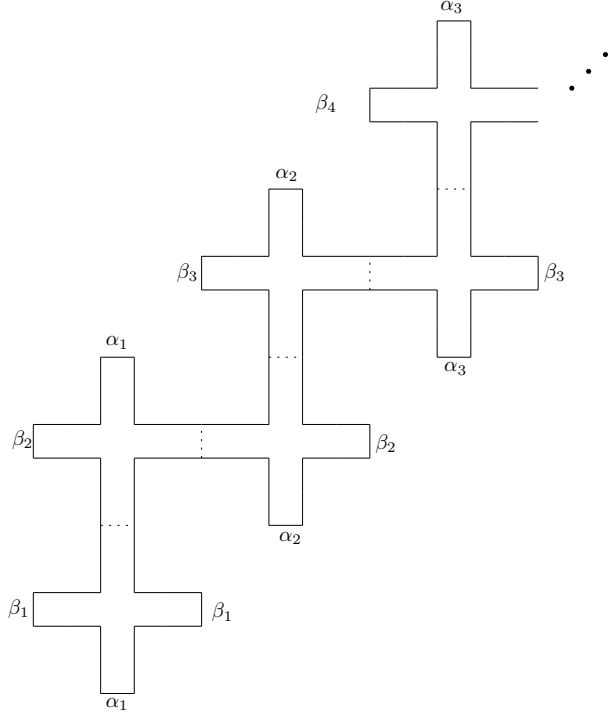}
\caption{The covering of $T^2-B$ with two boundary components corresponding to the trivial subgroup.}
\label{Fig: 6}
\end{figure}
\end{proof}

\begin{lemma}\label{Lem: no contraction extension}
Let $M$ be a manifold with non-empty closed connected boundary $\partial M$, which satisfies the lifting property. Then for any gluing space $M\sqcup_\phi N$ with $\phi:\partial M\to \partial N$ a diffeomorphism, a closed curve $\gamma:\mathbf S^1\to N$ is homotopically non-trivial in $M\sqcup_\phi N$ if and only if it is homotopically non-trivial in $N$.
\end{lemma}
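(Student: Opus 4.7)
The implication ``non-trivial in $M\sqcup_\phi N$ implies non-trivial in $N$'' is elementary: a null-homotopy of $\gamma$ in $N$ is automatically a null-homotopy in the larger glued space. The substantive content is the reverse direction, and the plan is to prove it by constructing an explicit connected covering $p\colon X\to M\sqcup_\phi N$, built as a ``tree of spaces'', in which $\gamma$ lifts to a non-closed curve whenever $[\gamma]\ne 1$ in $\pi_1(N)$.

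We set $H=\ker\!\bigl((i_N)_*\colon \pi_1(\partial N)\to \pi_1(N)\bigr)$ and identify it with a subgroup of $\pi_1(\partial M)$ via $\phi_*$. Applying the lifting property of $M$ with this subgroup produces a covering $\tilde p_M\colon \tilde M\to M$ with exactly $k$ boundary components $\tilde C_1,\dots,\tilde C_k$, each of which, as a covering of $\partial M$, corresponds to $H$. Simultaneously, let $q\colon \hat N\to N$ be the universal cover. A direct examination of the deck action will show that every connected component of $q^{-1}(\partial N)$ is a covering of $\partial N$ whose corresponding subgroup in $\pi_1(\partial N)$ is precisely $H$, with the set of components indexed by the coset space $\pi_1(N)/(i_N)_*\pi_1(\partial N)$. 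Since $\phi$ is a diffeomorphism inducing an isomorphism $\pi_1(\partial M)\cong \pi_1(\partial N)$ sending $H$ to $H$, the abstract $H$-coverings of $\partial M$ and $\partial N$ are canonically homeomorphic through lifts of $\phi$.

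We then assemble $X$ inductively: start with one ``base'' copy of $\hat N$; to each boundary component of $\hat N$ glue a fresh copy of $\tilde M$ via a lift of $\phi$, using one of the $\tilde C_i$'s for the gluing; each newly attached $\tilde M$ then has $k-1$ free boundary components, to each of which we glue a fresh copy of $\hat N$ along one of its boundary components; iterate indefinitely. The covering maps $\tilde p_M$ and $q$ will assemble into a continuous map $p\colon X\to M\sqcup_\phi N$; local triviality of $p$ in the interiors of $M$ and $N$ is inherited from $\tilde p_M$ and $q$, while across $\partial M=\partial N$ it follows because every gluing is performed via a lift of $\phi$, so collar neighborhoods of boundary points are evenly covered.

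Finally, lifting $\gamma$ to the base copy of $\hat N\subset X$: because $\hat N$ is simply connected and $[\gamma]\ne 1$ in $\pi_1(N)$, this lift has distinct endpoints, hence is not a closed curve in $X$, and so $\gamma$ is not null-homotopic in $M\sqcup_\phi N$. The main obstacle in this plan is verifying that the tree of spaces genuinely carries the structure of a covering—this is exactly where the lifting property enters essentially, since it is precisely what supplies a cover of $M$ whose every boundary component realizes the specific subgroup $H=\ker(i_N)_*$, without which the iterative gluing could not be performed and would not produce a well-defined covering of $M\sqcup_\phi N$.
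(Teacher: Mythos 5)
Your proposal is correct and follows the same underlying strategy as the paper: build a covering of $M\sqcup_\phi N$ by gluing copies of the universal cover $\hat N$ to coverings of $M$ produced by the lifting property (with boundary identifications made along lifts of $\phi$), and then observe that $\gamma$ lifts inside a single copy of $\hat N$, where nontriviality of $[\gamma]$ in $\pi_1(N)$ forces the lift to have distinct endpoints. You simplify the bookkeeping by noting that every boundary component of $\hat N$ is the covering of $\partial N$ corresponding to the one normal subgroup $H=\ker\bigl((i_N)_*\bigr)$ (a loop $\alpha$ in $\partial N$ lifts to a loop at any point of the fiber over $y$ exactly when $(i_N)_*[\alpha]=1$, independently of which lift of $y$ is chosen), so a single application of the lifting property suffices; the paper instead introduces subgroups $H_i=(\phi^{-1}\circ p)_*(\pi_1(\tilde\partial_i))$ depending on the boundary component $\tilde\partial_i$ and applies the lifting property once per $i$, without observing that all the $H_i$ coincide, so your version is tidier. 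The gluing pattern also differs: the paper glues $k$ copies of $\hat N$ to $\sqcup_i\tilde M_i$ by identifying the $j$-th boundary component $\tilde C_{ij}$ of $\tilde M_i$ with the $i$-th boundary component of the $j$-th copy of $\hat N$, a compact pattern, while you grow an infinite tree of spaces. Both are genuine coverings (in the tree construction every boundary component is eventually glued and each gluing is along a lift of $\phi$, so local triviality holds across the corner), and both deliver the same endgame.
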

\begin{proof}
One direction is clear. Namely, if we know $\gamma$ is already homotopically non-trivial in $M\sqcup_\phi N$, then it is also homotopically non-trivial in $N$. Next we show the converse. Let us fix a marked point $y$ in $\partial N$ and take the universal covering $p:\tilde N\to N$. Since the fundamental group of a manifold is always countable, the preimage $p^{-1}(y)$ contains countably many points. As a result, we know that $\partial\tilde N$ has countably many components. Let us label them as $\tilde\partial_i$ with $i\in\mathbf N_+$. Clearly the boundary $\partial\tilde N$ is just the disjoint union of all $\tilde \partial_i$. Notice also that the map $p$ restricted to each $\tilde\partial_i$ gives a covering map from $\tilde\partial _i$ to $\partial N$. We will fix a marked point $\tilde y_i$ in $p^{-1}(y)\cap \tilde \partial_i$ for each $\tilde \partial_i$ in the following discussion.

Next we will extend $\tilde N$ to be a covering of $M\sqcup_\phi N$ based on the lifting property of $M$. Denote $x=\phi^{-1}(y)$ and $H_i$ to be the subgroup of $\pi_1(\partial M,x)$ given by $(\phi^{-1}\circ p)_*(\pi_1(\tilde\partial_i))$. From the lift property of $M$ there is a positive integer $k$ independent of $i$ such that we can find a covering $p_i:\tilde M_i\to M$ such that
\begin{itemize}
\item $\partial \tilde M_i$ has $k$ components,
\item each component $\tilde C_{ij}$ has a marked point $\tilde x_{ij}$ such that
$$
(p_i)_*(\pi_1(\tilde C_{ij},\tilde x_{ij}))=H_i.
$$
\end{itemize}
From this we can lift $\phi$ to be a diffeomorphism $\tilde \phi_{ij}:(\tilde C_{ij},\tilde x_{ij})\to (\tilde \partial_i,\tilde y_i)$ such that the following diagram
\begin{equation*}
\xymatrix{(\tilde C_{ij},\tilde x_{ij})\ar[r]^{\tilde\phi_{ij}}\ar[d]^{p_{ij}}&(\tilde \partial_i,\tilde y_i)\ar[d]^{p}\\
(\partial M,x)\ar[r]^{\phi}&(\partial N,y)}
\end{equation*}
is commutative. Take the disjoint union
$
\tilde M=\sqcup_i \tilde M_i
$. Then we can glue $\tilde M$ with $k$ copies of $\tilde N$ to obtain a covering $\tilde p:\tilde M\sqcup k\tilde N \to M\sqcup_{\phi} N$. The gluing is illustrated in Figure \ref{Fig: 3}.
\begin{figure}[htbp]
\centering
\includegraphics[width=5cm]{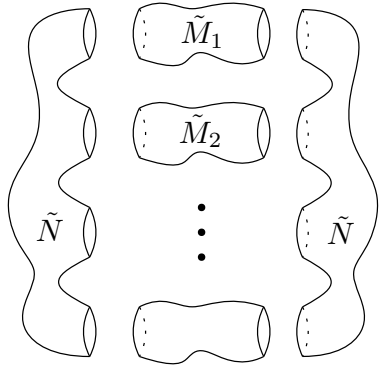}
\caption{$\tilde M$ glued with $2$ copies of $\tilde N$ when $M$ satisfies the lifting property with $k=2$.}
\label{Fig: 3}
\end{figure}
\\
It is clear we have the following commutative diagram
\begin{equation*}
\xymatrix{\tilde N\ar[r]^{i\quad}\ar[d]^{\tilde p}&\tilde M\sqcup k\tilde N\ar[d]^{\tilde p}\\
N\ar[r]^{i\quad}&M\sqcup_\phi N,}
\end{equation*}
where $i$ represents the canonical inclusion map.

Now let us assume that $\gamma:\mathbf S^1\to N$ is homotopically non-trivial in $N$ and explain the reason why $i\circ \gamma$ is homotopically non-trivial in $M\sqcup_\phi N$. Since $\tilde p:\tilde N\to N$ is the universal covering, $\gamma$ can be lift to a path $\tilde \gamma:[0,1]\to \tilde N$ with different end points. Notice that $i\circ \tilde\gamma$ is exactly the lift of $i\circ \gamma$ under the covering $\tilde p:\tilde M\sqcup k\tilde N\to M\sqcup_\phi N$ and it still has different end points. This yields that $i\circ \gamma$ cannot be homotopic to a point in $M\sqcup_\phi N$. This completes the proof.
\end{proof}

\bibliography{bib}
\bibliographystyle{amsplain}
\end{document}